\newtheorem{theorem}{Theorem}[section]
\newtheorem{proposition}[theorem]{Proposition}
\newtheorem{lemma}[theorem]{Lemma}
\newtheorem{corollary}[theorem]{Corollary}
\newtheorem{remark}{Remark}[section]
\newtheorem{theo}{Theorem}
\theoremstyle{definition}
\newtheorem{definition}[theorem]{Definition}
\newcommand{\R}{\mathbb{R}}
\newcommand{\C}{\mathbb{C}}
\newcommand{\N}{\mathbb{N}}
\newcommand{\Z}{\mathbb{Z}}
\newcommand{\T}{\mathbb{T}}
\renewcommand{\S}{\mathbb{S}}
\newcommand{\lap}{\bigtriangleup}
\newcommand{\E}{\mathbb E}
\renewcommand{\Re}{\mbox{Re}}
\newcommand{\mE}{\mathcal E}
\newcommand{\an}[1]{\langle #1 \rangle}
\newcommand{\parmi}[2]{\begin{pmatrix}#2 \\ #1\end{pmatrix}}
\newcommand{\grad}{\bigtriangledown}
\renewcommand{\L}{\mathcal L}
\renewcommand{\Im}{\textrm{Im}}
\title{An equation on random variables and systems of fermions}
\author{Anne-Sophie de Suzzoni\footnote{Universit\'e Paris 13, Sorbonne Paris Cit\'e, LAGA, CNRS ( UMR 7539), 99, avenue Jean-Baptiste Cl\'ement, F-93430 Villetaneuse, France - email: adesuzzo@math.univ-paris13.fr}}
\begin{document}

\maketitle

\begin{abstract} In this paper, we consider an equation on random variables which can be reduced to the equation which describes the evolution of systems of fermions. We give some results of well-posedness for this equation on the spheres and torus of dimension 2 and 3  and on the Euclidean space. We give results of scattering and blow-up on the Euclidean depending on if the equation is defocusing or focusing. We interpret the results in terms of the evolution of fermions.
\end{abstract}

\tableofcontents

\section{Motivations}

In this paper, we present an equation on random variables related to systems of fermions. This section is dedicated to presenting this equation and explaining its relation to equations derived from many-body quantum physics.  We consider that, under sufficient assumptions, a system of fermions should behave according to 
$$
i\partial_t \gamma = [-\lap + w* \rho_\gamma, \gamma]
$$
where $\gamma$ is a non negative bounded integral operator with kernel $\gamma(y,x)$, where $\rho_\gamma$ is the multiplication by $\gamma(x,x)$, and $[\cdot,\cdot ]$ is the commutator. The map $w$ may be a Dirac delta. This equation has been studied in \cite{eq1,eq2,eq3,lewsabI,lewsabII,eq4}.

The interest is that the equation on random variable closely resembles the cubic Schr\"odinger equation, and the theory of Schr\"odinger equations only has to be adapted to random variables to provide results, which are eventually turned into properties for the systems of fermions.

In Sections \ref{sec-WPST}, \ref{sec-WPE}, \ref{sec-scatt}, \ref{sec-focus}, we use previously existing techniques about the cubic Schr\"odinger equation and adapt them to random variables. In Section \ref{sec-incidence}, we give and discuss corollaries of the previous sections for systems of fermions.

\subsection{Dynamics of a system of fermions}

Before describing the dynamics of a system of fermions, we start with the better known Bose-Einstein condensate. 

A system of $N$ bosons may be described by a wave function $\Psi(x_1,\hdots, x_N)$. from $\R^{3N}$ to $\C$. It satisfies under certain conditions the Schr\"odinger equation
$$
i\partial_t \Psi = -\sum_{i=1}^N \lap_{x_i} \Psi +\sum_{i\neq j} w_T(x_i-x_j) \Psi
$$
where $\lap_{x_i}$ is the laplacian with respect to the variable $x_i$ and is related to the kinetic energy, and $w_T$ is related to the interaction between particles and depends on the temperature $T$.

When one lowers the temperature and takes a large number of particles, the system becomes a Bose-Einstein condensate, and under a mean-field approximation, one writes $\Psi(x_1,\hdots , x_N) = \prod_j u(x_j)$ with $u$ satisfying an equation of the form :
$$
i\partial_t u =-\lap u +w* |u|^2 u.
$$
This approximation is motivated by the fact that bosons are exchangeable particles, in the sense that $\Psi$ is symmetric, that is 
$$
\Psi(x_{\sigma(1)},\hdots, x_{\sigma(N)}) = \Psi(x_1,\hdots, x_N)
$$
for all permutations $\sigma$. The derivation of Bose-Einstein dynamics from many-body quantum mechanics is a vast subject in the literature, see for instance \cite{derbos5,derbos6,derbos8,derbos3,derbos1,derbos2,derbos4,derbos7}.

Let us now consider a system of fermions. It is described by a wave function $\Psi$ satisfying the same kind of dynamics as a system of bosons. But since we are dealing with fermions, $\Psi$ is anti-symmetric, that is
$$
\Psi(x_{\sigma(1)},\hdots, x_{\sigma(N)}) = \varepsilon(\sigma)\Psi(x_1,\hdots, x_N)
$$
where $\varepsilon (\sigma)$ is the signature of the permutation $\sigma$. This is the Pauli principle. If one writes 
$$
\Psi(x_1,\hdots, x_N)= \frac1{\sqrt{n!}}\sum_{\sigma } \varepsilon(\sigma) \prod_{j=1}^N u_{\sigma (j)}(x_j)
$$
where $u_j$ are orthonormal functions, then the dynamics of $\Psi$ may be approached, under a mean-field approximation, by the Hartree-Fock equation :
$$
\forall j=1,\hdots, N \; , \; i\partial_t u_j = -\lap u_j + w*(\sum_k |u_k|^2) u_j.
$$
Note that $\int \overline{u_k}u_j$ is a conserved quantity for this equation and hence the orthonormality is preserved under the flow. The derivation of the Hartree-Fock equation from many-body quantum mechanics may be found in \cite{derfer1,derfer2,derfer3,derfer4,derfer5}.

Writing $\gamma = \sum_k |u_k \times u_k|$, where $|f\times g|$ is the operator such that
$$
|f\times g|(v) (x) = \int \overline{g(y)}v(y) dy f(x),
$$
we get that $\gamma$ satisfies 
$$
i\partial_t \gamma  = [-\lap + w* \rho_\gamma , \gamma]
$$
where $[\cdot , \cdot]$ is the commutator and $\rho_\gamma$ is the diagonal of the integral kernel of $\gamma$, here $\rho_\gamma = \sum |u_k|^2$. We note that the number of particles $N$ is equal to the trace of $\gamma$. One may consider this equation on self-adjoint integral operators $\gamma$ such that $0\leq \gamma \leq 1$. These are called density operators. One can then consider a more general setting for the systems of fermions. For instance, by not restricting $\gamma$ to be a trace-class operator, one can consider infinite systems of particles. The stability of non-trace class stationary solutions is the subject of \cite{lewsabI,lewsabII}, which inspired this paper.

\subsection{Comparison with density operators}

We present here the equation on random variables and explain how it is related to what has been said before.

We consider the equation on random variables : 
\begin{equation}\label{eqonrv}
i\partial_t X = - \lap X + \E (|X|^2) X
\end{equation}
on a probability space $(\Omega, \mathcal A ,P)$. We assume that $X$ has values in $L^2_{\textrm{loc}}(M)$ where $M$ is either $\S^d$, $\T^d$ or $\R^d$.

We write 
$$
\an{f,g} = \int_{M} \overline{f(x)}g(x)dx.
$$

\begin{proposition}\label{XtoG} Let $\gamma$ be the operator defined as 
$$
\gamma  = \int |X(\omega) \times X(\omega)| dP(\omega)
$$
that is 
$$
\gamma (v) = \E (\an{X,v} X).
$$
Let $\rho_\gamma$ be the diagonal of the integral kernel of $\gamma$. Then, $\gamma$ solves the equation:
\begin{equation}\label{eqonop}
i\partial_t \gamma = [-\lap + \rho_\gamma, \gamma] .
\end{equation}
\end{proposition}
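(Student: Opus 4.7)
The plan is to derive \eqref{eqonop} by unpacking the definition of $\gamma$ at the level of integral kernels, differentiating in time under the expectation, and invoking equation \eqref{eqonrv} together with its complex conjugate. Starting from
$$
\gamma(v)(x)=\E\bigl(\an{X,v}X(x)\bigr)=\int_M \E\bigl(X(x)\overline{X(y)}\bigr)\,v(y)\,dy,
$$
one reads off that the integral kernel of $\gamma$ is $\gamma(x,y)=\E(X(x)\overline{X(y)})$, and hence $\rho_\gamma(x)=\E(|X(x)|^2)$. In particular $\rho_\gamma$ is deterministic, real and nonnegative, so it matches the nonlinearity appearing in \eqref{eqonrv} and can be freely moved in and out of expectations and complex conjugations; this is the algebraic bridge between the two equations.

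Assuming enough regularity on $X$ to exchange $\partial_t$ and $\E$, the product rule applied to $\gamma(v)=\E(\an{X,v}X)$ gives
$$
i\partial_t\gamma(v)=\E\bigl(i\an{\partial_t X,v}\,X\bigr)+\E\bigl(\an{X,v}\,i\partial_t X\bigr).
$$
In the second term I substitute $i\partial_t X=-\lap X+\rho_\gamma X$ from \eqref{eqonrv} and pull the deterministic operator $-\lap+\rho_\gamma$, acting in the $x$ variable, out of the expectation, which yields $(-\lap+\rho_\gamma)\gamma(v)$. For the first term, conjugating \eqref{eqonrv} and using that $\rho_\gamma$ is real gives $i\overline{\partial_t X}=\lap\overline{X}-\rho_\gamma\overline{X}$, and the self-adjointness of $\lap$ on $M=\S^d$, $\T^d$ or $\R^d$ (applied against a smooth test $v$, then closed by density) yields
$$
i\an{\partial_t X,v}=\an{X,\lap v}-\an{X,\rho_\gamma v}=-\an{X,(-\lap+\rho_\gamma)v},
$$
so taking expectations gives $-\gamma\bigl((-\lap+\rho_\gamma)v\bigr)$.

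Putting the two pieces together, I obtain
$$
i\partial_t\gamma(v)=(-\lap+\rho_\gamma)\gamma(v)-\gamma\bigl((-\lap+\rho_\gamma)v\bigr)=[-\lap+\rho_\gamma,\gamma](v),
$$
which is exactly \eqref{eqonop}. The identity is essentially algebraic; the only nontrivial points are justifying the interchange of $\partial_t$ and $\E$ and the integration by parts against $v$, which I expect to constitute the main, but mild, technical obstacle and which can be handled by working first with smooth $X$ and compactly supported test functions $v$, then closing by density in the functional setting set up in Sections \ref{sec-WPST}--\ref{sec-WPE}.
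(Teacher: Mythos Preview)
Your proof is correct and follows essentially the same route as the paper: differentiate $\gamma(v)=\E(\an{X,v}X)$ by the product rule, use self-adjointness of $-\lap+\E(|X|^2)$ to shift the operator onto $v$ in the first term, and pull the deterministic operator through the expectation in the second term. The only cosmetic difference is that you identify $\rho_\gamma=\E(|X|^2)$ at the outset rather than at the end, and you are slightly more explicit about the formal justifications (interchange of $\partial_t$ with $\E$, integration by parts) than the paper is.
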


\begin{remark} This is the equation one can find in \cite{lewsabI,lewsabII} in the case $\omega = \delta$. 
\end{remark}

\begin{proof} Let $v$ in the domain of definition of $\gamma$ and let us differentiate $\gamma (v)$. We have
$$
i\partial_t \gamma(v) = \E (\an{-i\partial_t X, v} X) + \E(\an{X,v} i\partial_t X)
$$
and by replacing $i\partial_t X$ by its value, we get
$$
i\partial_t \gamma(v) = \E (\an{ \lap X - \E (|X|^2) X, v} X) + \E(\an{X,v} (- \lap X + \E (|X|^2) X)).
$$
Because $\lap$ and the multiplication by $\E(|X|^2)$ are self-adjoint, we get
$$
\E (\an{ \lap X - \E (|X|^2) X, v} X) = \E (\an{  X, (\lap - \E(|X|^2)v} X) = \gamma((\lap - \E(|X|^2))v).
$$
As $\an{X,v} $ depends only on the probability variable, we have 
$$
\an{X,v} (- \lap X + \E (|X|^2) X)= (-\lap + \E(|X|^2))\Big( \an{X,v} X)\Big)
$$
and since $-\lap + \E(|X|^2)$ does not act on the random variable, 
$$
\E \Big (-\lap + \E(|X|^2))\Big( \an{X,v} X) \Big)\Big)= (-\lap + \E(|X|^2))\E (\an{X,v}X) = (-\lap + \E(|X|^2))(\gamma (v))
$$
therefore
$$
i\partial_t \gamma(v) = [-\lap + \E(|X|^2), \gamma].
$$
What is more, the integral kernel of $\gamma$ is $\E(\overline X(y) X(x))$ and hence $\rho_\gamma (x) = \E(|X(x)|^2)$ which gives the result.
\end{proof}

This proposition explains how one goes from a solution of \eqref{eqonrv} to a solution of \eqref{eqonop}. The following proposition explains how to pass from an initial datum for \eqref{eqonop} to an initial datum for \eqref{eqonrv}. Combining these two propositions and a global well-posedness property for \eqref{eqonrv}, we get global existence for \eqref{eqonop}. Indeed, from an initial datum for \eqref{eqonop}, we get an initial datum for \eqref{eqonrv}, which gives a global solution to \eqref{eqonrv}, which is turned into a solution to \eqref{eqonop}..

\begin{proposition}\label{prop-CI}Let $s\geq 0$. Let $\gamma_0 $ be a non negative trace class operator on $L^2(M)$ such that 
$$
\textrm{Tr} ((1-\lap)^s\gamma_0) < \infty.
$$
There exists a probability space $(\Omega, \mathcal A,P)$ and a random variable on this space $X_0$ such that $X_0 \in L^2(\Omega, H^s(M))$ and for all $v \in L^2(M)$
$$
\E(\an{X_0,v}X_0) = \gamma_0 (v).
$$
\end{proposition}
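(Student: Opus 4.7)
The plan is to diagonalise $\gamma_0$ and construct $X_0$ as a random series over its eigenbasis with i.i.d.\ complex Gaussian coefficients; this is essentially the inverse of the map $X\mapsto\gamma$ from Proposition~\ref{XtoG}. Since $\gamma_0$ is non-negative and trace class, the spectral theorem provides an orthonormal family $(u_k)\subset L^2(M)$ and reals $\lambda_k\geq 0$ with $\sum_k \lambda_k=\textrm{Tr}(\gamma_0)<\infty$ such that $\gamma_0(v)=\sum_k \lambda_k\,\an{u_k,v}\,u_k$ for every $v\in L^2(M)$. The hypothesis $\textrm{Tr}((1-\lap)^s\gamma_0)<\infty$ reads on this basis as $\sum_k \lambda_k\,\|u_k\|_{H^s}^2<\infty$; in particular each $u_k$ with $\lambda_k>0$ lies in $H^s(M)$.

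Next, I equip $(\Omega,\mathcal{A},P)$ with a sequence $(g_k)$ of i.i.d.\ standard complex Gaussians, so that $\E(g_k)=0$ and $\E(\overline{g_k}\,g_\ell)=\delta_{k\ell}$, and set
$$
X_0^N=\sum_{k=1}^N \sqrt{\lambda_k}\,g_k\,u_k.
$$
Independence together with $\E|g_k|^2=1$ gives
$$
\E\|X_0^N-X_0^M\|_{H^s}^2=\sum_{k=M+1}^N \lambda_k\,\|u_k\|_{H^s}^2,
$$
which tends to zero as $M,N\to\infty$. Hence $(X_0^N)$ is Cauchy in $L^2(\Omega,H^s(M))$ and converges to some $X_0$ satisfying $\E\|X_0\|_{H^s}^2=\textrm{Tr}((1-\lap)^s\gamma_0)<\infty$.

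It remains to check the covariance identity. Computing on the partial sums using $\E(\overline{g_k}\,g_\ell)=\delta_{k\ell}$,
$$
\E(\an{X_0^N,v}\,X_0^N)=\sum_{k,\ell=1}^N \sqrt{\lambda_k\lambda_\ell}\,\E(\overline{g_k}\,g_\ell)\,\an{u_k,v}\,u_\ell=\sum_{k=1}^N \lambda_k\,\an{u_k,v}\,u_k,
$$
and the right-hand side converges to $\gamma_0(v)$ in $L^2(M)$. On the other hand the bilinear map $(X,Y)\mapsto \E(\an{X,v}\,Y)$ is continuous on $L^2(\Omega,L^2(M))$ by Cauchy--Schwarz, so the left-hand side also converges to $\E(\an{X_0,v}\,X_0)$; identifying the two limits yields the desired equality. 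The only mildly delicate point is the convergence of the random series in $L^2(\Omega,H^s(M))$, which is however immediate from the trace-class regularity assumption combined with the orthogonality of the Gaussians; no substantial obstacle arises.
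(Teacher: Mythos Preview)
Your proof is correct and follows essentially the same approach as the paper: diagonalise $\gamma_0$ via the spectral theorem, build $X_0$ as a Gaussian random series over the eigenbasis, and verify the covariance identity using $\E(\overline{g_k}g_\ell)=\delta_{k\ell}$. The only cosmetic difference is that you handle convergence in $L^2(\Omega,H^s)$ via an explicit Cauchy argument, while the paper computes $\|X_0\|_{L^2(\Omega,H^s)}^2$ directly as $\textrm{Tr}(\gamma_0(1-\lap)^s)$ through a trace manipulation; both arrive at the same quantity $\sum_k \lambda_k\|u_k\|_{H^s}^2$.
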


\begin{proof} As $\gamma_0$ is trace class and non-negative, there exists a sequence of non-negative numbers $(\alpha_n)_{n\in \N}$ and an orthonormal family of $L^2(M)$, $(e_n)_{n\in\N}$ such that
$$
\gamma_0 = \sum_{n\in \N} \alpha_n |e_n \times e_n|
$$
where $|e_n \times e_n|$ is the projection on $\C e_n$.

Let $(g_n)_{n\in N}$ be a sequence of complex centred normalised independent Gaussian variables. Set
$$
X_0 = \sum_{n\in \N} \sqrt{\alpha_n} g_n e_n.
$$

Let $v\in L^2(M)$. We have
$$
\E(\an{X_0,v}X_0) = \sum_{k,l} \sqrt{ \alpha_k \alpha_l}\an{e_k,v} e_l \E(\overline g_k g_l)
$$
and since $\E(\overline g_k g_l) = \delta_k^l$ where $\delta_k^l$ is the Kronecker symbol, we get
$$
\E(\an{X_0,v}X_0) = \sum_{k}  \alpha_k \an{e_k,v} e_k = \gamma_0(v).
$$

Besides, we have by definition
$$
\|X_0\|_{L^2(\Omega, H^s)}^2= \E(\an{X_0,(1-\lap)^sX_0})
$$
and since $\textrm{Tr}(AB) = \textrm{Tr}(BA)$,
$$
\|X_0\|_{L^2(\Omega, H^s)}^2= \E\Big(\textrm{Tr}(|X_0\times X_0| (1-\lap)^s)\Big)
$$
and by linearity of the trace and definition of $X_0$,
$$
\|X_0\|_{L^2(\Omega, H^s)}^2= \textrm{Tr}(\E(|X_0\times X_0|) (1-\lap)^s) = \textrm{Tr}(\gamma_0 (1-\lap)^s).
$$
\end{proof}

\begin{remark}
More generally, if $\gamma_0$ is a non-negative operator and $X_0$  is the Gaussian random field (see \cite{simonPphi2}) with covariance operator $\gamma_0$ then $\gamma_{X_0} = \gamma_0$. \end{remark}

\subsection{Equilibria}

The equation \eqref{eqonop} has stationary states on $\R^d$, $\T^d$, and $\S^d$, or even on sufficiently symmetric spaces. By sufficiently symmetric spaces, we mean any manifold $M$ such that there exists a transitive action of a group on $M$ that leaves $M$ invariant. 

On $\R^d$ and $\T^d$, all Fourier multipliers may be considered. Indeed, they commute with the Laplacian and their integral kernel is a function of $x-y$, making their diagonals constants, hence commuting with any operator.

On $\S^d$, one may consider functions of the Laplace-Beltrami operator. These operators commute with the Laplace-Beltrami operator and the diagonal of their kernels is also a constant. This is due to spherical symmetry and is explained later.

In this subsection, we present random variables related to these stationary states. What we obtain from this parallel are not stationary states but states whose laws are invariant under the flow of \eqref{eqonrv}.

\paragraph{On the sphere $\S^d$}	

For $n\in \N^*$, let $(e_{n,k})_{1\leq k\leq N_n}$ be a $L^2$ basis of spherical harmonics of degree $n$, that is, $e_{n,k}$ satisfies
$$
-\lap_{\S^d} e_{n,k} = n(n+d-1) e_{n,k} = \lambda_n e_{n,k}
$$
for all $k = 1,\hdots, N_n$. The number $N_n$ is the dimension of the spherical harmonics of degree $n$, it is equal to
$$
N_n = \parmi{d}{n+d} - \parmi{d}{n+d-2} \sim \frac2{(d-1)!}n^{d-1}.
$$

Let $(a_n)_{n\geq 1}$ be a sequence of complex numbers satisfying
$$
\sum_{n\geq 1} n^{d+1} |a_n|^2 < \infty.
$$

Let $(g_{n,k})_{n,k}$ be a sequence of independent complex Gaussian variables of law $\mathcal N(0,1)$.

We set
$$
Y_0 = \sum_{n,k} g_{n,k} a_n e_{n,k} \mbox{ and } m= \frac1{\textrm{vol}(\S^d)}\sum_{n>0} N_n |a_n|^2
$$
and finally
$$
Y(t) = \sum_{n,k}  g_{n,k} a_n e^{-it(\lambda_n + m)}e_{n,k}.
$$

\begin{proposition}\label{prop-invar} The random variable $Y(t)$ satisfies \eqref{eqonrv} and its law does not depend on $t$. Besides $Y$ belongs to $L^2(\Omega, H^1(\S^d))$.\end{proposition}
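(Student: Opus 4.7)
The plan is to compute $\E(|Y(t)|^2)$ explicitly, observe that it reduces to the constant $m$ so that \eqref{eqonrv} becomes the diagonalisable linear equation $i\partial_t Y = -\lap Y + mY$, and then check each of the three claims separately.

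First I would expand $|Y(t)|^2$ termwise. Since the $g_{n,k}$ are independent centred normalised complex Gaussians, $\E(\overline{g_{n,k}} g_{n',k'}) = \delta_{(n,k),(n',k')}$, so every cross-term vanishes and the time-dependent phases cancel on the diagonal, leaving
$$
\E(|Y(t)|^2)(x) = \sum_{n\geq 1} |a_n|^2 \sum_{k=1}^{N_n} |e_{n,k}(x)|^2.
$$
The essential (and really only non-routine) step is the identity $\sum_k |e_{n,k}(x)|^2 = N_n/\textrm{vol}(\S^d)$. This is the addition formula for spherical harmonics and follows from rotation invariance: for any $R\in SO(d+1)$, $(e_{n,k}\circ R)_k$ is also an orthonormal basis of the $\lambda_n$-eigenspace, so the diagonal of the projector onto that eigenspace is invariant under the transitive action of $SO(d+1)$ on $\S^d$, hence constant; the value $N_n/\textrm{vol}(\S^d)$ is fixed by integrating against $dx$. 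Summing over $n$ yields $\E(|Y(t)|^2) \equiv m$.

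With this in hand, termwise differentiation of the series for $Y(t)$ gives
$$
i\partial_t Y(t) = \sum_{n,k} (\lambda_n + m)\, g_{n,k} a_n e^{-it(\lambda_n+m)} e_{n,k} = -\lap Y(t) + m Y(t) = -\lap Y(t) + \E(|Y(t)|^2) Y(t),
$$
using $-\lap_{\S^d} e_{n,k} = \lambda_n e_{n,k}$. For the $t$-independence of the law, I would invoke the rotational invariance of the standard complex Gaussian: multiplication by $e^{i\theta}$ preserves the law, and by independence $(e^{-it(\lambda_n+m)} g_{n,k})_{n,k}$ has the same joint law as $(g_{n,k})_{n,k}$, which transfers to equality in law of $Y(t)$ and $Y_0$ as $H^1$-valued random variables (both being obtained by applying the same deterministic continuous map to the two sequences). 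Regularity is immediate: using orthonormality of the $e_{n,k}$ in $L^2(\S^d)$ and of the $g_{n,k}$ in $L^2(\Omega)$,
$$
\|Y(t)\|_{L^2(\Omega,H^1(\S^d))}^2 = \sum_{n\geq 1} N_n (1+\lambda_n) |a_n|^2 \leq C \sum_{n\geq 1} n^{d+1}|a_n|^2 < \infty,
$$
since $N_n(1+\lambda_n) \leq C n^{d+1}$; this also justifies the termwise differentiation above. The main obstacle, and the step that uses specific geometry of the sphere rather than generic probability or spectral theory, is the addition-formula identity $\sum_k |e_{n,k}(x)|^2 = N_n/\textrm{vol}(\S^d)$; everything else is a standard property of complex Gaussians or a convergent term-by-term manipulation.
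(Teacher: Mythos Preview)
Your proof is correct and follows essentially the same route as the paper: both hinge on the addition formula $\sum_k |e_{n,k}(x)|^2 = N_n/\textrm{vol}(\S^d)$ (which the paper isolates as a separate lemma, proved exactly via the rotation-invariance argument you sketch), then compute $\E(|Y|^2)=m$, verify the equation by termwise differentiation, invoke rotational invariance of complex Gaussians for the law, and check the $H^1$ norm by the same summability condition. Your version is slightly more explicit about justifying the termwise differentiation and the transfer of equality in law to the $H^1$-valued variable, but the structure and key ideas are identical.
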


\begin{remark} Even though $Y$ is not a stationary solution, this makes $Y$ a natural invariant or equilibrium for \eqref{eqonrv}. \end{remark}

To prove this proposition, we need the following lemma.

\begin{lemma}[\cite{SteinWeiss},Lemma 3.1 in \cite{burlebInj}] The quantity
$$
K_n(x) = \sum_{k=1}^{N_n} |e_{n,k}(x)|^2
$$
does not depend on $x$ and is equal to
$$
\frac{N_n}{\textrm{vol}(\S^d)}.
$$
\end{lemma}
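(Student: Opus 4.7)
The plan is to exploit the rotational symmetry of $\S^d$, namely the transitive action of $O(d+1)$, combined with a basis-independence argument.

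First I would establish that the quantity $K_n(x)$ is actually independent of the choice of orthonormal basis of the space $\mathcal H_n$ of spherical harmonics of degree $n$. If $(e_{n,k})$ and $(f_{n,k})$ are two such bases, related by a unitary matrix $U$ via $f_{n,k} = \sum_j U_{kj} e_{n,j}$, then expanding $\sum_k |f_{n,k}(x)|^2$ and using $\sum_k U_{kj}\overline{U_{kl}} = \delta_{jl}$ recovers $\sum_j |e_{n,j}(x)|^2$. So $K_n$ depends only on $\mathcal H_n$, not on the choice of basis.

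Next I would use this to show $K_n$ is constant on $\S^d$. For any $R \in O(d+1)$, the space $\mathcal H_n$ is invariant under $f \mapsto f\circ R$ (since this transformation commutes with $-\lap_{\S^d}$), and the transformation is unitary on $L^2(\S^d)$. Hence $(e_{n,k}\circ R)_k$ is another orthonormal basis of $\mathcal H_n$, and by the previous step
\[
K_n(Rx) = \sum_k |e_{n,k}(Rx)|^2 = \sum_k |(e_{n,k}\circ R)(x)|^2 = K_n(x).
\]
Since $O(d+1)$ acts transitively on $\S^d$, $K_n$ is constant.

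To identify the constant, I integrate over the sphere:
\[
\int_{\S^d} K_n(x)\,dx = \sum_{k=1}^{N_n} \int_{\S^d} |e_{n,k}(x)|^2\,dx = \sum_{k=1}^{N_n} 1 = N_n,
\]
using that the $e_{n,k}$ are $L^2$-normalized. Dividing by $\text{vol}(\S^d)$ gives the claimed value. There is no real obstacle here beyond making the basis-independence step precise; all the work is packaged into the transitivity of the rotation group and the fact that the Laplace–Beltrami operator commutes with rotations.
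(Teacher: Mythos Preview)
Your proof is correct and follows essentially the same route as the paper: both arguments show $K_n$ is basis-independent (the paper phrases this as $\tilde K_n(x,y)=\sum_k e_{n,k}(x)\overline{e_{n,k}(y)}$ being the kernel of the orthogonal projection onto $\mathcal H_n$, you do it via an explicit unitary change of basis), then use that $(e_{n,k}\circ R)_k$ is another orthonormal basis to get rotation invariance, conclude by transitivity of $O(d+1)$ that $K_n$ is constant, and finally integrate to identify the constant. The two presentations are equivalent.
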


As this lemma is crucial for the invariance of $Y$, we give some elements of its proof.

\begin{proof}Let 
$$
\tilde K_n(x,y) = \sum_{k=1}^{N_n}e_{n,k}(x)\overline{e_{n,k}(y)}
$$
be the integral kernel of the orthogonal projection on the spherical harmonics of degree $n$. Because the sphere is invariant under rotations, we have for every rotation $R$ that $(e_{n,k}\circ R)_{1\leq k\leq N_n}$ is also a $L^2$ orthonormal basis of the spherical harmonics of degree $n$. Hence, $\tilde K_n(Rx,Ry)$ is also the integral kernel of the orthogonal projection on the spherical harmonics of degree $n$. Thus, for all rotations $R$ and all $x\in \S^d$ 
$$
K_n(Rx) = \tilde K_n(Rx,Rx) = \tilde K_n(x,x) = K_n(x).
$$
Let $x_0 \in \S^d$. For all $x\in \S^d$, there exists a rotation $R$ such that $x = Rx_0$, hence
$$
K_n(x) = K_n(Rx_0) = K_n(x_0)
$$
and $K_n(x)$ does not depend on $x$.

Finally
$$
K_n(x_0) = \frac1{\textrm{vol}(\S^d)}\int_{\S^d} K_n(x_0) dx =  \frac1{\textrm{vol}(\S^d)}\int_{\S^d} K_n(x) dx .
$$
And given the definition of $K_n$ and the fact that $(e_{n,k})_{1\leq k\leq N_n}$ is an orthonormal basis, we have 
$$
K_n(x_0) = \frac1{\textrm{vol}(\S^d)}\int_{\S^d} \sum_{k=1}^{N_n}|e_{n,k}(x)|^2 dx =\frac{N_n}{\textrm{vol}(\S^d)}
$$
which concludes the proof.\end{proof}

\begin{proof}[Proof of Proposition \ref{prop-invar}.]

Let us compute $\E(|Y(x)|^2)$. Because of the independence of the Gaussian variables, we have
$$
\E(|Y(x)|^2) = \sum_{n,k} |a_n|^2 |e_{n,k}(x)|^2.
$$
We use the lemma to get
$$
\E(|Y(x)|^2) = \sum_n |a_n|^2 K_n(x) = \sum_n |a_n|^2\frac{N_n}{\textrm{vol}(\S^d)}=m.
$$

We differentiate $Y$. We get
$$
i\partial_t Y = \sum_{n,k} a_n g_{n,k} e^{-it(\lambda_n +m)}(\lambda_n +m) e_{n,k}
$$
and since $\lambda_n$ are the eigenvalues of $\lap_{\S^d}$ and $m = \E(|Y(t,x)|^2)$, 
$$
i\partial_t Y = (-\lap_{\S^d} +m) Y = (-\lap_{\S^d} +\E(|Y(x)|^2)) Y .
$$
Therefore, $Y$ solves \eqref{eqonrv}.

The fact that the law of $Y$ does not depend on $t$ is due to the invariance of the law of a Gaussian under rotations.

Finally, we have 
$$
\|Y\|_{L^2(\Omega, H^1(\S^d))}^2 = \sum_{n,k} |a_n|^2 \lambda_n  = \sum_n |a_n|^2 \lambda_n N_n 
$$
which converges since $ |a_n|^2 \lambda_n N_n \sim  n^{d+1} |a_n|^2 $ up to a constant.
\end{proof}

\begin{remark} The random variable $Y$ corresponds to $\gamma = f(-\lap_{\S^d})$ with $|a_n|^2 = f(\lambda_n)$. Indeed, for all $v\in L^2(\S^d)$,
$$
\E(\an{Y(t),v}Y(t)) = \sum_{n,k} |a_n|^2 \an{e_{n,k},v} e_{n,k} = f(-\lap)(v).
$$
Note that the operator does not depend on $t$, which makes $f(-\lap_{\S^d})$ a stationary state for \eqref{eqonop}.
\end{remark}

\paragraph{On $\T^d$}

Let $(a_k)_{k\in \Z^d}$ be a sequence of complex numbers such that
$$
\sum_{k\in \Z^d} (1+ |k|^2) |a_k|^2 < \infty
$$
where $|k|^2 = \sum_i k_i^2$ and let $(g_k)$ be a sequence of independent centred normalised and complex Gaussian variables. We set
$$
Y_0(x) = \sum_{k\in \Z^d} a_k g_k e^{ikx}
$$
with $kx = \sum_i k_i x_i$. Let $m = \sum_k |a_k|^2$, and 
$$
Y(t) = \sum_{k\in\Z^d} a_k e^{-it(k^2 + m)}g_k e^{ikx}.
$$

\begin{proposition} The random variable $Y$ is a solution to \eqref{eqonrv} belonging to $L^2(\Omega,H^1(\T^d))$ whose law does not depend on $t$.\end{proposition}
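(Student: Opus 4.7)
The plan is to follow the sphere case proof almost verbatim, with the Fourier basis $(e^{ikx})_{k\in\Z^d}$ replacing the orthonormal basis of spherical harmonics. The role of the identity $K_n(x) = N_n/\textrm{vol}(\S^d)$ is now played by the trivial equality $|e^{ikx}|^2 = 1$, so that the diagonal of the projection onto the $k$-th Fourier mode is constant and no separate lemma is needed.

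Concretely, I would proceed in four steps. First, by independence and centering of the Gaussians $(g_k)$ combined with $|e^{ikx}|=1$, one obtains
\begin{equation*}
\E(|Y(t,x)|^2) = \sum_{k\in\Z^d}|a_k|^2 = m,
\end{equation*}
independent of $t$ and $x$. Second, differentiating termwise (justified by the $H^1$-summability hypothesis below) and using $-\lap e^{ikx} = |k|^2 e^{ikx}$,
\begin{equation*}
i\partial_t Y = \sum_{k\in\Z^d}(|k|^2+m)\,a_k g_k\, e^{-it(|k|^2+m)}e^{ikx} = (-\lap + m)Y = (-\lap + \E(|Y|^2))Y,
\end{equation*}
so that $Y$ solves \eqref{eqonrv}. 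Third, for the invariance of the law, each $e^{-it(|k|^2+m)}$ has modulus one and a complex centred Gaussian is invariant under multiplication by a unimodular constant, so by independence the family $(e^{-it(|k|^2+m)}g_k)_{k\in\Z^d}$ has the same joint law as $(g_k)_k$; hence $Y(t)$ and $Y_0$ have the same law.

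For the regularity, Parseval's identity yields
\begin{equation*}
\|Y\|_{L^2(\Omega, H^1(\T^d))}^2 = c\sum_{k\in\Z^d}(1+|k|^2)|a_k|^2 < \infty
\end{equation*}
for a constant $c$ depending only on the torus normalisation, finite by hypothesis; this summability also justifies the termwise differentiation used above. The main feature of the torus case compared to the sphere case is precisely that there is no real obstacle: the constant-diagonal property, which required the rotational argument on the sphere, is now immediate from $|e^{ikx}|^2=1$.
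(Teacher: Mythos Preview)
Your proof is correct and follows essentially the same route as the paper's own argument: compute $\E(|Y(t,x)|^2)=m$ by independence, verify $i\partial_t Y = (-\lap+m)Y$, invoke rotation invariance of complex Gaussians for the law, and use the summability hypothesis for $L^2(\Omega,H^1)$. Your write-up is in fact more detailed than the paper's (which is quite terse here), and your explicit remark that $|e^{ikx}|^2=1$ plays the role of the spherical identity $K_n(x)=N_n/\textrm{vol}(\S^d)$ is a nice clarification.
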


\begin{proof} We have that thanks to the independence of the $g_k$ that
$$
\E (|Y(t,x)|^2) = m
$$
and 
$$
i\partial_t Y =  (-\lap + m) Y.
$$
The law of $Y$ does not depend on $t$ as the law of Gaussian variable is invariant under rotations. The variable $Y$ belongs to $L^2(\Omega, H^1(\S^d))$ since 
$$
\sum_{k\in \Z^d} (1+ |k|^2) |a_k|^2 < \infty
$$
\end{proof}

\begin{remark} The random variable $Y$ corresponds to the Fourier multiplier $\gamma$ by $|a_k|^2$, that is for all $t$, $\gamma_{Y(t)} = \gamma$, which makes $\gamma$ a stationary state of \eqref{eqonop}.
\end{remark}

\paragraph{On $\R^d$}

Let $W$ be a $d$-dimensional complex centred random Gaussian process such that for all \\ $k=(k_1,\hdots, k_d) \in \R^d$ and $k'=(k'_1,\hdots, k'_d) \in \R^d$, we have
$$
\E(W(k)\overline{W(k')}) = \left \lbrace{\begin{array}{ll}
0 & \textrm{ if there exists } j\in [|1,d|] \textrm{ such that } k_jk_j' <0 \\
\prod_{j=1}^d \min(|k_j|,|k_j'|) & \textrm{ otherwise.}
\end{array}} \right.
$$
In other terms, $\E(dW(k) \overline{dW(k')} = dk dk' \delta(k-k')$ and $W(0) = 0$ where $\delta$ is the Dirac delta in dimension $d$.

For more information about Gaussian processes, we refer to \cite{simonPphi2}.

Let $f\in L^2(\R^d)$ such that $k\mapsto \sqrt{1+|k|^2} f(k)$ belongs to $L^2(\R^d)$ and set $Y_0$ the random variable
$$
Y_0 (x) = \int f(k) e^{inx} dW(k) .
$$
where $k= (k_1,\hdots ,k_d)$ and $kx = \sum_i k_i x_i$. We write $m = \int_{\R^d} |f(k)|^2 dk$ and 
$$
Y(t,x)= \int e^{-i(k^2+m)t} f(k)  e^{ikx}  dW(k) .
$$

\begin{proposition} The random variable $Y(t,x)$ is a solution of \eqref{eqonrv} whose law does not depend on $t$.\end{proposition}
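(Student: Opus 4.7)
The plan is to mimic, step by step, the proofs given for the sphere and the torus, replacing the discrete spectral decomposition by the stochastic integral against $dW$. The three things to establish are: (i) $\E(|Y(t,x)|^2) = m$, (ii) $Y$ satisfies \eqref{eqonrv}, and (iii) the law of $x \mapsto Y(t,x)$ is independent of $t$. The only real novelty compared with the $\T^d$ case is that the Itô-type isometry $\E(dW(k)\overline{dW(k')}) = \delta(k-k')\,dk\,dk'$ replaces the orthogonality relation $\E(\overline g_k g_\ell) = \delta_{k,\ell}$, so I expect the computations to be formally identical.

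First I would compute the one-point function using the isometry:
$$
\E(|Y(t,x)|^2) = \int\int e^{-i(k^2-k'^2)t}f(k)\overline{f(k')}e^{i(k-k')x}\,\E(dW(k)\overline{dW(k')}) = \int |f(k)|^2\,dk = m,
$$
which is independent of $t$ and $x$, matching the constant $m$ in the phase. Next, differentiating under the stochastic integral, which is legitimate because $k \mapsto (k^2+m) f(k)$ remains in $L^2$ on test sets and one can approximate by simple integrands, yields
$$
i\partial_t Y(t,x) = \int (k^2+m)\, e^{-i(k^2+m)t}\, f(k)\, e^{ikx}\,dW(k).
$$
The $k^2$ piece is exactly the stochastic integral representation of $-\lap Y$ (since $-\lap e^{ikx} = k^2 e^{ikx}$ and the integrability assumption $\sqrt{1+|k|^2}f \in L^2$ even gives $Y(t)\in L^2(\Omega,H^1(\R^d))$), while the $m$ piece is $mY = \E(|Y|^2)Y$, so \eqref{eqonrv} holds.

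For (iii), since $Y(t,\cdot)$ is a centred complex Gaussian field, its law is determined by the two-point covariance
$$
\E\bigl(Y(t,x)\overline{Y(t,y)}\bigr) = \int |f(k)|^2 e^{ik(x-y)}\,dk,
$$
in which the phases $e^{-i(k^2+m)t}$ and $e^{+i(k^2+m)t}$ cancel, leaving a $t$-independent expression; the pseudo-covariance $\E(Y(t,x)Y(t,y))$ vanishes thanks to the orthogonality built into the definition of $W$ (the case $k_j k'_j < 0$ together with the structure of complex Gaussians). This gives stationarity of the law.

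The main technical point is the justification of differentiation under the stochastic integral and of the identification of the $k^2$-part with $-\lap Y$; both follow from the $L^2$-isometry, but writing them carefully requires approximating $f$ by compactly supported simple functions and passing to the limit in $L^2(\Omega)$ using the isometry bound $\|Y(t)\|_{L^2(\Omega \times \R^d_{\mathrm{loc}})}^2 \lesssim \|f\|_{L^2}^2$ and similarly for its derivatives with the weight $\sqrt{1+|k|^2}$. Everything else is a transcription of the proof on $\T^d$.
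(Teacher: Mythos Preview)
Your proof is correct and follows essentially the same route as the paper: compute $\E(|Y(t,x)|^2)=m$ via the It\^o isometry, differentiate to obtain $i\partial_t Y=(-\lap+m)Y$, and use the Gaussian structure for time-invariance of the law (the paper phrases this last step as ``Gaussian variables are invariant under rotations'' rather than computing the covariance explicitly, but the two arguments are equivalent). One small slip to correct: your parenthetical claim that $Y(t)\in L^2(\Omega,H^1(\R^d))$ is false, since $\E(|Y(t,x)|^2)=m$ is constant in $x$ and hence $Y$ lies only in $H^1_{\mathrm{loc}}$ (as the paper remarks right after this proposition); this does not affect your argument, which only needs the distributional identity $-\lap e^{ikx}=k^2e^{ikx}$ and the local $L^2$ bounds you invoke at the end.
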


\begin{proof} The random variable $Y$ satisfies 
$$
i\partial_t Y = -\lap Y + m Y.
$$
We have 
\begin{multline*}
\E(|Y(t,x)|^2)= \E \Big( \Big|\int e^{-i(k^2+m)t} f(k)  e^{ikx} dW(k)\Big|^2 \Big) \\
= \int |e^{-i(k^2+m)t} f(k)  e^{ikx}|^2 dk = \int |f(k)|^2 dk = m.
\end{multline*}
Hence $Y$ satisfies \eqref{eqonrv}. The law of $Y$ does not depend on time because Gaussian variables are invariant under rotations.
\end{proof}

\begin{remark} The random variable $Y$ is a natural invariant in the sense of the law for the equation \eqref{eqonrv}. Nevertheless, it is not in $L^2(\R^d)$, or in $H^1(\R^d)$ but in $L^2_{\textrm{loc}}(\R^d)$, and in $H^1_{\textrm{loc}}(\R^d)$. That means that one cannot use the usual arguments of continuity in he initial datum or scattering to prove the stability of $Y$. We comment this lack of localisation in Section \ref{sec-scatt}.\end{remark}

\begin{remark} The operator associated to $Y$ is the Fourier multiplier by $|f(k)|^2$. Indeed,
$$
\E(\an{Y,v}Y) = \E\Big( \int dy v(y) \int \overline{dW(k)} \overline{f(k)} e^{-iky} \int dW(k') f(k') e^{ik'x} \Big)
$$
which yields, since $\E(dW(k) \overline{dW(k')}) = dk \delta(k-k')$,
$$
\E(\an{Y,v}Y) = \int dk e^{ikx} |f(k)|^2 \hat v(k).
$$
\end{remark}

We sum up the parallels we have made in this section in the following table.

\bigskip

\begin{tabular}{|c|c|c|} \hline
 & Operator level & Random variable level \\\hline
Equation & $i\partial_t \gamma = [-\lap + \rho_\gamma, \gamma]$ & $i\partial_t X = -\lap X + \E(|X|^2) X$\\\hline
Solution & $\gamma = \E(|X\times X|)$ & $X$ \\\hline
Initial datum & $\gamma_0$ & Gaussian field with covariance $\gamma_0$ \\\hline
Compact initial datum & $\gamma_0 = \sum_n |\alpha_n|^2 |u_n\times u_n|$ & $X_0 = \sum_n \alpha_n u_n g_n $ with $(g_n)_n$ i.i.d $\mathcal N(0,1)$\\\hline
Possible condition & $\textrm{Tr}(\gamma_0 (1-\lap))<\infty$  & $X_0 \in L^2(\Omega, H^1)$ \\
on the initial datum  & & \\\hline
Equilibrium on $\S^d$ & $\sum_{n,k} |a_n|^2 |e_{n,k}\times e_{n,k}|$ & $\sum_{n,k} a_n e^{-i(\lambda_n +m)t}e_{n,k}g_{n,k} $ \\\hline
Equilibrium on$ \T^d$ & $\widehat{\gamma_0 \varphi}(k) = |f(k)|^2 \hat \varphi (k)$ & $\sum_{k\in \Z^d} f(k) g_k \frac{e^{ikx}}{\sqrt{2\pi}}e^{-i(k^2 +m)t}$ \\\hline
Equilibrium on $\R^d$ & $\widehat{\gamma_0 \varphi}(k) = |f(k)|^2 \hat \varphi (k)$ & $\int_{\R^d} f(k) e^{ikx} e^{-i(k^2+m)t}dW_k$ \\\hline \end{tabular}

\bigskip

Finally, we make one last remark, which is also the main subject of Section \ref{sec-scatt}. In Section \ref{sec-scatt}, we prove that \eqref{eqonrv} scatters when the initial datum is in $H^1(\R^3)$. This may explain why the equilibria are not localised. At least, it explains why they are not in $H^1(\R^3)$. Indeed, if $Y(t)$ is both an equilibrium and in $H^1(\R^3)$. Then, as it scatters it converges to the solution to the linear equation 
$$
i\partial_t X  = -\lap X 
$$
with an initial datum in $H^1(\R^3)$ and because of dispersion in $\R^3$, $Y(t)$ goes to $0$ in some sense as $t$ goes to $\infty$. But it is impossible, unless $Y(t)$ is almost surely $0$, as the law of $Y(t)$ does not depend on $t$. We discuss this in more detail in Section \ref{sec-scatt}.

\subsection{Main results}

Throughout the paper, we give some results derived from the Schr\"odinger equation's theory for the equation on $X$, that is \eqref{eqonrv}, such as global well-posedness in the energy space in $\T^d,\R^d,\S^d$ for $d=2,3$ or scattering in $L^2(\Omega,H^1(\R^3))$ and in the case of the focusing equation, existence of blow-up solutions, but we choose to state here two results of global well-posedness for the equation on $\gamma$, that is \eqref{eqonop}.

\begin{theo}\label{th-th1} Let $M \in \{\S^2,\S^3,\T^2,\T^3\}$. Let $\Sigma$ be the set of non-negative operators $\gamma$ on $M$ such that $\textrm{Tr }((1-\lap)\gamma)<\infty$. Let $d$ be the distance on $\Sigma$ defined in Definition \ref{def-dist}. 

The equation \eqref{eqonop} is well-posed in $\mathcal C(\R,\Sigma)$ in the sense that for all $\gamma_0 \in \Sigma$ there exists a solution of \eqref{eqonop} with initial datum $\gamma_0$ in $\mathcal C(\R,\Sigma)$, this solution is unique in $\mathcal C(\R,\Sigma)$ and the flow thus defined is continuous in the initial datum.
\end{theo}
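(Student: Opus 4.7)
The plan is to lift everything to the random-variable level: Proposition \ref{prop-CI} lets us produce initial data for \eqref{eqonrv} out of initial data for \eqref{eqonop}, and Proposition \ref{XtoG} lets us push solutions back down. Given $\gamma_0 \in \Sigma$, I would apply Proposition \ref{prop-CI} with $s=1$ to obtain a probability space and a Gaussian field $X_0 \in L^2(\Omega, H^1(M))$ with $\E(\an{X_0,v}X_0) = \gamma_0(v)$ and
$$\|X_0\|_{L^2(\Omega, H^1)}^2 = \textrm{Tr}((1-\lap)\gamma_0).$$
The bulk of the preceding sections establishes global well-posedness of \eqref{eqonrv} in $L^2(\Omega, H^1(M))$ for $M \in \{\S^2,\S^3,\T^2,\T^3\}$, by adapting Strichartz and energy methods to the random setting and exploiting conservation of $\E(|X|^2)$ (which effectively turns the cubic nonlinearity into a potential term once the law of $X$ is controlled). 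Feeding $X_0$ to that theory yields $X \in \mathcal{C}(\R, L^2(\Omega, H^1(M)))$, and Proposition \ref{XtoG} then makes $\gamma(t) := \E(|X(t)\times X(t)|)$ a global solution of \eqref{eqonop} starting from $\gamma_0$. Non-negativity is built in, and the Schatten computation at the end of the proof of Proposition \ref{prop-CI} gives
$$\textrm{Tr}((1-\lap)\gamma(t)) = \|X(t)\|_{L^2(\Omega,H^1)}^2,$$
so $\gamma(t) \in \Sigma$ with a quantity that is in fact conserved along the flow.

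For continuous dependence on the initial datum, I would couple two close data $\gamma_0, \tilde\gamma_0$ on a common probability space by building $X_0, \tilde X_0$ from the same Gaussian sequence applied to a common orthonormal basis to which both operators are Schatten-close. The distance $d$ of Definition \ref{def-dist} should be designed precisely so that $d(\gamma_0,\tilde\gamma_0)$ controls $\|X_0 - \tilde X_0\|_{L^2(\Omega, H^1)}$; continuity of the $L^2(\Omega, H^1)$-flow for \eqref{eqonrv}, combined with the bilinear estimate
$$d\bigl(\E(|X\times X|),\, \E(|\tilde X\times \tilde X|)\bigr) \lesssim \bigl(\|X\|_{L^2(\Omega,H^1)} + \|\tilde X\|_{L^2(\Omega,H^1)}\bigr)\|X - \tilde X\|_{L^2(\Omega,H^1)},$$
then transfers back to $(\Sigma, d)$.

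The step I expect to be the main obstacle is uniqueness, since the map $X \mapsto \gamma = \E(|X\times X|)$ is highly non-injective, so a solution $\gamma \in \mathcal{C}(\R,\Sigma)$ of \eqref{eqonop} need not a priori come from the construction above. I would argue directly at the operator level through the Duhamel formulation
$$\gamma(t) = e^{it\lap}\gamma_0 e^{-it\lap} - i\int_0^t e^{i(t-s)\lap}\bigl[\rho_{\gamma(s)}, \gamma(s)\bigr]e^{-i(t-s)\lap}\,ds,$$
taking the difference of two $\mathcal{C}(\R,\Sigma)$ solutions and applying Grönwall in the norm behind $d$. The commutator splits as $[\rho_{\gamma_1} - \rho_{\gamma_2}, \gamma_1] + [\rho_{\gamma_2}, \gamma_1 - \gamma_2]$, and here the low dimension is essential: from $\textrm{Tr}((1-\lap)\gamma) < \infty$ together with the Sobolev embedding $H^1 \hookrightarrow L^6$ in $d=3$ (resp.\ $H^1 \hookrightarrow L^p$ for any $p<\infty$ in $d=2$), one gets $\rho_\gamma$ in a sufficiently good Lebesgue space to close the commutator estimates in the appropriate Schatten norm. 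The precise matching between the Schatten bookkeeping required for this Grönwall argument and the specific distance $d$ of Definition \ref{def-dist} is the piece I expect to require the most care.
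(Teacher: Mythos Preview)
Your existence argument is correct and matches the paper's. The divergence is in uniqueness and, to a lesser extent, continuity.

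For uniqueness, the paper does not work at the operator level via Gr\"onwall. Instead it lifts \emph{back} to random variables: given any solution $\gamma \in \mathcal C(\R,\Sigma)$ of \eqref{eqonop}, set $\varphi = \rho_\gamma$ and solve the \emph{linear} equation $i\partial_t X = (-\lap + \varphi)X$ with Gaussian initial datum of covariance $\gamma_0$ (Lemma~\ref{lem-gau1}). The covariance $\gamma_X$ of $X$ then solves the linear conjugation equation $i\partial_t \gamma_X = [-\lap + \varphi, \gamma_X]$ with the same initial datum as $\gamma$, and uniqueness for that linear equation forces $\gamma_X = \gamma$; hence $\E(|X|^2) = \rho_\gamma = \varphi$, so $X$ actually solves the nonlinear equation \eqref{eqonrv}. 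Two operator solutions with the same $\gamma_0$ thus yield two random-variable solutions with the same initial law, and uniqueness of laws for \eqref{eqonrv} (Lemma~\ref{lem-law}) gives equal covariances. This completely sidesteps the Schatten/Wasserstein matching problem you flag at the end: no commutator estimates in $d$ are ever needed. Your direct approach may well be workable, but the obstacle you identify is real, since $d$ is a Wasserstein distance on Gaussian laws rather than a Schatten norm.

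For continuity, your bilinear estimate is not how the paper proceeds, and such an estimate is not obviously available for the Wasserstein $d$. The missing ingredient is Lemma~\ref{lem-law2}: if $X_0$ is Gaussian then $X(t)$ remains Gaussian, because $X(t) = U(t)X_0$ for the linear propagator $U(t)$ of $i\partial_t u = (-\lap + \E(|X|^2))u$. Once $X_1(t), X_2(t)$ are known to be Gaussian with covariances $\gamma_1(t), \gamma_2(t)$, the definition of $d$ as an infimum over Gaussian couplings gives directly
\[
d(\gamma_1(t),\gamma_2(t)) \le \|X_1(t) - X_2(t)\|_{L^2(\Omega,H^1)},
\]
with no bilinear factor, and one then takes the infimum over couplings of the initial data.
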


\begin{theo}\label{th-th2} Let $M\in \R^2,\R^3$. Let $f$ be a bounded map on $M$ such that $\an k |f(k)| \in L^2$. Let $\gamma_f$ be the Fourier multiplier by $|f|^2$. Let $\gamma_f^{1/2}$ be the Fourier multiplier by $f$. Let $\Sigma_f$ be the set of non-negative operators $\gamma$ on $M$ such that there exists a square root of $\gamma$, $\gamma^{1/2}$ such that $Q = \gamma^{1/2} - \gamma_f^{1/2}$ satisfies $\textrm{Tr }(Q^*(1-\lap)Q) <\infty$. The set $\Sigma_f$ with the distance $d$ is a well-defined metric space. 

The equation \eqref{eqonop} is well-posed in $\mathcal C(\R,\Sigma_f)$ in the sense that for all $\gamma_0 \in \Sigma$ there exists a solution of \eqref{eqonop} with initial datum $\gamma_0$ in $\mathcal C(\R,\Sigma_f)$, this solution is unique in $\mathcal C(\R,\Sigma)$ and the flow thus defined is continuous in the initial datum.
\end{theo}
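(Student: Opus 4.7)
The plan is to reduce Theorem \ref{th-th2} to a well-posedness statement for the random-variable equation \eqref{eqonrv}, perturbed around the delocalised Gaussian equilibrium $Y$ constructed on $\R^d$ in the preceding subsection. Given $\gamma_0 \in \Sigma_f$ with a square root $\gamma_0^{1/2} = \gamma_f^{1/2} + Q_0$ and $\textrm{Tr}(Q_0^*(1-\lap)Q_0) < \infty$, I would mimic Proposition \ref{prop-CI}: fix an orthonormal basis $(\xi_n)$ of $L^2(\R^d)$ and i.i.d.\ normalised complex Gaussians $(g_n)$, and set $X_0 = Y_0 + Z_0$ with $Y_0 = \sum_n \gamma_f^{1/2}(\xi_n) g_n$ and $Z_0 = \sum_n Q_0(\xi_n) g_n$. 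By the same computation as in Proposition \ref{prop-CI}, one gets $\E(|X_0 \times X_0|) = \gamma_0$, the field $Y_0$ is exactly the equilibrium Gaussian field with covariance $\gamma_f$, and $\|Z_0\|_{L^2(\Omega, H^1)}^2 = \textrm{Tr}(Q_0^*(1-\lap)Q_0)$. The metric on $\Sigma_f$ is inherited from the $H^1$-Hilbert--Schmidt norm of $Q_0$ once one quotients out the partial-isometry ambiguity of the square root; verifying that this prescription is well-defined and separated is the preliminary lemma ensuring that $\Sigma_f$ is a metric space.

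Next, I would look for the solution in the ansatz $X(t) = Y(t) + Z(t)$ where $Y(t)$ is the explicit equilibrium of the preceding subsection and $Z(0) = Z_0$. Using $\E(|Y(t,x)|^2) = m$ and $i\partial_t Y = (-\lap + m) Y$, subtracting the $Y$-equation from \eqref{eqonrv} yields
$$
i\partial_t Z = -\lap Z + m Z + \bigl(\E(|Z|^2) + 2\re \E(\overline{Y} Z)\bigr)(Y+Z),
$$
and the $mZ$ term can be removed by the gauge change $Z \mapsto e^{imt} Z$ (which simultaneously sends $Y$ to a free Schr\"odinger evolution $\tilde Y$). Theorem \ref{th-th2} then reduces to global well-posedness of this equation in $\mathcal C(\R, L^2(\Omega, H^1(\R^d)))$; once $Z$ is produced, one recovers $\gamma(t) = \E(|(Y(t)+Z(t)) \times (Y(t)+Z(t))|)$ and verifies via Proposition \ref{XtoG} that the result lies in $\mathcal C(\R, \Sigma_f)$ and solves \eqref{eqonop}.

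Local well-posedness for $Z$ I would derive by a contraction argument based on Strichartz estimates lifted to $L^2(\Omega)$-valued spaces. The cubic nonlinearity is $H^1$-subcritical for $d = 2, 3$, so the pure-$Z$ trilinear term is treated by the standard Kato--Cazenave scheme. For globalisation, the candidate conserved quantity is a Bogoliubov-type energy combining $\E\an{Z,(1-\lap)Z}$ with a positive quadratic form in $\E(|Z|^2) + 2\re\E(\overline Y Z)$; in the defocusing configuration of \eqref{eqonrv}, this functional is non-negative and controls $\|Z\|_{L^2(\Omega, H^1)}$, yielding an a priori bound and hence global existence. Uniqueness and continuity in the initial datum follow from the same contraction estimates used locally.

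The principal obstacle is the bilinear and trilinear estimates controlling the cross terms between $Y$ and $Z$. Because $Y$ only sits in $H^1_{\textrm{loc}}(\R^d)$, one cannot place products $Y Z$ directly into Strichartz spaces; the saving point is that every $Y$-factor in the nonlinearity is paired, through an expectation, with either another $Y$ (producing the constant $m$) or a $Z$ (producing a deterministic function of $(t,x)$ which lies in $L^2_x$ by Cauchy--Schwarz in $\omega$). The estimates needed on the resulting linear response operator $Z \mapsto \int_0^t e^{i(t-s)\lap}\bigl(Y(s)\,\E(\overline{Y(s)} Z(s))\bigr)\,ds$ are essentially those of Lewin--Sabin \cite{lewsabI, lewsabII}, reformulated in the random-variable language via Gaussian hypercontractivity and the translation invariance of the law of $Y$; this is where the real technical effort lies and is what restricts the allowed profiles $f$ to those with $\an{k}f \in L^2$.
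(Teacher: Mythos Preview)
Your overall architecture---reduce to the perturbed equation for $Z=X-Y$, prove GWP for $Z$ in $L^2(\Omega,H^1)$, then transfer back to $\gamma$ via Proposition \ref{XtoG}---is exactly the paper's route (Sections \ref{sec-WPE} and 6.2), and your construction of $X_0=Y_0+Z_0$ from a square root of $\gamma_0$ matches Lemma \ref{lem-law22}. Two points, however, diverge from the paper in ways worth noting.

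\textbf{Globalisation.} You propose a conserved Bogoliubov-type energy. The paper does \emph{not} find a conserved quantity for $Z$: the natural renormalised energy $\mE(Y+Z)-\mE(Y)$ contains cross terms such as $\Re\int_{\Omega\times\R^d}\overline{\grad Y}\,\grad Z$ and $m\int_{\R^d}\Re\E(\overline Y Z)$ which are not obviously finite, since $Y\notin L^2$. Instead, Proposition \ref{prop-gwp} builds a modified energy $\mE=A+B+D+2mE$ (with $A,B,D,E$ explicit finite quantities in $Z$) that is \emph{not} conserved but obeys $|\partial_t\mE|\le C(m,m_2)\,\mE$, and closes by Gronwall. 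Your conserved-energy claim would need justification that the paper does not provide and that is not obviously available.

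\textbf{Cross terms with $Y$.} You identify the linear response $Z\mapsto\int_0^t S(t-s)\bigl(Y\,\E(\overline Y Z)\bigr)ds$ as the principal obstacle, requiring Lewin--Sabin type technology. In the paper this term is elementary: because $\|Y(t,x)\|_{L^2(\Omega)}^2=m$ and $\|\grad Y(t,x)\|_{L^2(\Omega)}^2=m_2$ are \emph{constants} in $(t,x)$, Cauchy--Schwarz in $\omega$ gives directly $\|\E(\overline Y Z)Y\|_{L^2(\Omega,H^1)}\lesssim(m+m_2)\|Z\|_{L^2(\Omega,H^1)}$ pointwise in time, and the Duhamel integral contributes a factor $T$ (see the proof of Proposition \ref{prop-lwp}). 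No dispersive estimate on the linear response is needed; the hypothesis $\an k f\in L^2$ enters only to make $m$ and $m_2$ finite. You are making this step considerably harder than it is, while underestimating the energy step.
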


\section{Well-posedness on  \texorpdfstring{$\mathbb S^d$}{Sd} and \texorpdfstring{$\mathbb T^d$}{Td}} \label{sec-WPST}

The goal of this section is to prove global well-posedness results in the energy space.

\subsection{Local well-posedness on \texorpdfstring{$\mathbb S^2$}{S2} and \texorpdfstring{$\mathbb T^2$}{T2}}

In this subsection, we explain why the equation \eqref{eqonrv} is locally well-posed in $L^2(\Omega,H^1(M_2))$ with $M_2 = \S^2$ or $\T^2$. The proof is very similar to the deterministic case and we do not claim any novelty regarding these techniques. We include the proof to explain how to deal with the probability part. This analysis could be applied to more general manifolds of dimension $2$, as the main tool, that is Strichartz estimates, holds in a more general setting than $\S^2$ or $\T^2$. We refer to \cite{burgertzvStri}.

We recall that from \cite{burgertzvStri}, Strichartz estimates on the sphere implies a loss of derivative. We use the following Strichartz estimate : for all $f\in H^1(M_2)$ and with $S(t) = e^{it\lap}$,
\begin{equation}\label{str}
\|S(t)f\|_{L^3([-1,1],L^\infty(M_2))} \leq C \|S(t)f\|_{L^3([-1,1],W^{1/2,6}(M_2))} \leq C \|f\|_{H^1(M_2)}.
\end{equation}

Let $T\leq 1$. We call $\L_T$ the space $ \mathcal C([-T,T], H^1(M_2)) \cap L^3([-T,T],L^\infty(M_2))$ normed by
$$
\|f\|_{\L_T} = \|f\|_{L^\infty([-T,T],H^1(M_2))}+ \|f\|_{L^3([-T,T],L^\infty(M_2))}.
$$

\begin{proposition}\label{prop-lwpS} Let $R\geq 0$. There exists $C$ such that with $T = \frac1{CR^6}$, and for all $X_0$ such that $\|X_0\|_{L^2(\Omega, H^1(M_2))} \leq R$ the equation \eqref{eqonrv} with initial datum $X_0$ has a unique solution $X$ in $L^2(\Omega,\L_T)$, this solution is continuous in the initial datum and satisfies
$$
\|X\|_{L^2(\Omega,\L_T)} \leq CR.
$$
\end{proposition}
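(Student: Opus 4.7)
The plan is a Banach fixed-point argument for the Duhamel operator
\[
\Phi(X)(t) = S(t)X_0 - i\int_0^t S(t-s)\,\E(|X(s)|^2)\,X(s)\,ds
\]
on a closed ball of $L^2(\Omega,\L_T)$, adapted from the deterministic cubic NLS contraction on $M_2$. The linear part is controlled pointwise in $\omega$: since $S(t)$ is unitary on $H^1$ and satisfies \eqref{str}, one has $\|S(\cdot)X_0\|_{L^2(\Omega,\L_T)}\leq C\|X_0\|_{L^2(\Omega,H^1)}$. For the inhomogeneous term, two applications of Minkowski --- first pushing the $\L_T$ norm inside the $ds$ integral via the same pointwise Strichartz bound applied to the source, then swapping $L^2(\Omega)$ with $L^1_T$ --- reduce matters to controlling $\int_{-T}^T\|F(X)(s)\|_{L^2(\Omega,H^1)}\,ds$, where $F(X) = \E(|X|^2)X$.

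The crux is the trilinear bound
\[
\|\E(|X|^2)\,X\|_{L^2(\Omega,H^1_x)} \leq C\|X\|_{L^2(\Omega,L^\infty_x)}^2\,\|X\|_{L^2(\Omega,H^1_x)}
\]
at each fixed time. To derive it, I would expand $\nabla(\E(|X|^2)X) = 2\Re\E(\overline X\,\nabla X)\,X + \E(|X|^2)\,\nabla X$, take the $L^2(\Omega)$ norm pointwise in $x$, and apply Cauchy--Schwarz in $\omega$ to every mixed expectation $\E(\overline X\,\nabla X)$; both terms then reduce to the same pointwise bound $(\E|X|^2)^2\,\E|\nabla X|^2$, and integration in $x$ together with the Jensen-type inequality $\|\E(|X|^2)\|_{L^\infty_x}\leq \|X\|_{L^2(\Omega,L^\infty_x)}^2$ yields the claim. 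Combining with Hölder in time ($L^3_T\cdot L^3_T\cdot L^\infty_T\hookrightarrow L^1_T$ costing $T^{1/3}$) and swapping $L^2(\Omega)$ with $L^3_T$ by Minkowski (which goes in the right direction) gives
\[
\int_{-T}^T\|F(X)(s)\|_{L^2(\Omega,H^1)}\,ds\leq CT^{1/3}\|X\|_{L^2(\Omega,\L_T)}^3.
\]

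Combining the linear and nonlinear bounds yields $\|\Phi(X)\|_{L^2(\Omega,\L_T)}\leq CR + CT^{1/3}\|X\|_{L^2(\Omega,\L_T)}^3$, so $\Phi$ stabilizes the ball of radius $2CR$ as soon as $T\leq 1/(CR^6)$ for $C$ large enough. The same trilinear estimate, applied to
\[
F(X_1)-F(X_2) = \E(|X_1|^2)(X_1-X_2) + \E\bigl(\overline{X_1-X_2}\,X_1 + \overline{X_2}(X_1-X_2)\bigr)X_2,
\]
makes $\Phi$ a contraction on this ball after possibly shrinking $T$ by a constant factor, giving uniqueness, and running the same bound against two different initial data furnishes Lipschitz continuity of the flow in $X_0$. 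I expect the trilinear bound to be the main obstacle: the correct ordering of $L^\infty_x$ and $L^2(\Omega)$ is not immediate, and the naive choice (putting $L^\infty_x$ before $\E$) would require $L^\infty_x$ control of a Gaussian-type field, which $H^1$ does not give in dimension two. The saving feature is that $\E(|X|^2)$ is deterministic, so Cauchy--Schwarz in $\omega$ applied to $\E(\overline X\,\nabla X)$ absorbs the random factor into the same scalar that already controls the potential.
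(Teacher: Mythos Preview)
Your proposal is correct and follows essentially the same route as the paper's proof: a contraction argument for the Duhamel map on a ball of $L^2(\Omega,\L_T)$, with the nonlinear term handled by expanding $\nabla(\E(|X|^2)X)$, applying Cauchy--Schwarz in $\omega$ to $\E(\overline X\,\nabla X)$, using the Minkowski swap $\|\cdot\|_{L^\infty_x L^2_\omega}\leq \|\cdot\|_{L^2_\omega L^\infty_x}$, and extracting $T^{1/3}$ via H\"older in time. The only cosmetic difference is the point at which the $L^2(\Omega)$ norm is taken: you apply it at each fixed time to obtain the trilinear bound $\|\E(|X|^2)X\|_{L^2(\Omega,H^1)}\lesssim \|X\|_{L^2(\Omega,L^\infty)}^2\|X\|_{L^2(\Omega,H^1)}$ and then integrate, whereas the paper carries the $\omega$-pointwise estimate $\|A(X)\|_{\L_T}\leq C\|X_0\|_{H^1}+CT^{1/3}\E(\|X\|_{\L_T}^2)\|X\|_{\L_T}$ through to the end and only then takes $L^2(\Omega)$; the ingredients and final estimates are identical.
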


\begin{proof} We proceed with a contraction argument.

The Duhamel formulation of \eqref{eqonrv} is 
$$
X(t,x) = S(t) X_0 -i \int_{0}^t S(t-\tau) \Big(\E(|X(\tau,x)|^2)) X(\tau,x)\Big) d\tau.
$$
Let 
$$
A(X)(t,x) = S(t) X_0 -i \int_{0}^t S(t-\tau) \Big(\E(|X(\tau,x)|^2)) X(\tau,x)\Big) d\tau.
$$
We prove that $A$ is contracting in a ball of radius $CR$.

Thanks to Strichartz estimate \eqref{str} and the invariance of the $H^1$ norm under $S(t)$, we get
$$
\|A(X)\|_{\L_T} \leq C_1 \|X_0\|_{H^1(M_2)} + C_1\int_{-T}^T \|\E(|X(\tau)|^2)) X(\tau)\|_{H^1(M_2)}.
$$
We estimate the $H^1$ norm by  $\|\cdot\|_{L^2(M_2)} + \|\grad \cdot\|_{L^2(M_2)}$. We have 
$$
\|\E(|X(\tau)|^2)) X(\tau)\|_{L^2(M_2)} \leq \|\E(|X(\tau)|^2))\|_{L^\infty} \|X(\tau)\|_{L^2}.
$$
As a consequence of Minkowski inequality, we have $\|\cdot\|_{L^p_*,L^q_z} \leq \|\cdot\|_{L^q_z,L^p_*}$ as long as $p\geq q$ hence
$$
\|\E(|X(\tau)|^2))\|_{L^\infty} \leq \E(\|X(\tau)\|_{L^\infty}^2)
$$
Integrating in time yields
$$
\int_{-T}^T \|\E(|X(\tau)|^2)) X(\tau)\|_{L^2(M_2)} \leq \E\Big(\int_{-T}^T \|X(\tau)\|_{L^\infty}^2 \Big) \|X(\tau)\|_{L^\infty([-T,T],L^2(M_2))}
$$
and using H\"older inequality in the integral in time gives
\begin{equation} \label{estim1}
\int_{-T}^T \|\E(|X(\tau)|^2)) X(\tau)\|_{L^2(M_2)} \leq T^{1/3} \E \Big( \|X\|_{\L_T}^2\Big) \|X\|_{\L_T}.
\end{equation}

For the term including derivatives, we have
$$
\grad \Big( \E(|X(\tau,x)|^2)) X(\tau,x)\Big) = 2\Re \Big( \E \Big( \overline X(\tau,x) \grad X(\tau,x)\Big)\Big) X(\tau,x) + \E(|X(\tau,x)|^2) \grad X(\tau,x).
$$
Hence, thanks to H\"older inequality on the mean value, we get
$$
\Big|\grad \Big( \E(|X|^2)) X\Big) \Big| \leq 2 \|\grad X(\tau,x)\|_{L^2(\Omega) } \|X\|_{L^2(\Omega) }|X|+ \E(|X|^2) |\grad X|.
$$
We take the $L^2$ norm in space, we get
$$
\|\grad \Big( \E(|X|^2)) X\Big)\|_{L^2(M_2)} \leq 2 \|\grad X(\tau,x)\|_{L^2(M_2,L^2(\Omega)) } \|X\|_{L^\infty(M_2, L^2(\Omega)) }\|X\|_{L^\infty(M_2)}+ \E(\|X\|_{L^\infty(M_2)}^2)) \| X\|_{H^1}.
$$
Integrating in time yields
\begin{multline*}
\int_{-T}^T \|\grad \Big( \E(|X|^2)) X\Big)\|_{L^2(M_2)} d\tau \leq \\
T^{1/3} \Big( 2 \|\grad X(\tau,x)\|_{L^\infty([-T,T],L^2(M_2,L^2(\Omega))) } \|X\|_{L^3([-T,T],L^\infty(M_2, L^2(\Omega))) }\|X\|_{L^3([-T,T],L^\infty(M_2))}\\
+ \|X\|_{L^3([-T,T],L^2(\Omega,L^\infty(M_2)))}^2 \| X\|_{L^\infty([-T,T],H^1)}\Big).
\end{multline*}

We recall that $\|\cdot\|_{L^p_*,L^q_z} \leq \|\cdot\|_{L^q_z,L^p_*}$ as long as $p\geq q$ hence
\begin{eqnarray*}
\|\grad X(\tau,x)\|_{L^\infty([-T,T],L^2(M_2,L^2(\Omega))) } &\leq &\|\grad X(\tau,x)\|_{L^2(\Omega,L^\infty([-T,T],L^2(M_2))) }\leq \|X\|_{L^2(\Omega,\L_T)}\\
\|X\|_{L^3([-T,T],L^\infty(M_2, L^2(\Omega))) } & \leq &\|X\|_{L^2(\Omega, L^3([-T,T],L^\infty(M_2))) }\leq \|X\|_{L^2(\Omega,\L_T)}\\
\|X\|_{L^3([-T,T],L^2(\Omega,L^\infty(M_2)))} & \leq & \|X\|_{L^2(\Omega,L^3([-T,T],L^\infty(M_2)))}\leq \|X\|_{L^2(\Omega,\L_T)}.
\end{eqnarray*}
We get
\begin{equation}\label{estim2}
\int_{-T,T} \|\grad\Big(\E(|X(\tau)|^2)) X(\tau)\Big)\|_{L^2(M_2)} \leq 3T^{1/3} \E \Big( \|X\|_{\L_T}^2\Big) \|X\|_{\L_T}.
\end{equation}

Putting together \eqref{estim1} and \eqref{estim2} yields
$$
\|A(X)\|_{\L_T} \leq C_1\|X_0\|_{H^1(M_2)} + 4C_1 T^{1/3} \E \Big( \|X\|_{\L_T}^2\Big) \|X\|_{\L_T}.
$$
Taking its $L^2$ norm in probability yields
$$
\|A(X)\|_{L^2(\Omega,\L_T)} \leq C_1R + 4C_1 T^{1/3} \|X\|_{L^2(\Omega,\L_T)}^3.
$$
Hence, for $T \leq \frac1{(4C_1)^3 (2C_1 R)^6}$, the ball of $L^2(\Omega, \L_T)$ of radius $2C_1 R$ is stable under $A$. 

We prove that $A$ is contracting on this ball, we have 
$$
A(X_1) - A(X_2) = \int_{0}^t S(t-\tau) \Big( \E(|X_1|^2) X_1 - \E(|X_2|^2)X_2\Big) d\tau.
$$
Since 
$$
\E(|X_1|^2) X_1 - \E(|X_2|^2)X_2 = \E(|X_1|^2) (X_1-X_2) + \E(X_1\overline{(X_1-X_2)}X_2 + \E( \overline{X_2} (X_1-X_2))X_2,
$$
buying doing the same computations as previously, we get
$$
\|A(X_1) - A(X_2)\|_{L^2(\Omega, \L_T)} \leq 4C_1 T^{1/3}\Big( \|X_1\|_{L^2(\Omega, \L_T)}^2 +  
\|X_2\|_{L^2(\Omega, \L_T)}^2\Big)  \|X_1 - X_2\|_{L^2(\Omega, \L_T)}
$$
thus on the ball of radius $2C_1 R$ we get
$$
\|A(X_1) - A(X_2)\|_{L^2(\Omega, \L_T)} \leq 12C_1 (2C_1R)^2 T^{1/3} \|X_1 - X_2\|_{L^2(\Omega, \L_T)}
$$
and for $T < \frac1{(12C_1)^3 (2C_1 R)^6 }$ the map $A$ is contracting, which concludes the proof.
\end{proof}

\subsection{Local well-posedness on \texorpdfstring{$\mathbb S^3$}{S3} and \texorpdfstring{$\mathbb T^3$}{T3}}

In this subsection, we prove local well-posedness of \eqref{eqonrv} on $\T^3$ and $\S^3$, relying on \cite{bourgainlattice} and \cite{BGTmultilin}. Once again, we adapt the techniques from these papers to deal with the probability space but we do not claim any novelty regarding the deterministic analysis. We remark that as opposed to dimension $2$, one cannot use the same techniques for more general manifolds.

Let $M_3 = \T^3$ or $\S^3$. Let $(e_k)_k$ be an orthonormal basis of $L^2(M_3)$ consisting in eigenvalues of the Laplace-Beltrami operator associated to the eigenvalue $(\lambda_k)_k$. For all $u\in L^2(M_3)$, let $u_k = \an{e_k,u}$.

Let $X^{s,b}(M_3)$ be the Bourgain space induced by the norm
\begin{equation}\label{bourgainspace}
\|u\|_{X^{s,b}(M_3)}^2 = \sum_k \an{\lambda_k}^s\|\an{\lambda_k + \tau}\hat u_k(\tau)\|_{L^2(\R_\tau)}^2
\end{equation}
where $\hat u_k$ is the Fourier transform in time of $u_k$.

Finally, for $T\leq 1$, let $X_T^{s,b}(M_3)$ be the Bourgain space induced by the norm
\begin{equation}\label{defBT}
\|u\|_{X_T^{s,b}(M_3)} = \inf \{\|w\|_{X^{s,b}(M_3)} \,|\, w_{|[-T,T]} = u \}
\end{equation}

Adapting the proof of Proposition 2.11 in \cite{BGTbilin} to $M_3$ as it is done in \cite{BGTmultilin}, one gets the following estimate
\begin{equation}\label{linest}
\big\| S(t) u_0 + \int_{0}^t S(t-\tau) F(x,\tau) d\tau \big\|_{X_T^{1,b}(M_3)}  \lesssim \|u_0\|_{H^1(M_3)} + T^{1-b-b'}\|F\|_{X^{1,-b'}(M_3)} 
\end{equation}
for $(b,b')$ satisfying $0<b'<\frac12 < b$ and $b+b'<1$.

From \cite{bourgainlattice} for the torus and from \cite{BGTmultilin} for the sphere, one can deduce the following trilinear estimate : there exists $(b,b') \in \R^2$ such that $0<b'<\frac12<b$ and $b+b'<1$ such that
\begin{equation}\label{trilinest}
\|uvw\|_{X^{1,-b'}(M_3)} \lesssim \|u\|_{X^{1,b}(M_3)}\|v\|_{X^{1,b}(M_3)}\|w\|_{X^{1,b}(M_3)} .
\end{equation}
We remark that the constant implied by \eqref{trilinest} and the couple $(b,b')$ may depend on $M_3$.

\begin{proposition}\label{prop-lwpM3} Let $R\geq 0$. There exists $C$ and  $T = T(R)$, such that for all $X_0$ such that $\|X_0\|_{L^2(\Omega, H^1(M_3))} \leq R$ the equation \eqref{eqonrv} with initial datum $X_0$ has a unique solution $X$ in \\$L^2(\Omega,X_T^{1,b}(M_3))$, this solution is continuous in the initial datum and satisfies
$$
\|X\|_{L^2(\Omega,X_T^{1,b}(M_3))} \leq CR.
$$
\end{proposition}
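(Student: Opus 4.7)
The plan is to run a Picard fixed-point argument in $L^2(\Omega, X_T^{1,b}(M_3))$, with $(b,b')$ the pair provided by \eqref{trilinest}. Define the Duhamel map
$$
A(X)(\omega,t,x) = S(t) X_0(\omega) - i \int_0^t S(t-\tau)\big(\E(|X(\tau)|^2)\, X(\omega,\tau)\big) d\tau
$$
and seek a fixed point in a ball of $L^2(\Omega, X_T^{1,b}(M_3))$ of radius $2 C_1 R$. For each fixed $\omega$, pick an extension of $X(\omega)$ from $X_T^{1,b}(M_3)$ to $X^{1,b}(M_3)$ whose norm is at most twice the restriction norm; applying the linear estimate \eqref{linest} to this extension produces
$$
\|A(X)(\omega)\|_{X_T^{1,b}(M_3)} \lesssim \|X_0(\omega)\|_{H^1(M_3)} + T^{1-b-b'} \|\E(|X|^2) X(\omega)\|_{X^{1,-b'}(M_3)}.
$$

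The only genuinely new step compared with the deterministic theory is handling the probability expectation inside the Bourgain norm. I would realize the expectation as an integral against an independent copy $\omega'$ of the probability variable,
$$
\E(|X(t,x)|^2)\, X(\omega,t,x) = \int_\Omega \overline{X(\omega',t,x)}\, X(\omega',t,x)\, X(\omega,t,x)\, dP(\omega').
$$
Since the norm \eqref{bourgainspace} is $L^2$-based in both the spatial frequency index and the time-frequency variable, Minkowski's integral inequality pulls $dP(\omega')$ outside the $X^{1,-b'}$ norm. The deterministic trilinear estimate \eqref{trilinest}, applied for each fixed pair $(\omega,\omega')$, then gives
$$
\|\overline{X(\omega')} X(\omega') X(\omega)\|_{X^{1,-b'}(M_3)} \lesssim \|X(\omega')\|_{X^{1,b}(M_3)}^2\, \|X(\omega)\|_{X^{1,b}(M_3)}.
$$
Integrating in $\omega'$ against $dP$ and taking the $L^2$ norm in $\omega$ yields
$$
\|\E(|X|^2) X\|_{L^2(\Omega, X^{1,-b'}(M_3))} \lesssim \|X\|_{L^2(\Omega, X_T^{1,b}(M_3))}^3,
$$
so that
$$
\|A(X)\|_{L^2(\Omega, X_T^{1,b}(M_3))} \leq C_1 R + C_2\, T^{1-b-b'}\, \|X\|_{L^2(\Omega, X_T^{1,b}(M_3))}^3.
$$
Choosing $T$ of order $R^{-6/(1-b-b')}$ makes the ball of radius $2 C_1 R$ invariant under $A$.

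For contraction and continuity in the initial datum, decompose
$$
\E(|X_1|^2)X_1 - \E(|X_2|^2)X_2 = \E(|X_1|^2)(X_1-X_2) + \E\big(\overline{X_1}(X_1-X_2)\big) X_2 + \E\big(\overline{(X_1-X_2)}\, X_2\big) X_2,
$$
and run the same Minkowski-then-trilinear argument on each of the three pieces. This produces a difference bound with a factor $T^{1-b-b'}\big(\|X_1\|_{L^2(\Omega, X_T^{1,b})}^2 + \|X_2\|_{L^2(\Omega, X_T^{1,b})}^2\big)$ in front of $\|X_1-X_2\|_{L^2(\Omega, X_T^{1,b})}$, so shrinking $T$ further gives strict contraction. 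The main obstacle beyond the deterministic case is really a single algebraic step: turning $\E(|X|^2)$ into an integral against an independent copy of $X$ and checking that Minkowski cleanly exchanges this integral with the Bourgain norm. Once that is in place, all of the analytic work is already contained in \eqref{linest} and \eqref{trilinest}, borrowed from \cite{bourgainlattice,BGTmultilin}.
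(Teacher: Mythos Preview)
Your proposal is correct and follows essentially the same route as the paper. The only stylistic difference is that the paper isolates the key probabilistic trilinear estimate as a separate lemma (Lemma~\ref{lem-trilin}) and proves it by duality---pairing $\E(uv)w$ against a test function $h\in L^2(\Omega,X^{-1,b'})$, writing $\E$ as an integral over a second probability variable, applying \eqref{trilinest} pointwise in $(\omega_1,\omega_2)$, and finishing with Cauchy--Schwarz in each probability variable---whereas you carry out the equivalent computation inline by pulling the $dP(\omega')$ integral outside the $X^{1,-b'}$ norm via Minkowski and then applying \eqref{trilinest} directly; both arguments yield the same cubic bound and the rest of the contraction argument is identical.
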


\begin{remark}\label{rem-persistencehireg} First, the $X^{s,b}$ norm controls the $H^s$ norm. We also have persistence of higher regularity in the sense that if the initial datum belongs to $H^s(M_3)$ with $s>1$, then the solution remains in $X^{s,b}_T(M_3)$ for a time $T$ which depends only on the $H^1(M_3)$ norm of the initial datum. This is due to the fact that 
$$
\big\| \int_{0}^t S(t-\tau) \Big(|u(\tau)|^2 u(\tau)\Big) d\tau\big\|_{X^{s,b}_T(M_3)} \lesssim T^\alpha \|u\|_{X^{s,b}_T}\|u\|_{X^{1,b}_T}^2
$$
for some positive $\alpha$ (see \cite{bourgainlattice, BGTmultilin}) which eventually leads to
$$
\big\| \int_{0}^t S(t-\tau) \Big(\E(|X(\tau)|^2) X(\tau)\Big) d\tau\big\|_{L^2(\Omega,X^{s,b}_T(M_3))} \lesssim T^\alpha \|X\|_{L^2(\Omega,X^{s,b}_T(M_3))}\|X\|_{L^2(\Omega,X^{1,b}_T(M_3))}^2.
$$    \end{remark}

Before we prove this proposition, we prove the following lemma.

\begin{lemma}\label{lem-trilin} For all $u,v,w \in L^2(\Omega, X^{1,b}(M_3))$, we have 
$$
\|\E(uv)w\|_{L^2(\Omega, X^{1,-b'}(M_3))} \lesssim \|u\|_{L^2(\Omega, X^{1,b}(M_3))}\|v\|_{L^2(\Omega, X^{1,b}(M_3))}\|w\|_{L^2(\Omega, X^{1,b}(M_3))}.
$$
\end{lemma}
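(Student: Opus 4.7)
The idea is to reduce everything to the deterministic trilinear estimate \eqref{trilinest} by disentangling the expectation. The key observation is that $\E(uv)$ is a deterministic function of $(t,x)$ built as an average over an independent copy of the probability space, so it can be pulled out of $X^{1,-b'}$ via Minkowski's inequality. Let me denote by $\omega$ the probability variable in which $w$ is evaluated, and by $\omega_1$ the dummy variable of integration used to express the expectation, so that for almost every $\omega$
\[
\E(uv)(t,x)\, w(\omega,t,x) \;=\; \int_{\Omega} u(\omega_1,t,x)\, v(\omega_1,t,x)\, w(\omega,t,x)\, dP(\omega_1).
\]

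First I would apply Minkowski's inequality for the Bochner integral to obtain
\[
\|\E(uv)\,w(\omega)\|_{X^{1,-b'}(M_3)} \;\leq\; \int_{\Omega} \bigl\|u(\omega_1) v(\omega_1) w(\omega)\bigr\|_{X^{1,-b'}(M_3)}\, dP(\omega_1).
\]
Next, for each fixed pair $(\omega_1,\omega)$, the three factors $u(\omega_1)$, $v(\omega_1)$, $w(\omega)$ are ordinary (deterministic) elements of $X^{1,b}(M_3)$, and the trilinear estimate \eqref{trilinest} applies directly to give the pointwise bound
\[
\bigl\|u(\omega_1) v(\omega_1) w(\omega)\bigr\|_{X^{1,-b'}} \;\lesssim\; \|u(\omega_1)\|_{X^{1,b}}\,\|v(\omega_1)\|_{X^{1,b}}\,\|w(\omega)\|_{X^{1,b}}.
\]

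Then I would apply Cauchy--Schwarz in $\omega_1$ to the remaining integral, which collapses the product of the first two factors into $\|u\|_{L^2(\Omega,X^{1,b})}\|v\|_{L^2(\Omega,X^{1,b})}$. This leaves
\[
\|\E(uv)\,w(\omega)\|_{X^{1,-b'}} \;\lesssim\; \|u\|_{L^2(\Omega,X^{1,b})}\,\|v\|_{L^2(\Omega,X^{1,b})}\,\|w(\omega)\|_{X^{1,b}},
\]
and taking the $L^2(\Omega_\omega)$ norm of both sides yields the desired estimate.

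There is no real obstacle in this argument beyond checking measurability/integrability so that Minkowski's inequality for the Bochner integral is legitimate; this is standard since $X^{1,-b'}(M_3)$ is a separable Banach space and the assumptions on $u,v,w$ in $L^2(\Omega, X^{1,b}(M_3))$ ensure the relevant strong measurability. The only genuinely non-trivial input is the trilinear estimate \eqref{trilinest} itself, but that is supplied by \cite{bourgainlattice,BGTmultilin}. The rest is a clean application of Minkowski and Cauchy--Schwarz, and in particular the $L^2(\Omega)$ norm on $w$ appears at the end for free because $\E(uv)$ carries no randomness.
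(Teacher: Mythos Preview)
Your proof is correct and follows essentially the same approach as the paper: both arguments decouple the probability variable carrying $u,v$ from the one carrying $w$, apply the deterministic trilinear estimate \eqref{trilinest} pointwise in $(\omega_1,\omega)$, and finish with Cauchy--Schwarz over the probability space. The only cosmetic difference is that the paper reaches the pointwise estimate by duality (testing against $h\in L^2(\Omega,X^{-1,b'})$) whereas you use Minkowski's inequality for the Bochner integral; these are equivalent formulations of the same step.
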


\begin{proof} We proceed by duality. Let $h$ in the dual of $L^2(\Omega, X^{1,-b'}(M_3))$ that is $L^2(\Omega, X^{-1,b'})$. We have 
$$
\an{\E(uv)w ,h}_{\Omega \times M_3} = \int_{\Omega }\int_{M_3} \E(u(x)v(x))w(\omega_1,x) h(\omega_1,x) dx dP(\omega_1)
$$
where $\an{\cdot, \cdot}_{\Omega \times M_3}$ is the inner product in $\Omega \times M_3$. We replace $\E$ by an integral over $\Omega$ we get
$$
\an{\E(uv)w ,h}_{\Omega \times M_3} = \int_{\Omega \times \Omega } \int_{M_3} u(\omega_2,x)v(\omega_2,x) w(\omega_1,x) h(\omega_1,x)dx dP(\omega_1)dP(\omega_2).
$$
Using \eqref{trilinest} for $u(\omega_2)$, $v(\omega_2)$ and $w(\omega_1)$, we get
$$
|\an{\E(uv)w ,h}_{\Omega \times M_3}| \lesssim \int_{\Omega \times \Omega } \| u(\omega_2)\|_{X^{1,b}(M_3)}\|v(\omega_2) \|_{X^{1,b}(M_3)}\|w(\omega_1)\|_{X^{1,b}(M_3)}\| h(\omega_1)\|_{X^{-1,b'}}.
$$
We can use Cauchy-Schwartz inequality on $\omega_1$ and on $\omega_2$ to get
$$
|\an{\E(uv)w ,h}_{\Omega \times M_3}| \lesssim  \| u\|_{L^2(\Omega,X^{1,b}(M_3))}\|v \|_{L^2(\Omega,X^{1,b}(M_3))}\|w\|_{L^2(\Omega,X^{1,b}(M_3))}\| h\|_{L^2(\Omega,X^{-1,b'})}.
$$
which concludes the proof. \end{proof}

\begin{proof}[Proof of Proposition \ref{prop-lwpM3}.] Let 
$$
A(X) =  S(t) X_0 + \int_{0}^t S(t-\tau) \E(|X|^2)X d\tau
$$
such that the Duhamel formulation of \eqref{eqonrv} is 
$$
X = A(X).
$$
We proceed with a contraction argument on $L^2(\Omega, X_T^{1,b}(M_3))$. We have, thanks to \eqref{linest},
$$
\|A(X)\|_{L^2(\Omega, X_T^{1,b}(M_3))} \lesssim \|X_0\|_{L^2(\Omega,H^1(M_3))} + T^{1-(b+b')} \|\E(|X|^2X)\|_{L^2(\Omega,X^{1,-b'}(M_3))}
$$
and thanks to Lemma \ref{lem-trilin},
$$
\|\E(|X|^2X)\|_{L^2(\Omega,X^{1,-b'}(M_3))} \lesssim \|X\|_{L^2(\Omega, X^{1,b}(M_3)_T)}^3.
$$
For the same reasons
$$
\|A(X_1) - A(X_2)\|_{L^2(\Omega, X_T^{1,b}(M_3))} \lesssim  T^{1-b-b'}\|X_1-X_2\|_{L^2(\Omega, X_T^{1,b}(M_3))}(\|X_1\|_{L^2(\Omega, X_T^{1,b}(M_3))}\|X_2\|_{L^2(\Omega, X_T^{1,b}(M_3))})\; .
$$
Hence, as $b+b'<1$, there exist $C$ and $T(R)$ such that the ball of $L^2(\Omega, X_T^{1,b}(M_3))$ of radius $CR$ is stable under $A$ and such that $A$ is contracting on this ball, which concludes the proof.
\end{proof}

\subsection{Global Well-posedness}

Let $M = M_2$ or $M_3$. In this subsection, we prove global well-posedness in $H^1(M)$ using energy methods. 

\begin{lemma} Let
$$
\mE (X) = \mE_{\textrm{kin}}(X) + \mE_{\textrm{pot}}(X) = \frac12 \int_{\Omega\times M} \overline X (1-\lap)X + \frac14 \int_{M} \E(|X|^2)^2.
$$
The quantity $\mE$ is invariant under the flow of \eqref{eqonrv}.
\end{lemma}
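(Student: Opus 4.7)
The strategy is the standard energy-method computation for cubic NLS, adapted to accommodate the expectation appearing in the nonlinearity. First I would split
$$
\mE(X) = \frac12 \E\int_M |X|^2 + \frac12 \E\int_M |\grad X|^2 + \frac14 \int_M \E(|X|^2)^2
$$
and treat the three pieces separately.

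The first piece (mass) is handled by the usual computation: using $\partial_t X = i\lap X - i\E(|X|^2)X$ and its conjugate, one finds $\partial_t |X|^2 = -2\Im(\overline X \lap X)$, in which the cubic nonlinearity drops out because $\E(|X|^2)$ is real-valued. Integrating against $dx$ and then taking expectation kills the remaining term after one integration by parts on $M$ (which has no boundary in the cases considered). This also yields the pointwise identity
$$
\frac{d}{dt}\E(|X(t,x)|^2) = -2\,\Im\bigl(\E(\overline X \lap X)\bigr),
$$
which I will reuse for the potential piece.

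For the $H^1$ (gradient) piece I would differentiate, integrate by parts to put a Laplacian on $\overline X$, and substitute the equation, getting
$$
\frac{d}{dt}\Big(\tfrac12\E\textstyle\int_M|\grad X|^2\Big) = -\Re\,\E\int_M \lap\overline X\bigl(i\lap X - i\E(|X|^2)X\bigr)dx.
$$
The purely Laplacian term is $-i|\lap X|^2$, which is imaginary, hence its real part is zero. The remaining term is $-\Im\,\E\int_M \E(|X|^2)\,\lap\overline X\cdot X\,dx$. For the potential piece, differentiating $\frac14\int_M \E(|X|^2)^2$ and substituting the mass-density identity above gives $-\Im\int_M \E(|X|^2)\E(\overline X \lap X)\,dx$, which by pulling the deterministic factor inside the expectation equals $-\Im\,\E\int_M \E(|X|^2)\,\overline X \lap X\,dx$. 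Since $\E(|X|^2)$ is real, the integrands in the two remaining terms are complex conjugates, so their imaginary parts are opposite and they cancel.

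The one subtlety to address is the regularity needed to justify these manipulations: the computation uses $\lap X$ and thus formally requires $X(t)\in H^2$, whereas Propositions \ref{prop-lwpS} and \ref{prop-lwpM3} only give $L^2(\Omega,H^1)$. I would close this gap by first proving the identity for smoother initial data (using the persistence of higher regularity noted in Remark \ref{rem-persistencehireg} on $M_3$, and an analogous persistence argument on $M_2$ based on the Strichartz estimate \eqref{str}), then extending to $L^2(\Omega,H^1)$ by density together with the continuity of the flow in the initial datum, which makes both $\mE_{\textrm{kin}}$ and $\mE_{\textrm{pot}}$ continuous functionals on $L^2(\Omega,H^1(M))$. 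This density/continuity step is where all the real work lies; the algebraic cancellation itself is a one-line check.
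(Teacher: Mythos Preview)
Your proof is correct and follows essentially the same approach as the paper: both differentiate the kinetic and potential parts, substitute the equation, and observe an algebraic cancellation, with the regularity issue handled by approximation via persistence of higher regularity. The only cosmetic difference is that the paper keeps $i\partial_t X$ unsubstituted until the last step and recognises the sum as $\Im\int_{\Omega\times M}\overline{i\partial_t X}\,(i\partial_t X)=0$, whereas you substitute earlier and cancel the two remaining terms as complex conjugates; these are the same computation with different bookkeeping.
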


\begin{proof} Thanks to an approximation argument and the persistence of higher regularity, see Remark \ref{rem-persistencehireg}, we can assume that $X$ is regular enough so that the computations below are justified.

Let $X(t)$ be a solution of \eqref{eqonrv} and let us differentiate $\mE(X(t))$. We have
$$
\partial_t \mE_{\textrm{kin}} (X(t)) = \Re \Big( \int_{\Omega \times M} (\partial_t \overline X ) X \Big) + \Re \Big( \int_{\Omega \times M} (\partial_t \overline X ) (-\lap X) \Big).
$$
Because $X$ satisfies \eqref{eqonrv}, we have 
$$
\Re \Big( \int_{\Omega \times M} (\partial_t \overline X ) X \Big) = \Im \Big( \int_{\Omega \times M} (\overline{-\lap X + \E(|X|^2)X})X\Big) 
$$
and because of the imaginary part, this quantity is zero. Therefore,
$$
\partial_t \mE_{\textrm{kin}} (X(t)) = \Im \Big( \int_{\Omega \times M} \overline{ i\partial_t X}  (-\lap X) \Big).
$$

Differentiating $\mE_{\textrm{pot}}(X(t))$ yields
$$
\partial_t \mE_{\textrm{pot}}(X(t)) = \frac12 \int_{M} \E(|X|^2) \partial_t (\E(|X|^2)).
$$
As $\partial_t$ and $\E$ commute, we get
$$
\partial_t \mE_{\textrm{pot}}(X(t)) = \int_{M} \E(|X|^2) \E(\Re (\overline{\partial_t X}X))
$$
and we write the second expectation as an integral in the sense that
$$
\partial_t \mE_{\textrm{pot}}(X(t)) = \int_{\Omega \times M} \E(|X|^2) \Re (\overline{\partial_t X}X)
$$
which finally yields
$$
\partial_t \mE_{\textrm{pot}}(X(t)) = \Re \Big( \int_{\Omega \times M} \overline{\partial_t X}\E(|X|^2) X\Big) = \Im \Big( \int_{\Omega \times M} \overline{i\partial_t X}\E(|X|^2) X\Big).
$$
Summing the derivatives of $\mE_{\textrm{kin}}(X(t))$ and $\mE_{\textrm{pot}}(X(t))$ gives
$$
\partial_t \mE(X(t)) =  \Im \Big( \int_{\Omega \times M} \overline{i\partial_t X} ( -\lap X + \E(|X|^2) X )
$$
and because of the imaginary part and the fact that $X$ satisfies \eqref{eqonrv}, we get $\partial_t \mE(X(t)) =0$, which concludes the proof. \end{proof}

Since $\mE$ controls the $L^2(\Omega,H^1(M))$ norm, we get the following proposition.

\begin{proposition}\label{prop-gwpS} The equation \eqref{eqonrv} is globally well-posed in $L^2(\Omega, H^1(M))$. \end{proposition}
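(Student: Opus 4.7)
The plan is to combine the local theory (Propositions \ref{prop-lwpS} and \ref{prop-lwpM3}) with the conservation of $\mE$ to obtain an \emph{a priori} $L^2(\Omega, H^1(M))$ bound and then iterate the local existence on a sequence of time intervals of uniform length.

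First I would observe that $\mE$ controls $\|X\|_{L^2(\Omega,H^1(M))}$ from above. Indeed, $\mE_{\textrm{kin}}(X) = \tfrac12 \|X\|_{L^2(\Omega,H^1(M))}^2$ by definition, and $\mE_{\textrm{pot}}(X) = \tfrac14 \int_M \E(|X|^2)^2 \ge 0$, so
$$
\tfrac12 \|X(t)\|_{L^2(\Omega,H^1(M))}^2 \le \mE(X(t)) = \mE(X_0)
$$
as long as the solution exists. In particular, $\|X(t)\|_{L^2(\Omega,H^1(M))} \le \sqrt{2\mE(X_0)} =: R_0$ on the maximal interval of existence.

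Next, I would run the standard bootstrap argument. By Proposition \ref{prop-lwpS} (on $M_2$) or Proposition \ref{prop-lwpM3} (on $M_3$), there exists a time $T_0 = T(R_0) > 0$, depending only on $R_0$, such that from any initial datum of $L^2(\Omega,H^1(M))$ norm at most $R_0$, there is a unique solution on $[-T_0, T_0]$. Given $X_0 \in L^2(\Omega,H^1(M))$, I would solve on $[-T_0, T_0]$, use the conservation of $\mE$ to conclude that $\|X(\pm T_0)\|_{L^2(\Omega,H^1(M))} \le R_0$, and then reapply the local theory with initial time $\pm T_0$ to extend the solution to $[-2T_0, 2T_0]$. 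Iterating, one covers the whole of $\R$; the key point is that the step size $T_0$ stays fixed because the $L^2(\Omega,H^1)$ norm never grows beyond $R_0$.

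Uniqueness on $\R$ follows from local uniqueness applied on each subinterval. Continuity of the flow in the initial datum on an arbitrary compact time interval $[-N T_0, N T_0]$ is obtained by composing the continuous local flows (after slightly inflating $R_0$ to $R_0 + \varepsilon$ so that nearby data stay within the local well-posedness ball, which is permitted since $\mE$ is continuous on $L^2(\Omega,H^1(M))$). There is no real obstacle here; the only subtlety worth being careful about is justifying the conservation of $\mE$ for rough data, but this has already been handled in the preceding lemma via the persistence of higher regularity (Remark \ref{rem-persistencehireg}) and approximation.
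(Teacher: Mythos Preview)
Your argument is correct and is exactly what the paper has in mind: the paper's own proof is the single sentence ``Since $\mE$ controls the $L^2(\Omega,H^1(M))$ norm, we get the following proposition,'' and your write-up simply spells out the standard iteration behind that line. There is nothing to add.
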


\subsection{Continuity with regard to the initial datum, interpretation in terms of law}\label{subsec-interplaw}

\begin{remark} The first thing one can remark is that we have continuity in the initial datum. Indeed, let $X_1$ and $X_2$ be the solutions of \eqref{eqonrv} with initial data $X_1(0)$ close to $X_2(0)$. Let $R$ be the maximum of $\|X_1(0)\|_{L^2(\Omega, H^1(M))}$ and $\|X_2(0)\|_{L^2(\Omega, H^1(M))}$. Then, up to times of order $R^{-6}$ for $M=M_2$ or $R^{-N}$ for some $N$ for $M_3$, we have 
$$
\|X_1(t) - X_2(t)\|_{L^2(\Omega, H^1(M))} \leq C \|X_1(0) - X_2(0)\|_{L^2(\Omega, H^1(M))}
$$
with $C$ independent from $R$.

Iterating this estimate for longer times, in view of the conservation of the energy $\mE$ yields estimates such as 
$$
\|X_1(t) - X_2(t)\|_{L^2(\Omega, H^1(M))} \leq Ce^{cR^N(1+|t|)} \|X_1(0) - X_2(0)\|_{L^2(\Omega, H^1(M))}.
$$
We note that the spaces we used in the local well-posedness were not optimal at least for $M_2$, and one could probably reach finite time estimates for times of order $R^{-(4+\varepsilon)}$ for $M_2$, $\varepsilon > 0$.
\end{remark}

\begin{remark}\label{rem-wass}Let $\rho_1^t$  for all $t\in \R$ be the law of, or the measure induced by, $X_1(t)$ and $\rho_2^t$ be the law of $X_2(t)$. Let $d_2$ be the Wasserstein distance of order $2$ on the measures on $H^1(M)$, that is
$$
d_2(\mu_1,\mu_2) = \inf_{\mu \in \textrm{Marg}(\mu_1,\mu_2)} \Big(\int \|u-v\|_{H^1}^2 d\mu(u,v) \Big)^{1/2}
$$
where $\mu_i$ for $i=1,2$ are measures on $H^1(M)$ such that $\int \|u\|^2_{H^1(M)} d\mu_i(u)$ are finite and $\textrm{Marg}(\mu_1,\mu_2)$ is the set of measures on $H^1(M)\times H^1(M)$ whose marginals are $\mu_1$ and $\mu_2$. Let $\mu^t$ be the law of $(X_1(t),X_2(t))$. Since $\mu^t$ has for marginals the law of $X_1(t)$, that is $\rho_1^t$ and the law of $X_2(t)$, that is $\rho_2^t$, we get that
$$
d_2(\rho_1^t, \rho_2^t) \leq \Big(\int \|u-v\|_{H^1}^2 d\mu^t(u,v) \Big)^{1/2} = \|X_1(t) - X_2(t)\|_{L^2(\Omega, H^1(M))}
$$
and therefore
$$
d_2(\rho_1^t, \rho_2^t) \leq  Ce^{cR^N(1+|t|)} \|X_1(0) - X_2(0)\|_{L^2(\Omega, H^1(M))}.
$$
Since this is true for all $X_1(0)$ and $X_2(0)$ with laws $\rho_1^0$ and $\rho_2^0$, we get
$$
d_2(\rho_1^t, \rho_2^t) \leq  Ce^{cR^N(1+|t|)} d_2(\rho_1^0,\rho_2^t)
$$
which gives a continuity in the law of the initial datum.

If we replace the $X_2(t)$ by $Y(t)$, as the law of $Y(t)$, called $\nu$, does not depend on $t$, we get that
$$
d_2(\rho_1^t,\nu)  \leq  Ce^{cR^N(1+|t|)}d_2(\rho_1^0,\nu) 
$$
which is a result of stability for $\nu$ under the flow of the equation \eqref{eqonrv}.
\end{remark}

\begin{remark} We have what we could call orbital stability in the sense that as $\mE$ is conserved, $\mE(X(t)) - \mE(Y(t))$ does not depend on time. Nevertheless, $\mE(X(t)) - \mE(Y(t))$ does not control a norm of $X-Y$. \end{remark}

\section{Well-posedness on the Euclidean space}\label{sec-WPE}

In $\R^d$, the equilibria $Y$ are not localised. In particular, the law of $Y$ is invariant under translations. In this section, we prove the existence of dynamics for perturbations around $Y$ which are localised, in the sense that we prove global well-posedness for solutions $X$ of \eqref{eqonrv} that are written $X = Y + Z$ where $Z$ is localised as $Z \in L^2(\Omega, H^1(\R^d))$.

\subsection{Perturbed equation and local well-posedness for \texorpdfstring{$d\leq 3$}{d<=3}}

We perturb $Y$. Let $X = Y + Z$ such that $Z(0) = Z_0$ is in $L^2(\Omega , H^1(\R^d))$. The random variable $Z$ solves the equation
\begin{equation}\label{eqonZ}
i\partial_t Z = (-\lap + m)Z + \Big( \E(|Z|^2) + 2\Re ( \E(\overline Y Z)) \Big) (Y+Z).
\end{equation}

Let $\mathcal L_T = L^p([-T,T],L^\infty(\R^d)) \cap \mathcal C([-T,T], H^1(\R^d))$ with $p = 4 \frac{d+1}{d(d-1)}$. We prove local well-posedness in $L^2(\Omega,\mathcal L_T)$. First, in dimension $d\leq 3$, we have Strichartz estimates in the sense that there exists $C$ such that for all $g\in H^1$,
\begin{equation}\label{strR}
\|S(t) f\|_{\mathcal L_T} \leq \|f\|_{H^1}.
\end{equation}

Indeed, for $d\leq 3$, $p>2$ and with $q$ such that
$$
\frac2{p} + \frac{d}{q} = \frac{d}2
$$
that is $q = d+1$ or $\frac1{q}< \frac1{d}$, we get that thanks to Sobolev embeddings
$$
\|S(t)g\|_{L^p,L^\infty} \leq C\|S(t) g\|_{L^p,W^{q,1}}
$$
and thanks to Strichartz estimates and the commutation of the differential operator $D=\sqrt{1-\lap}$ and $S(t)$
$$
\|S(t)g\|_{L^p,L^\infty} \leq C \|g\|_{H^1}.
$$

Let
$$
m_2 = \int k^2 |f(k)|^2 dk < \infty .
$$
That means that for all $t$ and all $x$, $\E(|\grad Y(t,x)|^2) = m_2 < \infty$.

\begin{proposition}\label{prop-lwp}There exists $C$ such that for all $Z_0 \in L^2(\Omega, H^1)$, with 
$$
T =  \min \Big(1,\frac1{C(m+m_2)},\frac1{C(\sqrt{m+m_2})\|Z_0\|_{L^2(\Omega,H^1)})^{p/(p-1)}},\frac1{C \|Z_0\|_{L^2(\Omega, H^1)}^{(2p)/(p-2)}} \Big)
$$ 
the equation \eqref{eqonZ} with initial datum $Z_0$ admits a unique solution $Z$ in $L^2(\Omega,\mathcal L_T)$. This solution satisfies 
$$
\|Z\|_{L^2(\Omega,\mathcal L_T)} \leq C \|Z_0\|_{L^2(\Omega, H^1)}.
$$
\end{proposition}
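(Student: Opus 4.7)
The plan is a Picard iteration on the Duhamel map
\[
A(Z)(t) = S(t)Z_0 - i\int_0^t S(t-\tau)\Big[mZ + \big(\E(|Z|^2) + 2\Re\,\E(\overline Y Z)\big)(Y+Z)\Big](\tau)\,d\tau
\]
in the ball of radius $2C\|Z_0\|_{L^2(\Omega,H^1)}$ in $L^2(\Omega,\mathcal L_T)$, following the template of Proposition \ref{prop-lwpS}. The Strichartz estimate \eqref{strR} combined with the energy estimate on the $\mathcal C_T H^1$ part of $\mathcal L_T$ gives
\[
\|A(Z)\|_{L^2(\Omega,\mathcal L_T)} \leq C\|Z_0\|_{L^2(\Omega,H^1)} + C\,\|F(Z)\|_{L^2(\Omega, L^1([-T,T], H^1(\R^d)))}
\]
for $F(Z)$ the full nonlinearity, and an analogous bound for $A(Z_1)-A(Z_2)$ will provide the contraction.

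The heart of the proof is estimating $F(Z)$ piece by piece. Expanding gives five terms grouped by degree in $Z$: the linear pieces $mZ$ and $2\Re\,\E(\overline Y Z)\,Y$, the quadratic pieces $\E(|Z|^2)\,Y$ and $2\Re\,\E(\overline Y Z)\,Z$, and the purely cubic $\E(|Z|^2)\,Z$. The cubic term is treated exactly as in Proposition \ref{prop-lwpS}, producing a contribution $CT^{1-2/p}\|Z\|_{L^2(\Omega,\mathcal L_T)}^3$ after using Minkowski to swap $\E$ with $L^\infty_x$ and Hölder in time against $L^p_T L^\infty_x$. For every term involving $Y$ I would exploit the pointwise identities $\E(|Y(t,x)|^2)=m$ and $\E(|\grad Y(t,x)|^2)=m_2$ via
\[
\|f\,Y\|_{L^2(\Omega,L^2_x)}^2 = m\,\|f\|_{L^2_x}^2,\qquad \|f\,\grad Y\|_{L^2(\Omega,L^2_x)}^2 = m_2\,\|f\|_{L^2_x}^2 \qquad (f \text{ deterministic}),
\]
together with the Cauchy–Schwarz bounds $|\E(\overline Y Z)(x)| \leq \sqrt m\,\sqrt{\E|Z|^2(x)}$ and $|\E(\grad\overline Y\,Z)(x)| \leq \sqrt{m_2}\,\sqrt{\E|Z|^2(x)}$. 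Applying the Leibniz rule when $\grad$ hits a product $Y\cdot(\text{polynomial in }Z)$ and then integrating in time against $L^p_T L^\infty_x$ via Hölder yields contributions $CT(m+m_2)\,\|Z\|_{L^2(\Omega,\mathcal L_T)}$ for the linear pieces and $CT^{1-1/p}\sqrt{m+m_2}\,\|Z\|_{L^2(\Omega,\mathcal L_T)}^2$ for the quadratic ones.

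Summing the four pieces yields
\[
\|A(Z)\|_{L^2(\Omega,\mathcal L_T)} \leq C\|Z_0\|_{L^2(\Omega,H^1)} + CT(m+m_2)\|Z\|_{L^2(\Omega,\mathcal L_T)} + CT^{1-1/p}\sqrt{m+m_2}\,\|Z\|_{L^2(\Omega,\mathcal L_T)}^2 + CT^{1-2/p}\|Z\|_{L^2(\Omega,\mathcal L_T)}^3,
\]
and balancing each nonlinear term against $\|Z_0\|_{L^2(\Omega,H^1)}$ on the ball produces precisely the three upper bounds on $T$ stated in the proposition (the threshold $T\leq 1$ being used to absorb the $T^{1-1/p}, T^{1-2/p}\leq 1$ factors wherever needed). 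The same estimates applied to $F(Z_1)-F(Z_2)$, which expands polynomially in $Z_1$, $Z_2$ and $Y$, give the contraction on the ball, hence existence, uniqueness and the quantitative bound $\|Z\|_{L^2(\Omega,\mathcal L_T)}\leq C\|Z_0\|_{L^2(\Omega,H^1)}$. The main obstacle is the bookkeeping for the $Y$-terms: because $Y\notin H^1(\R^d)$ and is not localised, no pathwise $H^1$ control is available, so each $Y$ or $\grad Y$ has to be kept under an expectation in order to be replaced by the pointwise constants $m$ or $m_2$; keeping track of the Minkowski exchanges between $L^2(\Omega)$, $L^2_x$, $L^\infty_x$ and $L^p_T$ is the only substantial new ingredient compared to the spherical case, after which the fixed-point scheme closes in the standard way.
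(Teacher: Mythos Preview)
Your proposal is correct and follows essentially the same route as the paper: a contraction argument on $L^2(\Omega,\mathcal L_T)$ via Strichartz plus energy, with the nonlinearity estimated term by term using the pointwise constants $\E(|Y|^2)=m$, $\E(|\grad Y|^2)=m_2$ and Minkowski exchanges to keep every occurrence of $Y$ under an expectation. The only cosmetic difference is that you organise the nonlinear estimate by degree in $Z$ whereas the paper splits it into the $L^2$ part and the gradient part; the resulting bound and the choice of $T$ are the same.
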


\begin{proof} By the Duhamel formula, $Z$ is the fixed point of
$$
A(Z) = S(t) Z_0 + \int_{0}^t S(t-\tau) \Big(\Big( \E(|Z|^2) + 2\Re ( \E(\overline Y Z)) \Big) (Y+Z)\Big).
$$
We proceed with a contraction argument.

Thanks to \eqref{strR}, we have
$$
\|A(Z)\|_{\mathcal L_T} \leq C \Big( \|Z_0\|_{H^1} + \int_{0}^T \|\Big( \E(|Z|^2) + 2\Re ( \E(\overline Y Z)) \Big) (Y+Z)\|_{H^1}d\tau.
$$
Taking the $L^2$ norm in probability yields
$$
\|A(Z)\|_{L^2(\Omega,\mathcal L_T)} \leq C \Big( \|Z_0\|_{L^2(\Omega,H^1)} + \int_{0}^T \|\Big( \E(|Z|^2) + 2\Re ( \E(\overline Y Z)) \Big) (Y+Z)\|_{L^2(\Omega),H^1)}.
$$
We use the definition of the $L^2(\Omega),H^1)$ norm of $g$ such as the $L^2(\Omega \times \R^d)$ norm of $g$ added to the $L^2(\Omega \times \R^d)$ norm of $\grad g$.

For the part not containing any derivative, we start by taking the $L^2$ norm in probability, which yields
\begin{multline*}
\|\Big( \E(|Z|^2) + 2\Re ( \E(\overline Y Z)) \Big) (Y+Z)\|_{L^2(\Omega\times \R^d)} \leq \Big( \E(|Z|^2) + 2m \sqrt{\E(|Z|^2)} \Big)(m +\sqrt{\E(|Z|^2)} ) = \\
\Big( \|Z\|_{L^2(\Omega)}^2 + 2m  \|Z\|_{L^2(\Omega)}  \Big)(m + \|Z\|_{L^2(\Omega)} ).
\end{multline*}
Taking the $L^2$ norm in space yields
\begin{multline*}
\|\Big( \E(|Z|^2) + 2\Re ( \E(\overline Y Z)) \Big) (Y+Z)\|_{L^2(\Omega\times \R^d)} \leq \\
\Big( \|Z\|_{L^\infty(\R^d,L^2(\Omega))}\|Z\|_{L^2 (\R^d,L^2(\Omega))} + 2m  \|Z\|_{L^2 (\R^d,L^2(\Omega))}  \Big)(m + \|Z\|_{L^\infty(\R^d,L^2(\Omega))} ).
\end{multline*}
As a consequence of Minkowski's inequality, we have $\|\cdot \|_{L^\infty(\R^d, L^2(\Omega\times \R^d))} \leq \|\cdot \|_{ L^2(\Omega,L^\infty(\R^d))}$. Therefore,
\begin{multline*}
\int_{0}^T \|\Big( \E(|Z|^2) + 2\Re ( \E(\overline Y Z)) \Big) (Y+Z)\|_{L^2(\Omega\times \R^d)} d\tau \leq \\
C \|Z\|_{L^2(\Omega,\mathcal L_T)}\Big( T 2m^2 + T^{1-1/p}3m \|Z\|_{L^2(\Omega, \mathcal L_T)}+T^{1-2/p}\|Z\|_{L^2(\Omega, \mathcal L_T)}^2 \Big).
\end{multline*}

Let us deal with the part containing the derivatives. We look at the different terms under the integral. First, we differentiate
$$
\grad (\E(|Z|^2)Z) = E(|Z|^2)\grad Z + 2\Re (\E(\overline{\grad Z} Z)) Z
$$
and then we take the $L^2$ norm in probability, which yields
$$
\|\grad (\E(|Z|^2)Z)\|_{L^2(\Omega)} \leq 3 \|Z\|_{L^2(\Omega)}^2 \|\grad Z\|_{L^2(\Omega)}.
$$
For the other terms, we get
\begin{eqnarray*}
\|\grad(\E(|Z|^2) Y)\|_{L^2(\Omega)} &\leq &2 \sqrt m\|Z\|_{L^2(\Omega)} \|\grad Z\|_{L^2(\Omega)}+ \|Z\|_{L^2(\Omega)}^2 \sqrt{m_2},\\
\|\grad(2\Re(\E(\overline Y Z)) Z)\|_{L^2(\Omega)}& \leq & 2\sqrt{m_2} \|Z\|_{L^2(\Omega)}^2 +4\sqrt m \|Z\|_{L^2(\Omega)} \|\grad Z\|_{L^2(\Omega)}, \\
\|\grad(2\Re(\E(\overline Y Z)) Y)\|_{L^2(\Omega)}& \leq & 4\sqrt{mm_2} \|Z\|_{L^2(\Omega)} +2m  \|\grad Z\|_{L^2(\Omega)}.
\end{eqnarray*}
Summing up all these terms separating the ones containing derivatives of $Z$ and the other ones gives
\begin{eqnarray*}
\|\grad \Big(\Big( \E(|Z|^2) + 2\Re ( \E(\overline Y Z)) \Big) (Y+Z)\Big)\|_{L^2(\Omega)} & \leq & \|\grad Z \|_{L^2(\Omega} \Big( 3 \|Z\|_{L^2(\Omega)}^2 + 6\sqrt m \|Z\|_{L^2(\Omega)} + 2m\Big) \\
 & & + \|Z\|_{L^2(\Omega)} \Big( 3\sqrt{m_2} \|Z\|_{L^2(\Omega} + 4 \sqrt{mm_2} \Big).
\end{eqnarray*}

We remark that $\|Z\|_{L^2(\Omega \times \R^d)} \leq \|Z\|_{L^2(\Omega, H^1)}$. Hence, taking the $L^2$ norm in space in the previous inequality gives
\begin{multline*}
\|\grad \Big(\Big( \E(|Z|^2) + 2\Re ( \E(\overline Y Z)) \Big) (Y+Z)\Big)\|_{L^2(\Omega\times \R^d)} \leq \\
\|Z\|_{L^2(\Omega, H^1)} \Big( 3 \|Z\|_{L^2(\Omega,L^\infty(\R^d))}^2 + (6\sqrt m+3\sqrt{m_2})  \|Z\|_{L^2(\Omega,L^\infty(\R^d))}+2m+4\sqrt{mm_2}\Big).
\end{multline*}
Integrating in time yields
\begin{multline*}
\int_{0}^T \|\grad \Big(\Big( \E(|Z|^2) + 2\Re ( \E(\overline Y Z)) \Big) (Y+Z)\Big)\|_{L^2(\Omega\times \R^d)} d\tau \leq \\
\|Z\|_{L^2,\mathcal L_T}\Big( 3 T^{1-2/p} \|Z\|_{L^2(\Omega, \mathcal L_T)}^2+ (6\sqrt m+3\sqrt{m_2}) T^{1-1/p} \|Z\|_{L^2(\Omega, \mathcal L_T)} +(2m+4\sqrt{mm_2})T\Big).
\end{multline*}

Going back to $A(Z)$, we have the estimate
\begin{multline*}
\|A(Z)\|_{L^2(\Omega,\mathcal L_T)}\leq C' \Big( \|Z_0\|_{L^2(\Omega,H^1)}+ \|Z\|_{L^2(\Omega,\mathcal L_T)}\Big(T(4m+4\sqrt{mm_2})  + \\
T^{1-1/p}(9\sqrt m+3\sqrt{m_2}) \|Z\|_{L^2(\Omega,\mathcal L_T)}+4T^{1-2/p}\|Z\|_{L^2(\Omega,\mathcal L_T)}^2\Big)\Big).
\end{multline*}

In conclusion if $\|Z\|_{L^2(\Omega,\mathcal L_T)} \leq 2C' \|Z_0\|_{L^2(\Omega,H^1)}$ then with 
$$
T =  \min \Big(1,\frac1{C(m+m_2)},\frac1{C(\sqrt{m+m_2})\|Z_0\|_{L^2(\Omega,H^1)})^{p/(p-1)}},\frac1{C \|Z_0\|_{L^2(\Omega, H^1)}^{(2p)/(p-2)}}\Big)
$$ 
for a constant $C$ big enough, we have
$$
\|A(Z)\|_{L^2(\Omega,\mathcal L_T)}\leq 2C' \|Z_0\|_{L^2(\Omega,H^1)}
$$
which means that the ball of $L^2(\Omega,\mathcal L_T)$ of radius $2C' \|Z_0\|_{L^2(\Omega,H^1)}$ is stable under $A$.

For the same reasons, we get that $A$ is contracting for appropriate times, which concludes the proof.\end{proof}

\subsection{Global well-posedness in the energy space for \texorpdfstring{$d\leq 3$}{d<=3}}

\begin{proposition}\label{prop-gwp} The equation \eqref{eqonZ} is globally well-posed in $H^1$. \end{proposition}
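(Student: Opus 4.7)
The proof follows the energy method of Proposition \ref{prop-gwpS}, suitably modified to handle the non-localized background $Y$. Since the local existence time in Proposition \ref{prop-lwp} depends only on $\|Z_0\|_{L^2(\Omega, H^1)}$ together with the fixed quantities $m$ and $m_2$, it suffices to exhibit a coercive conserved quantity for \eqref{eqonZ} that controls $\|Z(t)\|_{L^2(\Omega, H^1(\R^d))}$, and then iterate local existence on consecutive time intervals.

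The natural candidate is the Hamiltonian of \eqref{eqonZ},
$$
\mE(Z) = \frac{1}{2}\int_{\R^d}\E(|\grad Z|^2)\,dx + \frac{m}{2}\int_{\R^d}\E(|Z|^2)\,dx + \frac{1}{4}\int_{\R^d}\bigl[\E(|Z|^2) + 2\Re\E(\overline Y Z)\bigr]^2\,dx.
$$
The bracket equals $\E(|X|^2)-m$ for $X = Y + Z$, so $\mE$ should be read as a formal subtraction of the (divergent) energy of the background from the Hartree energy of $X$. This quantity is finite on $L^2(\Omega, H^1(\R^d))$: the mixed term is controlled by Cauchy--Schwarz, $(2\Re\E(\overline Y Z))^2 \leq 4m\,\E(|Z|^2)$, and $\int (\E(|Z|^2))^2\,dx \leq \|Z\|_{L^2(\Omega, L^4)}^4 \lesssim \|Z\|_{L^2(\Omega, H^1)}^4$ via Minkowski's inequality and the Sobolev embedding $H^1(\R^d)\hookrightarrow L^4(\R^d)$ in dimensions $d\leq 3$. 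When $m > 0$ the two quadratic terms give the coercivity $\mE(Z) \geq \tfrac12\min(1,m)\,\|Z\|_{L^2(\Omega, H^1)}^2$, the nonlinear term being non-negative. When $m = 0$ we have $Y \equiv 0$, \eqref{eqonZ} reduces to \eqref{eqonrv}, and the $L^2$ part of the norm is controlled by the independently conserved mass $\int\E(|Z|^2)\,dx$.

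The key step is the identity $\partial_t \mE(Z(t)) = 0$ along solutions. Formally this is the Hamiltonian identity for \eqref{eqonZ}: differentiating each piece, substituting the right-hand side of the equation for $i\partial_t Z$, using the self-adjointness of $-\lap$, the commutation of $\E$ with $\partial_t$, and the crucial fact that $Y$ itself solves the \emph{linear} equation $i\partial_t Y = -\lap Y + mY$ (since $\E(|Y|^2)\equiv m$), one finds that all the surviving terms collapse to purely imaginary real parts and therefore vanish. The genuine difficulty, and the main obstacle of the proof, lies in the rigorous justification of this computation: because $Y \in H^1_{\mathrm{loc}}\setminus H^1$, neither the pointwise quantities $\E(\overline Y Z)(x)$ and $\E(\overline Y\,\grad Z)(x)$ nor the products $WY$ appearing in the equation are directly amenable to calculus manipulations when $Z$ lies only in $L^2(\Omega, H^1)$. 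My plan to resolve this is a double approximation: truncate $Y$ in Fourier (replacing $f(k)$ by $f(k)\chi_{|k|\leq N}$, so that $Y^N$ is smooth in $x$ with $\E(|Y^N|^2)$ still constant), and approximate $Z_0$ by smoother data $Z_0^n \in L^2(\Omega, H^s)$ with $s > 1$. A persistence-of-regularity statement analogous to Remark \ref{rem-persistencehireg}, obtained by bootstrapping the fixed-point argument of Proposition \ref{prop-lwp} in $L^2(\Omega, H^s)$-valued Strichartz spaces, ensures that the approximate solutions are regular enough to make the energy identity rigorous. One then passes to the limit first in $N$ and then in $n$, invoking the continuous dependence of the flow on both the data $Z_0$ and the background $Y$.

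Once $\mE$ is shown to be conserved, coercivity yields the uniform bound $\|Z(t)\|_{L^2(\Omega, H^1)} \leq C(m, m_2, \|Z_0\|_{L^2(\Omega, H^1)})$, and Proposition \ref{prop-lwp} can be applied on successive intervals of uniform length to produce a solution on all of $\R$, giving global well-posedness.
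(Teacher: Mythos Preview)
Your proposed functional $\mE(Z)$ is coercive, but it is \emph{not} conserved; the claim that ``all the surviving terms collapse to purely imaginary real parts and therefore vanish'' is incorrect. Carrying out the differentiation you sketch gives
\[
\partial_t\mE(Z)=\Re\int_{\Omega\times\R^d}\overline{\partial_t Z}\,[(-\lap+m)Z+WX]+\Re\int_{\Omega\times\R^d}\overline{\partial_t Y}\,WX
=\Re\int_{\Omega\times\R^d}\overline{\partial_t Y}\,WX,
\]
using $i\partial_t Z=(-\lap+m)Z+WX$ and $\Re(\overline{\partial_t Z}\,i\partial_t Z)=0$. Since $i\partial_t Y=(-\lap+m)Y$, this equals
\[
\partial_t\mE(Z)=-\Im\int_{\R^d}W\,\E\big((-\lap+m)\bar Y\cdot Z\big)\,dx,
\]
and there is no reason for this to vanish: the factor $\E((-\lap+m)\bar Y\cdot Z)$ is not real in general. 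The underlying issue is that the true relative energy $\mE_X-\mE_Y$ contains, in addition to your $\mE(Z)$, the cross term $\Re\int\E\big(\bar Y(-\lap+m)Z\big)$, which is ill-defined because $Y\notin L^2$; throwing it away destroys exact conservation.

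The paper does not attempt to find a conserved quantity. It builds a modified energy $\mE=A+B+D+2mE$ (in the notation there), checks coercivity $\mE\ge A+\tfrac12 B$, computes $\partial_t(A+B+D)$ explicitly, and estimates each surviving term by $C(m,m_2)\mE$; after adding the mass term $2mE$ and bounding $\partial_tE$ similarly, one gets $|\partial_t\mE|\le C(m,m_2)\mE$ and concludes by Gronwall. Your $\mE(Z)$ could be used in the same way --- the residual term above is bounded by $C(m,m_2)\mE(Z)$ --- but then the argument becomes a Gronwall estimate, not a conservation law, and the global bound on $\|Z(t)\|_{L^2(\Omega,H^1)}$ is exponential in $t$ rather than uniform.
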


\begin{proof}We proceed with a modified energy method. Let
\begin{eqnarray*}
A &=& \frac12 \int_{\Omega\times \R^d} \overline Z (m-\lap )Z \\
B &=& \frac14 \int_{\R^d} \E(|Z|^2)^2\\
D &=& \int_{\R^d} \E(|Z|^2) \Re \E(\overline Z Y).
\end{eqnarray*}

Differentiating these quantities in time yields
\begin{eqnarray*}
\partial_t A &=& \Re \int_{\Omega\times \R^d} \partial_t \overline Z (m-\lap) Z \\
\partial_t B &=& \Re \int_{\Omega\times \R^d} \partial_t \overline Z \E(|Z|^2) Z\\
\partial_t D &=& \Re \int_{\Omega\times \R^d} \partial_t\Big( Z 2\Re \E(\overline Z Y) +Y \E(|Z|^2)\Big) + \Re\int_{\R^d}\E(|Z|^2) \E(\overline Z \partial_t Y).
\end{eqnarray*}
We deduce from that
$$
\partial_t (A+B+D) = \Re \int_{\Omega\times \R^d} \partial_t \overline Z \Big( i\partial_t Z - Y 2\Re \E(\overline Z Y)\Big)+\Re\int_{\R^d}\E(|Z|^2) \E(\overline Z \partial_t Y).
$$
Because of the real part we get
$$
\Re \int_{\Omega\times \R^d} \partial_t \overline Z \Big( i\partial_t Z - Y 2\Re \E(\overline Z Y)\Big)= -\Re \int_{\Omega\times \R^d} \partial_t \overline Z  Y 2\Re \E(\overline Z Y)
$$
and replacing $\partial_t \overline Z$ by its value
$$
-\Re \int_{\Omega\times \R^d} \partial_t \overline Z  Y 2\Re \E(\overline Z Y)= -\Re \int_{\Omega\times \R^d}i \Big((m-\lap) \overline Z + (E(|Z|^2 + 2\Re (\overline Z Y))(\overline{Z+Y})\Big) Y 2\Re \E(\overline Z Y)
$$
and again because of the real part
$$
-\Re \int_{\Omega\times \R^d} \partial_t \overline Z  Y 2\Re \E(\overline Z Y)= -\Re \int_{\Omega\times \R^d}i \Big((m-\lap) \overline Z + (E(|Z|^2 + 2\Re (\overline Z Y))\overline{Z}\Big) Y 2\Re \E(\overline Z Y).
$$
Returning to $A,B,D$, we get
$$
\partial_t (A+B+D) = -\Re \int_{\Omega\times \R^d}i \Big((m-\lap) \overline Z + (E(|Z|^2 + 2\Re (\overline Z Y))\overline{Z}\Big) Y 2\Re \E(\overline Z Y) + \Re\int_{\R^d}\E(|Z|^2) \E(\overline Z \partial_t Y).
$$
We estimate the different terms of the sum, we have 
\begin{eqnarray*}
\Big|-\Re \int_{\Omega\times \R^d}i Y 2\Re \E(\overline Z Y)(m-\lap) \overline Z\Big| &\leq &C(m,m_2) A\\
\Big|-\Re \int_{\Omega\times \R^d}i Y 2\Re \E(\overline Z Y)\E(|Z|^2)\overline{Z}\Big| & \leq & C(m) B\\
\Big|-\Re \int_{\Omega\times \R^d}i Y 2\Re \E(\overline Z Y)2\Re (\overline Z Y)\overline{Z}\Big| & \leq & C(m)A^{1/2}B^{1/2}\\
\Big| \Re\int_{\R^d}\E(|Z|^2) \E(\overline Z \partial_t Y)\Big| & \leq & C(m,m_2) A^{1/2}B^{1/2}.
\end{eqnarray*}

 The problem with this method is that $A+B+D$ does not control the $H^1$ norm. For this, we set
$$
E = \frac12 \int_{\Omega\times \R^d}|Z|^2.
$$
We have 
$$
|D| \leq \sqrt m B^{1/2}E^{1/2} \leq \frac12 B + 2m E.
$$
Hence setting $\mE = A+B+D+2m E$, we get $\mE \geq A + \frac12 B$. We prove now that $|\partial_t \mE| \leq C \mE$. Because of the previous computations 
$$
|\partial_t (A+B+D)|\leq C(m,m_2)\mE.
$$
We compute the derivative of $E$. We have
$$
\partial_t E = \Im \int_{\Omega\times \R^d} \overline Z i\partial_t Z = \Im \int_{\Omega\times \R^d} \overline Z \Big((m-\lap)  Z + (E(|Z|^2 + 2\Re (\overline Z Y))(Z+Y)\Big)
$$
and because of the imaginary part
$$
\partial_t E =\Im \int_{\Omega\times \R^d} \overline Z(E(|Z|^2 + 2\Re (\overline Z Y))Y.
$$
We get
$$
|\partial_t E| \leq \sqrt m E^{1/2}B^{1/2} + mE\leq C(m)(A^{1/2}B^{1/2} + A)\leq C(m) \mE.
$$
In conclusion, we get a bound for $\mE$ and thus for $A$, the $L^2(\Omega,H^1)$ norm of the solution, which implies global existence.
\end{proof}

\subsection{Local well-posedness in dimension 4 for small initial data}

In this subsection, we prove local well posedness for small initial data in $H^1$ in dimension $4$. We use a contraction argument in 
$$
\L_T = L^2(\Omega,\mathcal C ([-T,T],H^1(\R^4)))  \cap L^2(\Omega,L^3([-T,T],W^{1,3}(\R^4))).
$$
Thanks to Strichartz estimates, there exists $C$ such that for all $T \geq 0$, and all $g \in L^2(\Omega, H^1(\R^4))$, we have 
\begin{equation}\label{striR4}
\|S(t)g\|_{\L_T} \leq C\|g\|_{L^2(\Omega, H^1(\R^4))}.
\end{equation}

\begin{proposition} There exists $\varepsilon > 0$  such that for all $Z_0$ satisfying $\|Z_0\|_{L^2(\Omega, H^1(\R^4))} \leq \varepsilon$, the equation \eqref{eqonZ} admits a unique solution $Z$ in $\L_T$ for 
$$
T = \min\Big( \frac1{C(\sqrt m + \sqrt{m_2})^3} , \frac1{C(m+m_2)}\Big)
$$
with $C$ big enough.
Besides there exists $C$ such that 
$$
\|Z\|_{\L_T} \leq 2C\varepsilon
$$
and $Z$ depends continuously in $Z_0$.
\end{proposition}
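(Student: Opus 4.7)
The plan is to run a contraction argument for the Duhamel map
\[
A(Z)(t) = S(t) Z_0 - i \int_0^t S(t-\tau) \bigl[(\E(|Z|^2) + 2\re\E(\overline Y Z))(Y+Z)\bigr](\tau)\, d\tau
\]
on a ball of $L^2(\Omega, \L_T)$, mirroring Proposition~\ref{prop-lwp} but now at the energy-critical scale of cubic NLS on $\R^4$. The Strichartz estimate~\eqref{striR4} and its inhomogeneous dual counterpart (available since $(3,3)$ is Schr\"odinger-admissible in dimension four) control $\|A(Z)\|_{L^2(\Omega,\L_T)}$ by $\|Z_0\|_{L^2(\Omega,H^1)}$ plus the norm of the nonlinearity $F(Z) = (\E(|Z|^2) + 2\re\E(\overline Y Z))(Y+Z)$ in an appropriate dual Strichartz space, such as $L^2(\Omega, L^1_T H^1_x)$ for the $L^\infty_t H^1$ piece of $\L_T$ and $L^2(\Omega, L^{3/2}_T W^{1, 3/2}_x)$ for the $L^3_t W^{1,3}$ piece. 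I then split $F(Z)$ into its four summands and estimate each one.

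For the three summands involving at least one factor of $Y$, I take the $L^2(\Omega)$ norm first, using $\E(|Y|^2) = m$, $\E(|\grad Y|^2) = m_2$ and Cauchy--Schwarz in $\Omega$; then I push $L^2(\Omega)$ inside the spatio-temporal Lebesgue norms by Minkowski and close with the Sobolev embeddings $H^1(\R^4) \hookrightarrow L^4(\R^4)$ and $W^{1,3}(\R^4) \hookrightarrow L^{12}(\R^4)$. Each of these terms is controlled by $T^{\alpha_i} P_i(\sqrt m, \sqrt{m_2}) \|Z\|_{L^2(\Omega, \L_T)}^{k_i}$ with $\alpha_i > 0$ and $k_i \in \{0,1,2\}$, and the constraints $T \le C^{-1}(m+m_2)^{-1}$ and $T \le C^{-1}(\sqrt m + \sqrt{m_2})^{-3}$ in the statement are tuned precisely to absorb these contributions into a small quantity.

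The key summand is the purely cubic $\E(|Z|^2) Z$. In $\R^4$ the cubic nonlinearity is scale-invariant, so no factor of $T$ is available here. After the product rule $\grad(\E(|Z|^2) Z) = 2\re\E(\overline Z \grad Z)\,Z + \E(|Z|^2) \grad Z$, each piece is estimated in $L^{3/2}_T L^{3/2}_x$ using H\"older in space and time together with $W^{1,3}(\R^4)\hookrightarrow L^{12}(\R^4)$ and the identity $\|\E(|Z|^2)\|_{L^3_x} = \|Z\|_{L^6_x L^2_\Omega}^2$ combined with the Minkowski bound $\|Z\|_{L^6_x L^2_\Omega} \le \|Z\|_{L^2_\Omega L^6_x}$ (in the spirit of Lemma~\ref{lem-trilin}). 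This produces a clean estimate $\lesssim \|Z\|_{L^2(\Omega,\L_T)}^3$ \emph{with no power of $T$}, reflecting the energy-critical nature of the equation. Summing over the four terms yields
\[
\|A(Z)\|_{L^2(\Omega,\L_T)} \le C' \|Z_0\|_{L^2(\Omega, H^1)} + C' \|Z\|_{L^2(\Omega,\L_T)}^3 + C'(m,m_2)\, T^\alpha\bigl(1 + \|Z\|_{L^2(\Omega,\L_T)} + \|Z\|_{L^2(\Omega,\L_T)}^2\bigr),
\]
and a parallel computation writing $\E(|Z_1|^2) Z_1 - \E(|Z_2|^2) Z_2 = \E(|Z_1|^2)(Z_1-Z_2) + \E(Z_1 \overline{(Z_1-Z_2)}) Z_2 + \E(\overline{Z_2}(Z_1-Z_2)) Z_2$ (together with analogous decompositions for the $Y$-terms) delivers the corresponding Lipschitz estimate.

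The statement's choice of $T$ makes the $C'(m,m_2) T^\alpha(\cdots)$ piece small, and then choosing $\varepsilon$ so small that $C'(2C'\varepsilon)^2 < 1/2$ turns $A$ into a strict contraction on the closed ball of $L^2(\Omega, \L_T)$ of radius $2C'\varepsilon$, giving existence, uniqueness and the Lipschitz dependence on $Z_0$. The main obstacle is precisely the cubic-in-$Z$ term: four dimensions is exactly energy-critical for the cubic nonlinearity, so no power of $T$ can be extracted, and the smallness hypothesis on $\|Z_0\|_{L^2(\Omega, H^1)}$ is unavoidable; the probabilistic layer is a purely technical extra, handled throughout by Minkowski interchanges placing $L^2(\Omega)$ inside the relevant mixed Lebesgue norms.
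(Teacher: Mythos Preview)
Your proposal is correct and follows essentially the same route as the paper: a contraction argument for the Duhamel map on a ball of $L^2(\Omega,\L_T)$, splitting the nonlinearity into its four summands, handling the $Y$-terms via $\E(|Y|^2)=m$, $\E(|\grad Y|^2)=m_2$ and Minkowski, and crucially using the Sobolev embedding $W^{1,3}(\R^4)\hookrightarrow L^{12}(\R^4)$ so that the pure cubic term yields $\|Z\|_{L^2(\Omega,\L_T)}^3$ with no power of $T$, forcing the smallness hypothesis on $\varepsilon$. The only cosmetic difference is that the paper places the entire nonlinearity in the single dual space $L^1_T L^2(\Omega,H^1_x)$ rather than also invoking $L^{3/2}_T W^{1,3/2}_x$, but the estimates and the conclusion are identical.
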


\begin{proof} Let 
$$
A(Z) = S(t)Z_0 - i\int_{0}^t S(t-\tau) \Big( (\E(|Z|^2) + 2\Re \E(\overline Y Z))(Y+Z)\Big) d\tau.
$$
The solution $Z$ is the fixed point of $A$. We have, thanks to \eqref{striR4},
$$
\|A(Z)\|_{\L_T} \leq C \int_{-T}^T \big\| (\E(|Z|^2) + 2\Re \E(\overline Y Z))(Y+Z)\big\|_{L^2(\Omega, H^1)}
$$
which yields by a triangle inequality 
\begin{multline*}
\|A(Z)\|_{\L_T} \leq C  \Big( \|Z\|_{L^2(\Omega,H^1(\R^4))} +\int_{-T}^T \Big( \|\E(|Z|^2)Z\|_{L^2(\Omega, H^1)}+\|\E(|Z|^2)Y\|_{L^2(\Omega, H^1)} + \\
\|2\Re \E(\overline Y Z))Z\|_{L^2(\Omega, H^1)} +\|2\Re \E(\overline Y Z))Y\|_{L^2(\Omega, H^1)}\Big)d\tau \Big).
\end{multline*}
We have since $\|fg\|_{H^s} \lesssim \|(D^sf)g\|_{L^2}  + \|fD^sg\|_{L^2}$,
$$
\|\E(|Z|^2)Z\|_{L^2(\Omega, H^1)} \lesssim \|\,\|Z\|_{L^2(\Omega)}^2 \|DZ\|_{L^2(\Omega)} \|_{L^2(\R^4)}.
$$
Thanks to H\"older inequality, as $\frac13 + \frac16 = \frac12$, 
$$
\|\E(|Z|^2)Z\|_{L^2(\Omega, H^1)} \lesssim \|Z\|_{L^{12}(\R^4,L^2(\Omega))}^2\|DZ\|_{L^3(\R^4,L^2(\Omega))} 
$$
and as $12$ and $3$ are bigger than $2$, we can exchange the order of the norms,
$$
\|\E(|Z|^2)Z\|_{L^2(\Omega, H^1)} \lesssim \|Z\|_{L^2(\Omega,L^{12}(\R^4))}^2\|DZ\|_{L^2(\Omega,L^3(\R^4))} 
$$
and since $W^{1,3}(\R^4)$ is embedded in $L^{12}(\R^4)$,
$$
\|\E(|Z|^2)Z\|_{L^2(\Omega, H^1)} \lesssim \|Z\|_{L^2(\Omega,W^{1,3}(\R^4))}^3.
$$
Integrating in time yields
\begin{equation} \label{termcub}
\int_{-T}^T\|\E(|Z|^2)Z\|_{L^2(\Omega, H^1)} \lesssim \|Z\|_{\L_T}^3.
\end{equation}

Using that $\E(|Y|^2) = m$ and $\E(|DY|^2) = m+m_2$, we get for the quadratic terms
$$
\|\E(|Z|^2)Y\|_{L^2(\Omega,H^1(\R^4))} + \|2\Re \E(\overline Y Z))Z\|_{L^2(\Omega, H^1)} \lesssim (\sqrt m + \sqrt{m_2}) \|Z\|_{L^2(\Omega,L^6(\R^4))} \|Z\|_{L^2(\Omega, W^{1,3}(\R^4))}
$$
and as $W^{1,3}(\R^4)$ is embedded in $L^6(\R^4)$ and integrating in time, we get
\begin{equation} \label{termquad}
\int_{-T}^T\|\E(|Z|^2)Y\|_{L^2(\Omega,H^1(\R^4))} + \|2\Re \E(\overline Y Z))Z\|_{L^2(\Omega, H^1)} \lesssim (\sqrt m + \sqrt{m_2})T^{1/3}\|Z\|_{\L_T}^2.
\end{equation}

For the linear term we have 
$$
\|2\Re \E(\overline Y Z))Y\|_{L^2(\Omega, H^1)} \lesssim (m+m_2) \|Z\|_{L^2(\Omega, H^1)}
$$
which gives
\begin{equation} \label{termlin}
\int_{-T}^T \|2\Re \E(\overline Y Z))Y\|_{L^2(\Omega, H^1)} \lesssim ( m + m_2)T\|Z\|_{\L_T}.
\end{equation}

Summing \eqref{termcub}, \eqref{termquad}, \eqref{termlin}, we get
$$
\|A(Z)\|_{\L_T} \leq C \Big( \|Z\|_{L^2(\Omega,H^1(\R^4))}+\|Z\|_{\L_T}^3+(\sqrt m + \sqrt{m_2})T^{1/3}\|Z\|_{\L_T}^2+( m + m_2)T\|Z\|_{\L_T}\Big).
$$
Assuming that $\|Z\|_{L^2(\Omega,H^1(\R^4))} \leq \varepsilon$ with $\varepsilon$ such that
$$
2C \varepsilon \leq \frac1{2\sqrt C}
$$
and assuming 
$$
T = \min\Big( \frac1{8^3(\sqrt m + \sqrt{m_2})^3} , \frac1{8(m+m_2)}\Big),
$$
we get that the ball of $\L_T$ of radius $2C\varepsilon$ is stable under the map $A$.

What is more, for the same reasons, we get
\begin{multline*}
\|A(Z_1) - A(Z_2)\|_{\L_T} \leq C\Big( \|Z_1\|_{\L_T}^2 + \|Z_2\|_{\L_T}^2+\\
(\sqrt m + \sqrt{m_2})T^{1/3}(\|Z_1\|_{\L_T}+\|Z_2\|_{\L_T})+( m + m_2)T\Big)\|Z_1-Z_2\|_{\L_T}\Big).
\end{multline*}
Hence, for 
$$
T = \min\Big( \frac1{C(\sqrt m + \sqrt{m_2})^3} , \frac1{C(m+m_2)}\Big)
$$
with $C$ big enough and $\varepsilon$ small enough, we get that $A$ is contracting, which ensures existence and uniqueness of the fix point.

Finally, if $Z^1$ is the solution of \eqref{eqonZ} with initial datum $Z_0^1$ in the ball of radius $\varepsilon$, we have
$$
Z^1 = S(t)(Z^1_0 - Z_0) + A(Z^1)
$$
thus
$$
Z^1 - Z = S(t)(Z^1_0 - Z_0) + A(Z^1) - A(Z)
$$
and as $A$ is contracting,
$$
\|Z^1 - Z\|_{\L_T} \lesssim \|Z_0^1 - Z_0\|_{L^2(\Omega,H^1)}.
$$
\end{proof}

\section{Scattering and non-existence of localised equilibrium}\label{sec-scatt}

By copying the method of Lewin and Sabin in \cite{lewsabII}, it may be possible to prove scattering properties for the perturbed Hartree equation : 
$$
i\partial_t Z = (m-\lap)Z + w*\Big( \E(|Z|^2) + 2\Re \E(\overline Y Z)\Big)(Y+Z)
$$
with $w$ smooth enough. Scattering for the perturbed NLS \eqref{eqonZ} remains an open problem.

Nevertheless, one can prove scattering properties for \eqref{eqonrv}.

\subsection{Scattering for the defocusing equation}

We now prove scattering in $\R^3$.

We use Morawetz estimates in the spirit of \cite{GinibreVeloScat} and \cite{TzvVisScat}. We mention \cite{scattsyst} about scattering for a system of Schr\"odinger equations.

We follow the proof for decay estimates and scattering in \cite{taobook} from page 67 and onward. Because the computation for the linear part of the equation is the same up to constants, we will not insist on it and focus on the main difference, which is the non linearity.

\begin{proposition}\label{prop-scat} The equation \eqref{eqonrv} scatters in the sense that for all initial datum $X_0$ in \\ $L^2(\Omega,H^1(\R^3))$ there exists $X_{\pm \infty} \in L^2(\Omega,H^1(\R^3))$ such that
$$
\|X(t) - S(t) X_{\pm \infty}\|_{H^1(\R^3)} \rightarrow 0
$$
when $t$ goes to $\pm \infty$. By $X(t)$ we denote the solution of \eqref{eqonrv} with initial datum $X_0$ and by $S(t)$ the flow of the linear equation $\partial_t Z = -\lap Z$.
\end{proposition}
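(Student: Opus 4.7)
The plan is to transpose the standard defocusing scattering theory for cubic NLS on $\R^3$ (Ginibre--Velo, and the CKSTT interaction Morawetz approach presented in \cite{taobook}) to the random-variable setting. The key structural observation is that the nonlinear ``potential'' $V := \E(|X|^2)$ depends only on $(t,x)$ and not on $\omega$, is non-negative, and plays exactly the role of $|u|^2$ in the deterministic defocusing cubic NLS. Global well-posedness of \eqref{eqonrv} in $L^2(\Omega,H^1(\R^3))$ will be obtained by the same energy argument as in Section \ref{sec-WPST} together with Proposition \ref{prop-lwp} applied to $Y = 0$; in particular the energy $\mE(X) = \frac12 \E \int |\grad X|^2 + \frac14 \int \E(|X|^2)^2$ and the mass $\E \int |X|^2$ are conserved, giving uniform control of $\|X(t)\|_{L^2(\Omega,H^1)}$.

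Next, I would derive a Morawetz estimate. For
$$
M(t) = 2 \Im \E \int_{\R^3} \overline{X}(t,x) \frac{x}{|x|} \cdot \grad X(t,x)\, dx,
$$
substituting $i\partial_t X = -\lap X + V X$ yields the usual non-negative linear contribution plus a nonlinear contribution equal, after integration by parts in $\R^3$, to
$$
-2 \int_{\R^3} V\, \grad V \cdot \frac{x}{|x|}\, dx = -\int_{\R^3} \grad(V^2) \cdot \frac{x}{|x|}\, dx = 2 \int_{\R^3} \frac{V^2}{|x|}\, dx,
$$
formally identical to the deterministic case. Using $|M(t)| \lesssim \mE(X_0)$, time-integration gives the Lin--Strauss-type bound $\int_\R \int_{\R^3} V^2/|x|\, dx\, dt \lesssim \mE(X_0)$. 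Alternatively, writing the interaction Morawetz identity with two \emph{independent} copies of $\omega$ (so that the two-point density factorises as $V(x) V(y)$) reduces the computation to the deterministic CKSTT derivation and produces the unweighted bound $\int_\R \int_{\R^3} \E(|X|^2)^2\, dx\, dt < \infty$, i.e. $X \in L^4_t L^4(\Omega \times \R^3_x)$.

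I would then bootstrap this global spacetime control into a full Strichartz bound. Splitting $\R$ into finitely many intervals on each of which $\|X\|_{L^4_t L^4(\Omega,L^4_x)}$ lies below a universal threshold, on each subinterval a Duhamel contraction as in the proof of Proposition \ref{prop-lwp} (with $Y = 0$, with an extra admissible Strichartz pair, and with the a priori bound inserted) is performed; the probability/space bookkeeping is the Minkowski exchange $L^2(\Omega,L^p_x) \hookrightarrow L^p_x(L^2(\Omega))$ for $p \geq 2$ used throughout Section \ref{sec-WPST}. Patching the intervals yields a global Strichartz bound for $X$. The scattering states are then obtained by showing that $t \mapsto S(-t) X(t)$ is Cauchy in $L^2(\Omega,H^1(\R^3))$: its derivative is $-i\, S(-t)(VX)$, and the Duhamel integral of this quantity between two large times is controlled by the tails of the global Strichartz norm of $X$, which tend to zero.

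The main obstacle I anticipate is the interaction Morawetz step. The natural object to differentiate in the random setting is not the ``genuine'' two-point density $\E(|X(x)|^2 |X(y)|^2)$ but rather the factorised quantity $V(x) V(y)$, which sees only the self-consistent potential $V$ that the equation actually uses. Setting up the Morawetz action with two independent copies of the probability space makes the bilinear structure correct, and then the computation is a faithful transcription of the deterministic one. The remaining Strichartz bootstrap and the passage to scattering states are routine once the global $L^4_{t,x}$ control is in hand.
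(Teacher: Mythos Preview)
Your proposal is correct and follows essentially the same route as the paper. The paper derives the local conservation laws for $T_{00}=\E(|X|^2)$ and $T_{j0}=-2\Im\E(\overline X\partial_{x_j}X)$, obtains the centered Morawetz identity, then multiplies the identity translated to $y$ by $\E(|X(y)|^2)$ and integrates in $y$; this is exactly your ``two independent copies of $\omega$'' factorisation $V(x)V(y)$, and yields $\int_{\R\times\R^3}\E(|X|^2)^2\,dx\,dt<\infty$. The paper then makes the Strichartz bootstrap explicit via the space $\L_I=L^{10}(I\times\R^3)\cap L^{10/3}(I,W^{1,10/3}(\R^3))$, interpolating the $L^5_{t,x}$ norm between $L^4$ and $L^{10}$, and splits $\R$ into finitely many intervals where the $L^4$ norm is small --- precisely your scheme. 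One small notational slip: the interaction Morawetz gives $X\in L^4(\R,L^4(\R^3,L^2(\Omega)))$, i.e.\ $\|\E(|X|^2)\|_{L^2_{t,x}}<\infty$, not $X\in L^4_t L^4(\Omega\times\R^3_x)$; your displayed integral is the correct quantity.
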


We start with decay estimates.

\begin{lemma} \label{lem-interMoraw} With the notations of the previous proposition we have that $X(t)$ belongs to $L^4(\R\times \R^3 , L^2(\Omega))$. In other terms, the quantity
$$
\int_{\R} dt \int_{\R^3} dx \E(|X(t,x)|^2)^2
$$
is finite.
\end{lemma}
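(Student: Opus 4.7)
The plan is to adapt the interaction Morawetz estimate for defocusing cubic NLS on $\R^3$ (see \cite{GinibreVeloScat, TzvVisScat} and Tao's book, Chapter 3) to the random variable setting. The essential observation is that both the mass density $\rho(t,x) = \E(|X(t,x)|^2)$ and the momentum current $\vec j(t,x) = 2\,\E\bigl(\Im(\overline X\,\nabla X)\bigr)(t,x)$ are deterministic functions of $(t,x)$, and that \eqref{eqonrv} is, pathwise in $\omega$, a linear Schr\"odinger equation with the deterministic potential $\rho$. Consequently, the algebraic framework of the deterministic theory applies directly to $\rho$ and $\vec j$, with the role of $|u|^2$ played by $\rho$ and that of $|u|^4$ played by $\rho^2$.

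I would first establish two conservation laws by differentiating $\rho$ and $\vec j$ and using \eqref{eqonrv} as in the proof of Proposition~\ref{XtoG}: the continuity equation $\partial_t\rho + \mathrm{div}\,\vec j = 0$ and the momentum balance
$$
\partial_t j_k \;=\; 2\,\partial_l\,\E\bigl(\Re(\partial_k X\,\overline{\partial_l X})\bigr) \;-\; \tfrac12\partial_k\lap\rho \;-\; \partial_k(\rho^2),
$$
in which $\partial_k(\rho^2)$ comes from the nonlinearity $\rho X$ and replaces the $\partial_k(|u|^4)/2$ of deterministic cubic NLS. Next, I would define the interaction Morawetz functional
$$
M(t) \;=\; \int_{\R^3}\int_{\R^3}\rho(t,y)\,\frac{x-y}{|x-y|}\cdot\vec j(t,x)\,dx\,dy
$$
and compute $dM/dt$ via these two identities and integration by parts. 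Symmetrizing in $(x,y)$ and using the random-variable version of the Madelung positivity
$$
\E\bigl(\Re(\partial_k X\,\overline{\partial_l X})\bigr)(x) \;-\; \frac{j_k(x)\,j_l(x)}{4\,\rho(x)} \;\geq\; 0 \qquad \text{as a matrix in }(k,l),
$$
which follows from Cauchy-Schwarz in $\omega\in\Omega$, one shows, exactly as in Tao's book, that the kinetic part of $dM/dt$ is nonnegative. The nonlinear piece, combined with the identity $\mathrm{div}_x\bigl(\tfrac{x}{|x|}\bigr) = \tfrac{2}{|x|}$, then gives after a further integration by parts the lower bound
$$
\frac{dM}{dt} \;\geq\; c\int_{\R^3}\rho(t,x)^2\,dx.
$$

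To conclude, $|M(t)|$ is uniformly bounded in $t$ by $\|\rho(t)\|_{L^1}\|\vec j(t)\|_{L^1} \lesssim \|X(t)\|_{L^2(\Omega,L^2)}^3\,\|X(t)\|_{L^2(\Omega,H^1)}$, both of these norms being conserved by the flow (the $L^2(\Omega,L^2)$ norm by self-adjointness of $-\lap+\rho$; the $L^2(\Omega,H^1)$ norm by conservation of the energy $\mE$, which extends to $\R^3$ by the same argument as in Section~\ref{sec-WPST}). Integrating the Morawetz inequality over $t\in\R$ then yields $\int_\R\int_{\R^3}\rho(t,x)^2\,dx\,dt < \infty$, which is the claim. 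The main obstacle is the algebraic verification of the positivity of $dM/dt$, in particular handling the $\lap\rho$ cross term alongside the $\rho^2$ contribution; however, this is entirely analogous to the deterministic cubic NLS computation, the only novelty being the use of Cauchy-Schwarz in $\omega$ to translate the pointwise Madelung inequality into the random setting.
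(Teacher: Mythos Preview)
Your plan is correct and follows essentially the same route as the paper: both set up the local conservation laws for $\rho=\E(|X|^2)$ and $\vec j=2\Im\E(\overline X\nabla X)$, run the interaction Morawetz argument with weight $|x-y|$, and use positivity of the kinetic piece together with the a priori $L^2(\Omega,H^1)$ bound from the conserved energy. Your explicit observation that the Madelung-type matrix inequality follows from Cauchy--Schwarz in $\omega$ is exactly the content of the paper's phrase ``for the same structural reasons as in the deterministic case''; one small imprecision is that the term producing $c\int\rho^2$ actually arises from the $-\tfrac12\partial_k\lap\rho$ contribution in the momentum balance (via $\lap_x^2|x-y|=-8\pi\delta(x-y)$ in $\R^3$), not from the nonlinear $\partial_k(\rho^2)$ piece, which instead contributes a nonnegative term of the form $\int\!\!\int \rho(y)\rho(x)^2/|x-y|$.
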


\begin{proof} We start from the fact that $X$ satisfies a conservation law written : for all $j= 0,1,2,3$,
$$
\partial_t T_{j0} = \sum_{k=1}^3 \partial_{x_k} T_{jk}
$$
with $T_{00} = \E(|X|^2)$, $T_{0j} = T_{j0} = -2\Im \E(\overline X \partial_{x_j}X)$ for $j>0$ and for $j,k > 0$,
$$
T_{jk} = 2 \Re(\E( \partial_{x_j} X \overline{\partial_{x_k}X})) - \frac12\delta_j^k \lap(\E(|X|^2))+ \delta_j^k \E(|X|^2)^2.
$$
Indeed, for $j=0$, we have 
$$
\partial_t T_{00} = 2\Re \E (\partial_t X \overline X) = 2\Im \E (i\partial_t X \overline X) = -2\Im\E( \lap X \overline X) + 2\Im \E ( \E(|X|^2) |X|^2)
$$
Because of the imaginary part, the second term is $0$. Besides, we have 
$$
\partial_{x_k} T_{0k} = -2\Im \E(|\partial_{x_k}X|^2) - 2\Im \E( \overline X \partial_{x_k}^2 X).
$$
Because of the imaginary part the first term is $0$ and summing over $k$ yields
$$
\partial_t T_{00} = \sum_{k=1}^3 \partial_{x_k} T_{0k}.
$$
For $j> 0$, we have 
$$
\partial_t T_{j0} = -2\Im \E(\partial_t \overline X \partial_{x_j}X)-2\Im \E(\overline X \partial_t\partial_{x_j}X) = -2\Re \E( \overline{ i \partial_t X} \partial_{x_j}X)+2\Re \E(\overline X \partial_{x_j}i\partial_t X).
$$
As $X$ solves \eqref{eqonrv}, we get
$$
\partial_t T_{j0} = 2\Re \E( \overline{ \lap X} \partial_{x_j}X)-2\Re \E(\overline X \partial_{x_j}\lap X) -2\Re \E( \overline{ \E (|X|^2)X} \partial_{x_j}X)+2\Re \E(\overline X \partial_{x_j}(\E (|X|^2)X)).
$$

For the same reasons as in the deterministic case, we have for the terms involving only the linear part of the equation,
$$
 2\Re \E( \overline{ \lap X} \partial_{x_j}X)-2\Re \E(\overline X \partial_{x_j}\lap X) = \sum_{k=1}^3 \partial_{x_k} \Big(  2 \Re( \E(\partial_{x_j} X \overline{\partial_{x_k}X})) - \frac12\delta_j^k \lap(\E(|X|^2))\Big).
$$

For the term involving the non-linearity, we have that 
$$
-2\Re \E( \overline{ \E (|X|^2)X} \partial_{x_j}X)+2\Re \E(\overline X \partial_{x_j}(\E (|X|^2)X)) = 4  \E(|X|^2) \Re \E( (\partial_{x_j}X) \overline X)
$$
and
$$
\partial_{x_k} \delta_j^k \E(|X|^2)^2 =  \delta_j^k 4 \E(|X|^2) \Re \E( (\partial_{x_j}\overline X) X).
$$
Summing over $k$ yields
$$
\partial_t T_{j0} = \sum_{k=1}^3 \partial_{x_k} T_{jk}.
$$

Thanks to this structure, we repeat the usual computation to get
$$
\partial_t \int_{\R^3 \times \Omega } \sum_j \frac{x_j}{|x|} \Im (\overline X \partial_{x_j} X)dx = \int_{\R^3 \times \Omega} \frac{|\grad_0 X(x)|^2}{|x|}dx+\int_{\R^3} \frac{\E(|X|^2)^2}{|x|}dx
$$
where $\grad_y$ is the angular part of the gradient centred in $y$ and thus $\grad_0$ is merely the angular gradient. We get the Morawetz estimate :
$$
\int_{\R\times \R^3} \frac{\E(|X|^2)^2}{|x|}dxdt \leq \sup_{t\in \R} \|X(t)\|_{H^1(\R^3)}^2 < \infty.
$$
Translating the last equality by $y$, we get
\begin{eqnarray*}
\partial_t \int_{\R^3 \times \Omega } \sum_j \frac{x_j-y_j}{|x-y|} \Im (\overline X(x) \partial_{x_j} X(x))dx &= &\int_{\R^3 \times \Omega} \frac{|\grad_y X(x)|^2}{|x-y|}dx+\\
&& \int_{\R^3} \frac{\E(|X(x)|^2)^2}{|x-y|}dx+\pi \E(|X(y)|^2).
\end{eqnarray*}
Finally, multiplying by $\E(|X(y)|^2)$ and integrating over $y$, we get
$$
\partial_t \int_{\R^3 \times \R^3 } \sum_j \frac{x_j-y_j}{|x-y|} \E(|X(y)|^2)|\Im \E(\overline X(x) \partial_{x_j} X(x))dxdy = I+II+III+IV
$$
with
\begin{eqnarray*}
I&=& \int_{\R^3 \times \R^3} \E(|X(y)|^2)\frac{|\grad_y X(x)|^2}{|x-y|}dxdy\\
II&=&\int_{\R^3\times \R^3} \E(|X(y)|^2)\frac{\E(|X(x)|^2)^2}{|x-y|}dxdy\\
III &=& \pi \int_{\R^3}\E(|X(y)|^2)^2 dy \\
IV &=& \int_{\R^3 \times \R^3 } \sum_j \frac{x_j-y_j}{|x-y|} \partial_t \Big( \E(|X(y)|^2) \Big) |\Im \E(\overline X(x) \partial_{x_j} X(x))dxdy .
\end{eqnarray*}
The terms $I$ and $II$ are non negative. The term $III$ is the one we want to estimate. For the same structural reasons as in the deterministic case, the term $IV$ is controlled by $I$. Hence, we get that 
$$
III \leq \partial_t \int_{\R^3 \times \R^3 } \sum_j \frac{x_j-y_j}{|x-y|} \E(|X(y)|^2)|\Im \E(\overline X(x) \partial_{x_j} X(x))dx dy
$$
and we get the interaction Morawetz estimate 
$$
\int_{\R\times \R^3} \E(|X|^2)^2dxdt \leq \sup_{t\in \R} \|X(t)\|_{H^1(\R^3)}^4 < \infty
$$
which concludes the proof.
\end{proof}

Let $I$ be an interval of $\R$. We call $\L_I$ the space 
$$
\L_I = L^{10}(I,L^{10}(\R^3)) \cap L^{10/3}(I,W^{1,10/3}(\R^3)).
$$

\begin{lemma}\label{lem-L5} With the notations of Proposition \ref{prop-scat}, we have $X \in L^2(\Omega, \L_\R)$. \end{lemma}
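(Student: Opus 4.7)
The plan is to adapt the standard defocusing cubic NLS scattering argument in $\R^3$ to the random-variable setting, using Minkowski's inequality to swap the probabilistic $L^2(\Omega)$ norm with spatial or temporal Lebesgue norms and thereby reduce to essentially deterministic bookkeeping.

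The starting point is the interaction Morawetz bound $\int_\R\!\int_{\R^3}\E(|X|^2)^2\,dx\,dt<\infty$ from Lemma \ref{lem-interMoraw}, which is precisely the statement $X\in L^4_tL^4_xL^2(\Omega)$ after identifying $\|X\|_{L^4_{t,x}L^2(\Omega)}^4=\int\!\int\E(|X|^2)^2$. Together with the a priori bound $\sup_t\|X(t)\|_{L^2(\Omega,H^1)}<\infty$ coming from conservation of $\mE$, these are the two globally controlled mixed norms we shall exploit.

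Next, fix $\eta>0$ (to be chosen small) and partition $\R$ into finitely many intervals $I_1,\ldots,I_N$ on which $\|X\|_{L^4(I_j\times\R^3,L^2(\Omega))}\leq\eta$. On each $I_j$, pick an endpoint $t_j$, write the Duhamel formula based at $t_j$, and apply Strichartz estimates for the $3$-dimensional admissible pairs $(10/3,10/3)$ and $(10,30/13)$; the latter, combined with the Sobolev embedding $W^{1,30/13}(\R^3)\hookrightarrow L^{10}(\R^3)$, yields the $L^{10}_tL^{10}_x$ component of $\L_{I_j}$. Commuting $L^2(\Omega)$ through by Minkowski exactly as in Lemma \ref{lem-trilin}, one obtains schematically
$$
\|X\|_{L^2(\Omega,\L_{I_j})}\;\lesssim\;\|X(t_j)\|_{L^2(\Omega,H^1)}+\bigl\|\E(|X|^2)X\bigr\|_{L^2(\Omega,L^{q'}_tW^{1,r'}_x(I_j\times\R^3))}
$$
for a suitable dual Strichartz pair $(q',r')$. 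The nonlinearity, and its gradient via the Leibniz identity $\grad(\E(|X|^2)X)=2\Re\E(\bar X\grad X)\,X+\E(|X|^2)\grad X$, are then estimated by Hölder so as to extract a factor $\|X\|_{L^4_{t,x}L^2(\Omega)}^2\leq\eta^2$ (the small Morawetz factor), while the remaining factors are absorbed either by the uniform energy bound $\sup_t\|X(t)\|_{L^2(\Omega,H^1)}$ via Sobolev embedding or by $\|X\|_{L^2(\Omega,\L_{I_j})}$ itself. Choosing $\eta$ small enough closes a bootstrap argument and gives a uniform bound on each $I_j$; summing the $N$ bounds yields $X\in L^2(\Omega,\L_\R)$.

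The main obstacle is the Hölder/Minkowski orchestration inside the nonlinear estimate: one must arrange that the small factor is picked up from $\|X\|_{L^4_{t,x}L^2(\Omega)}$, the norm controlled by Morawetz, and not from its Minkowski-reverse $\|X\|_{L^2(\Omega)L^4_{t,x}}$, which is not a priori finite. This is tedious but structurally identical to the commutation arguments already carried out in Lemma \ref{lem-trilin} and Proposition \ref{prop-lwp}; once the bookkeeping is set up correctly, the partition-and-bootstrap scheme proceeds exactly as in the deterministic case.
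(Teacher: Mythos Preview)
Your proposal is correct and follows essentially the same route as the paper: partition $\R$ into finitely many intervals on which the interaction-Morawetz norm $\|X\|_{L^4(I_j\times\R^3,L^2(\Omega))}$ is small, then run Duhamel with the Strichartz pairs $(10/3,10/3)$ and $(10,30/13)$ (plus Sobolev $W^{1,30/13}\hookrightarrow L^{10}$), commute $L^2(\Omega)$ via Minkowski, and close a bootstrap on each interval. The only cosmetic discrepancy is the exponent on the small factor: the paper interpolates $\|X\|_{L^5}\leq\|X\|_{L^4}^{2/3}\|X\|_{L^{10}}^{1/3}$ to obtain $\|X\|_{L^2(\Omega,\L_{I_j})}\leq C\mE_0+C\|X\|_{L^2(\Omega,\L_{I_j})}^{5/3}\|X\|_{L^4(I_j\times\R^3,L^2(\Omega))}^{4/3}$, so the Morawetz factor enters with power $4/3$ rather than $2$; either exponent closes the bootstrap.
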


\begin{proof} Let $I= [t_1,t_2]$. For all $t \in T$, the Duhamel formula of \eqref{eqonrv} writes
$$
X(t) = S(t-t_1)X(t_1) - i \int_{t_1}^t S(t-\tau) \Big( \E(|X(\tau)|^2)X(\tau) d\tau.
$$
We have that $W^{1,30/13}(\R^3)$ is embedded in $L^{10}(\R^3)$ by Sobolev's embedding, and $(10,\frac{30}{13})$ and $(\frac{10}{3},\frac{10}{3})$ are admissible for the Schr\"odinger dispersion in dimension $3$, since
$$
\frac{2}{10} + \frac{3}{30/13}= \frac{15}{10} = \frac32 \mbox{ and } \frac{2}{10/3} + \frac{3}{10/3} = \frac32.
$$
Besides, $\frac{10}{7}$ is the conjugate of $\frac{10}{3}$ hence, thanks to Strichartz estimates and a $TT^*$ argument
$$
\|X\|_{\L_I} \leq C \|X(t_1)\|_{H^1} + C\|\E(|X|^2) X\|_{L^{10/7}(I,W^{1,10/7}(\R^3))}  .
$$
We use the fact that $\frac{10}{7} \leq 2$ to apply Minkowski inequality and get
$$
\|X\|_{L^2(\Omega,\L_I)} \leq C \|X(t_1)\|_{L^2(\Omega,H^1)} + C\|D(\E(|X|^2) X)\|_{L^{10/7}(I,L^{10/7}(\R^3,L^2(\Omega)))}  .
$$
Distributing the derivative, we get
$$
\|D(\E(|X|^2) X)\|_{L^2(\Omega)} \leq \|D X\|_{L^2(\Omega)}  \| X\|_{L^2(\Omega)}^2.
$$
Using H\"older's inequality with $2\frac15 + \frac{3}{10}= \frac{7}{10}$, we get
$$
\|D(\E(|X|^2) X)\|_{L^{10/7}(I,L^{10/7}(\R^3,L^2(\Omega)))} \leq \|D X\|_{L^{10/3}(I\times \R^3,L^2(\Omega))}  \| X\|_{L^5(I\times \R^3,L^2(\Omega))}^2.
$$
Using again Minkowski's inequality as $\frac{10}{3} \geq 2$, we get
$$
\|D X\|_{L^{10/3}(I\times \R^3,L^2(\Omega))} \leq \|X\|_{L^2(\Omega, L^{10/3}(I, W^{1,10/3}(\R^3)))}  \leq \|X\|_{L^2(\Omega,\L_I)}.
$$
Using that $5$ lies between $4$ and $10$, we get
$$
\| X\|_{L^5(I\times \R^3,L^2(\Omega))} \leq \| X\|_{L^4(I\times \R^3,L^2(\Omega))}^{2/3}\| X\|_{L^{10}(I\times \R^3,L^2(\Omega))}^{1/3}.
$$
Using once more Minkowski's inequality and the definition of $\L_I$, we have 
$$
\| X\|_{L^{10}(I\times \R^3,L^2(\Omega))} \leq \|X\|_{L^2(\Omega,\L_I)}.
$$

Besides, we use that thanks to the conservation of the energy the quantity $\|X(t_1)\|_{L^2(\Omega,H^1)}$ is bounded uniformly in $t_1$ by a quantity $\mE_0$.

Summing up, we get
$$
\|X\|_{L^2(\Omega,\L_I)} \leq C \mE_0 + C\|X\|_{L^2(\Omega,\L_I)}^{5/3}\|X\|_{L^4(I \times \R^3,L^2(\Omega))}^{4/3} .
$$
Let $\varepsilon =  \mE_0^{-1/2} (2C)^{-5/2}$. As, by Lemma \ref{lem-interMoraw}
$$
\|X\|_{L^4(\R \times \R^3,L^2(\Omega))} = \Big( \int_{\R}dt \int_{\R^3}dx \E(|X(t,x)|^2)^2 \Big)^{1/4}
$$
is finite, there exist a finite family of intervals $(I_j)_{1\leq j\leq r}$ such that
$$
\bigcup_{i=1}^rI_j = \R \mbox{ and for all } j\, , \|X\|_{L^4(I_j \times \R^3,L^2(\Omega))} \leq \varepsilon.
$$
Therefore, for all $j$, we get 
$$
\|X\|_{L^2(\Omega,\L_{I_j})} \leq C \mE_0 + C\|X\|_{L^2(\Omega,\L_{I_j})}^{5/3}\varepsilon^{4/3} .
$$
This choice of $\varepsilon$ implies $\|X\|_{L^2(\Omega,\L_{I_j})} \leq 2C\mE_0$. Summing over $j$ yields 
$$
\|X\|_{L^2(\Omega,\L_{\R})} \lesssim \mE_0 < \infty
$$
hence the result. \end{proof}

We describe $X_{\pm \infty}$. 

\begin{lemma} \label{lem-Xinfty} Let 
$$
X_{\pm \infty} = X_0 - i \int_{0}^{\pm \infty} S(-\tau) \E(|X(\tau)|^2)X(\tau) d\tau.
$$
The maps $X_{\pm \infty}$ belong to $L^2(\Omega,H^1(\R^3))$.
\end{lemma}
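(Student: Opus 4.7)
The plan is to verify that the integral defining $X_{\pm\infty}$ converges in $L^2(\Omega, H^1(\R^3))$. Equivalently, since $S(t)$ is unitary on $H^1$, the quantity $S(-t) X(t) = X_0 - i\int_0^t S(-\tau) \E(|X|^2) X \, d\tau$ should be Cauchy in $L^2(\Omega, H^1)$ as $t \to \pm \infty$. So for $t_1 < t_2$ going to $\pm \infty$, it suffices to show that the tail
$$
\Big\|\int_{t_1}^{t_2} S(-\tau)\E(|X(\tau)|^2) X(\tau) \, d\tau\Big\|_{L^2(\Omega, H^1(\R^3))}
$$
tends to $0$.

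First, I would rewrite this integral as $S(-t_2) \int_{t_1}^{t_2} S(t_2-\tau) \E(|X|^2) X \, d\tau$ and use that $S$ is an $H^1$-isometry, reducing the estimate to a standard forward Duhamel term. The dual Strichartz inequality with the admissible pairs $(10/3, 10/3)$ and (via Sobolev embedding) $(10, 30/13)$, combined with Minkowski's inequality (available since $10/7 \leq 2$), would then bound the above norm by a constant times $\|\E(|X|^2) X\|_{L^{10/7}([t_1,t_2], W^{1,10/7}(\R^3, L^2(\Omega)))}$.

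Next, I would reuse verbatim the chain of Hölder and Minkowski inequalities carried out at the end of Lemma \ref{lem-L5}: distribute the derivative across $\E(|X|^2) X$, apply Hölder in space with exponents summing to $7/10$, swap in $L^2(\Omega)$ by Minkowski, use the Sobolev embedding $W^{1,10/3}(\R^3) \hookrightarrow L^{10}(\R^3)$, and interpolate $L^5 \subset L^4 \cap L^{10}$. This would produce a bound of the form
$$
\Big\|\int_{t_1}^{t_2} S(-\tau)\E(|X|^2) X \, d\tau\Big\|_{L^2(\Omega, H^1(\R^3))} \lesssim \|X\|_{L^2(\Omega, \L_{[t_1,t_2]})}^{5/3} \|X\|_{L^4([t_1,t_2]\times \R^3, L^2(\Omega))}^{4/3}.
$$

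To conclude, the first factor is controlled by $\|X\|_{L^2(\Omega, \L_\R)} < \infty$ by Lemma \ref{lem-L5}, while the second factor tends to $0$ as $t_1, t_2 \to \pm \infty$ by absolute continuity of the Lebesgue integral, since $X \in L^4(\R \times \R^3, L^2(\Omega))$ by Lemma \ref{lem-interMoraw}. Hence the Duhamel integral is Cauchy and converges in $L^2(\Omega, H^1(\R^3))$, and together with $X_0$ defines $X_{\pm \infty}$ in that space. The main obstacle is purely bookkeeping — checking that each Minkowski swap between $L^2(\Omega)$ and the space-time Lebesgue norms goes in the favorable direction — but since Lemma \ref{lem-L5} already performs essentially the same calculation on $X$ itself, no genuinely new ingredient is required.
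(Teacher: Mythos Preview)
Your proposal is correct and follows essentially the same route as the paper: both rely on the dual Strichartz estimate with the $(10/3,10/3)$ pair and reuse the H\"older/Minkowski/interpolation chain from Lemma~\ref{lem-L5} to control $\|\E(|X|^2)X\|_{L^{10/7}W^{1,10/7}}$ by $\|X\|_{L^2(\Omega,\L)}^{5/3}\|X\|_{L^4_{t,x}L^2_\Omega}^{4/3}$. The only cosmetic difference is that the paper bounds the full integral over $[0,\pm\infty)$ in one stroke (invoking finiteness of the $L^5$ norm ``by interpolation''), whereas you phrase it as a Cauchy argument on $[t_1,t_2]$ and let the $L^4$ factor tend to zero; the underlying estimate is identical.
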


\begin{proof} First, $X_0 \in L^2(\Omega,H^1(\R^3))$. Then, thanks to Strichartz estimates and a $T^*$ argument, we get
$$
\big\| \int_{0}^{\pm \infty} S(-\tau) \E(|X(\tau)|^2)X(\tau) d\tau\big\|_{L^2(\Omega,H^1(\R^3))} \leq  C  \|D(\E(|X|^2)X)\|_{L^2(\Omega,L^{10/7}(\R\times \R^3))} .
$$
With the same computation as previously, we get
$$
\big\| \int_{0}^{\pm \infty} S(-\tau) \E(|X(\tau)|^2)X(\tau) d\tau\big\|_{L^2(\Omega,H^1(\R^3))} \leq C \|X\|_{L^2(\Omega ,L^5(\R\times \R^3))}^2 \|X\|_{L^2(\Omega,L^{10/3}(\R,W^{1,10/3}(\R^3))) }
$$
which is finite by interpolation. \end{proof}

\begin{proof}[Proof of Proposition \ref{prop-scat}.] We focus on $+\infty$. We have 
\begin{multline*}
\|X(t) - S(t)X_{+ \infty}\|_{H^1(\R^3)} = \big\| \int_{t}^\infty S(t-\tau) \E(|X(\tau)|^2X(\tau)d\tau)\big\|_{H^1(\R^3)}\\
\leq C \|1_{\tau \geq t}X\|_{L^2(\Omega ,L^5(\R\times \R^3))}^2 \|1_{\tau \geq t}X\|_{L^2(\Omega,L^{10/3}(\R,W^{1,10/3}(\R^3))) }
\end{multline*}
which goes to $0$ as $t$ goes to $\infty$. We use the dominated convergence theorem to handle the $L^2(\Omega)$ norm.\end{proof}

\subsection{Lack of localised equilibrium}

\begin{proposition} Let $Y$ be a solution of \eqref{eqonrv} whose law is invariant in time. Assume that $Y(t=0)$ belongs to $L^2(\Omega,H^1(\R^3))$. Then $Y = 0$.
\end{proposition}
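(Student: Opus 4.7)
The plan is to combine the interaction Morawetz estimate of Lemma \ref{lem-interMoraw} with the time-invariance of the law of $Y$. The Morawetz estimate gives finiteness of $\int_\R \int_{\R^3} \E(|Y(t,x)|^2)^2 \, dx \, dt$, while invariance of the law will force the inner spatial integral to be constant in $t$; the only way a nonnegative constant is integrable over $\R$ is for it to vanish, which will yield $Y(0)=0$ almost surely.

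First I would check that $Y$ fits the hypotheses of Lemma \ref{lem-interMoraw}: since $Y(0)\in L^2(\Omega,H^1(\R^3))$ and $Y$ solves \eqref{eqonrv}, the global existence underlying Proposition \ref{prop-scat} applies and the interaction Morawetz computation delivers
$$
\int_\R dt \int_{\R^3} \E(|Y(t,x)|^2)^2 \, dx \;\leq\; \sup_{t\in\R}\|Y(t)\|_{L^2(\Omega,H^1(\R^3))}^4 \;<\; \infty,
$$
where the supremum is finite by conservation of $\mE$ (itself controlled by the norm at $t=0$, which is preserved by law-invariance anyway). Next, for every $\varphi\in C_c^\infty(\R^3)$, the functional $u\mapsto \int_{\R^3} |u(x)|^2 \varphi(x)\,dx$ is continuous on $H^1(\R^3)$ via the Sobolev embedding $H^1\hookrightarrow L^6$, so invariance of the law of $Y(t)$ as an $H^1(\R^3)$-valued random variable yields that $t\mapsto \int_{\R^3} \E(|Y(t,x)|^2)\varphi(x)\,dx$ is constant. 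Varying $\varphi$ identifies $\E(|Y(t,x)|^2)$ with $\E(|Y(0,x)|^2)$ as functions of $x$ for every $t$. Hence $t\mapsto \int_{\R^3} \E(|Y(t,x)|^2)^2\,dx$ is some constant $c\geq 0$, and the Morawetz bound forces $c=0$. We conclude $\E(|Y(0,x)|^2)=0$ for almost every $x$, so $Y(0)=0$ almost surely in $L^2(\Omega,H^1(\R^3))$, and by uniqueness for \eqref{eqonrv} we obtain $Y\equiv 0$.

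The main obstacle is really the second step: upgrading the abstract equality of laws of $H^1(\R^3)$-valued random variables to the pointwise-in-$x$ identity $\E(|Y(t,x)|^2)=\E(|Y(0,x)|^2)$ needed to contradict the Morawetz integrability. A priori the density $\E(|Y(t,\cdot)|^2)$ is only an $L^3(\R^3)$ function, so one cannot compare pointwise values directly; one must test against a dense family of $\varphi$'s as above and invoke continuity. Once this identification is established, the rest of the argument is immediate. An alternative would be to use Proposition \ref{prop-scat} together with dispersion of $S(t)$ in $\R^3$ to deduce that $Y(t)\to 0$ in, say, $L^2(\Omega, L^p(\R^3))$ for $p>2$, which is incompatible with invariance of the law unless $Y\equiv 0$; but the Morawetz route avoids an extra dispersion estimate and is cleaner.
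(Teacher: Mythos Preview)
Your argument is correct and follows the same skeleton as the paper's proof: a global spacetime bound together with time-invariance of the law forces a constant-in-time spatial integral to vanish. The difference is in which spacetime norm you invoke. The paper appeals to Lemma~\ref{lem-L5} to place $Y$ in $L^2(\Omega,L^{10}(\R\times\R^3))\hookrightarrow L^{10}(\R\times\R^3,L^2(\Omega))$ and then argues that $\int_{\R^3}\E(|Y(t,x)|^2)^5\,dx$ is constant in $t$; you instead go directly to the interaction Morawetz estimate of Lemma~\ref{lem-interMoraw} and work with $\int_{\R^3}\E(|Y(t,x)|^2)^2\,dx$. Your route is slightly more economical, since Lemma~\ref{lem-L5} is itself built on Lemma~\ref{lem-interMoraw} via an additional bootstrap. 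You are also more explicit than the paper about the step ``law-invariance implies $\E(|Y(t,\cdot)|^2)$ is independent of $t$'': the paper simply asserts this, while your testing against $\varphi\in C_c^\infty$ makes the identification rigorous.
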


\begin{proof}
Indeed, if $Y$ is in $L^2(\Omega,H^1(\R^3))$ then thanks to lemma \ref{lem-L5}, $Y$ belongs to $L^2(\Omega, L^{10}(\R\times \R^3))$ which is continuously embedded in $L^{10}(\R\times \R^3, L^2(\Omega))$. We have 
$$
\|Y\|_{L^{10}(\R\times \R^3, L^2(\Omega))}^{10} = \int_{\R} dt \int_{\R^3}dx \E(|Y(t,x)|^2)^5.
$$
Because the law of $Y$ does not depend on time, we have that $\E(|Y(t,x)|^2)^5$ is a map $\varphi(x)$ which does not depend on time. Hence $\int_{\R^3}dx \E(|Y(t,x)|^2)^5$ is a constant and thus, for it to be integrable, it has to be $0$, which ensures that $Y=0$.
\end{proof}

\section{On the focusing case}\label{sec-focus}

Up to now, we have only considered the defocusing case but we can now consider the focusing equation :
\begin{equation}\label{eqonrvfoc}
i\partial_t X = -\lap X - \E(|X|^2)X
\end{equation}
in $\R^d$, $d\leq 3$. 

First of all, this equation is locally well-posed for initial data taken in $H^1(\R^d)$, $d\leq 3$.

Besides, we remark that \eqref{eqonrvfoc} has stationary solutions. Let $Q$ be a stationary solution of $i\partial_t u = - \lap u - |u|^2 u$ and $X$ be a random variable such that the probability that $X = Q$ is $1$. Then $X$ is a stationary solution of \eqref{eqonrvfoc}.

We prove the existence of blow-up solutions for the focusing equation.

We proceed with a viriel method. We prove that 
\begin{equation}\label{defvir}
V(t) = \int_{\Omega\times \R^d} |x|^2|X|^2
\end{equation}
is well-defined on $[0,T]$ as long as the solution $X$ of \eqref{eqonrvfoc} is well posed on $[0,T]$.

\begin{lemma}\label{lem-computederone} Let $\varphi$ be a non negative $\mathcal C^1 $ function on $\R^d$ with compact support. We have 
$$
\partial_t \Big( \int_{\Omega \times \R^d} \varphi(x) |X|^2\Big) = 2 \Im \int_{\Omega \times \R^d} \grad \varphi \overline X \grad X.
$$
\end{lemma}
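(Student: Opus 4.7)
The plan is a direct computation justified by approximation. First, by a density/persistence-of-regularity argument (and the assumption that the computations can be carried out on the level of smooth enough solutions, as was done for the conservation of $\mathcal{E}$ earlier), I may assume $X$ is smooth enough in $x$ and $t$ to differentiate under the integral sign and to integrate by parts; the compact support of $\varphi$ makes the boundary terms vanish. Commuting $\partial_t$ with the integral over $\Omega \times \R^d$ gives
$$
\partial_t \int_{\Omega \times \R^d} \varphi(x) |X|^2 = \int_{\Omega \times \R^d} \varphi(x) (\partial_t X \, \overline X + X\, \partial_t \overline X) = 2\, \Re \int_{\Omega \times \R^d} \varphi(x)\, \overline X \, \partial_t X.
$$

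Next I would use equation \eqref{eqonrvfoc} to replace $\partial_t X$. Writing $\Re(\overline X \partial_t X) = \Im(\overline X \, i\partial_t X)$ and plugging in $i\partial_t X = -\lap X - \E(|X|^2)X$, the nonlinear contribution $\E(|X|^2)|X|^2$ is pointwise real and therefore its imaginary part vanishes, so
$$
\partial_t \int_{\Omega \times \R^d} \varphi(x) |X|^2 = -2 \, \Im \int_{\Omega \times \R^d} \varphi(x)\, \overline X\, \lap X.
$$
Observe that the same cancellation would occur in the defocusing case, so the sign of the interaction is irrelevant here.

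Finally, integrate by parts in $x$. Since $\varphi$ has compact support, there are no boundary terms, and
$$
\int_{\R^d} \varphi\, \overline X\, \lap X \, dx = -\int_{\R^d} \grad \varphi \cdot \overline X \, \grad X \, dx - \int_{\R^d} \varphi\, |\grad X|^2 \, dx.
$$
The second term is real, so its imaginary part drops out after integrating over $\Omega$, leaving
$$
\partial_t \int_{\Omega \times \R^d} \varphi(x) |X|^2 = 2 \, \Im \int_{\Omega \times \R^d} \grad \varphi \cdot \overline X \, \grad X,
$$
which is the claimed identity. The only genuine subtlety is the initial justification that $X$ is regular enough to allow differentiation under the integral and integration by parts; this is handled (as elsewhere in the paper) by approximating $X_0$ by smoother initial data, performing the computation on the smooth approximants, and passing to the limit using the continuity of the flow in $L^2(\Omega, H^1)$ together with the compact support of $\varphi$ (which makes $\int \varphi |X|^2$ and $\int \grad\varphi\cdot \overline X \grad X$ continuous in $X$ with respect to that topology).
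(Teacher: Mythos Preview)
Your proof is correct and follows essentially the same route as the paper's: differentiate under the integral, replace $i\partial_t X$ using the equation, discard the nonlinear term since $\varphi\,\E(|X|^2)|X|^2$ is real, integrate by parts in $x$, and drop the $\varphi|\grad X|^2$ term for the same reason. Your write-up is in fact more explicit than the paper's (which simply says ``the computation is the same as in the deterministic case'') in justifying the approximation argument and the use of compact support, and you correctly observe that the sign of the nonlinearity is immaterial here.
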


\begin{proof} The computation is the same as in the deterministic case, which yields
$$
\partial_t \Big( \int_{\Omega \times \R^d} \varphi(x) |X|^2 \Big)= 2 \Im \int_{\Omega\times \R^d} \varphi(x) \overline X (-\lap X + \E(|X|^2X).
$$
We have $\varphi(x) \overline X \E(|X|^2)X \in \R$ thus we keep only
$$
2 \Im \int_{\Omega\times \R^d} \varphi(x) \overline X (-\lap X)
$$
and with an integration by parts we get
$$
2 \Im \int_{\Omega\times \R^d}\grad( \varphi(x) \overline X) \grad X
$$
and by developing the gradient and seeing that $ \varphi |\grad X|^2 \in \R$, we get the result. \end{proof}

Let $\varphi$ the specific function such that 
$$
\varphi(x) = \left \lbrace{\begin{array}{lll}
|x|^2 & \mbox{ if } |x|\leq 1 \\
e^{1-1/(|x|-2)^2} & \mbox{ if } |x|\in [1,2]\\
0 & \mbox{ otherwise. }
\end{array}}\right.
$$
We have $\varphi \in \mathcal C^1$ with compact support and there exists $C$ such that for all $x\in \R^d$, $|\grad \varphi (x)|^2 \leq C\varphi(x)$.

\begin{lemma}\label{lem-welldefined} Assuming that $V(t=0)$ is well-defined, the Viriel $V(t)$ is well-defined on $[0,T]$ as long as the solution $X$ of \eqref{eqonrvfoc} is well posed on $[0,T]$.\end{lemma}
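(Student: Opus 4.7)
The plan is to truncate $|x|^2$ by a family of compactly supported cutoffs to which Lemma \ref{lem-computederone} applies, obtain an $R$-uniform upper bound on the truncated viriels, and then pass to the limit. Specifically, I would set $\varphi_R(x) = R^2 \varphi(x/R)$, where $\varphi$ is the function introduced just before the lemma. Then $\varphi_R \in \mathcal{C}^1$ with compact support in $\{|x|\leq 2R\}$, one has $\varphi_R(x) = |x|^2$ for $|x|\leq R$, $0 \leq \varphi_R(x) \leq |x|^2$ everywhere, $\varphi_R \uparrow |x|^2$ monotonically as $R\to\infty$, and by scaling the pointwise bound $|\grad \varphi|^2 \leq C\varphi$ promotes to $|\grad \varphi_R|^2 \leq C \varphi_R$ with the \emph{same} constant $C$. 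This scale invariance of the gradient estimate is the whole reason for this choice of truncation.

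Next, define $V_R(t) = \int_{\Omega \times \R^d} \varphi_R(x) |X(t,x)|^2$. Lemma \ref{lem-computederone} applies to $\varphi_R$, giving
$$
\partial_t V_R(t) = 2 \Im \int_{\Omega \times \R^d} \grad \varphi_R \cdot \overline{X} \grad X.
$$
Cauchy--Schwarz in $\Omega \times \R^d$, combined with the key estimate $|\grad \varphi_R|^2 \leq C\varphi_R$, yields
$$
|\partial_t V_R(t)| \leq 2\sqrt{C}\, V_R(t)^{1/2} \,\|\grad X(t)\|_{L^2(\Omega \times \R^d)},
$$
so that $|\partial_t \sqrt{V_R(t)}| \leq \sqrt C\, \|X(t)\|_{L^2(\Omega, H^1)}$.

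By the assumed well-posedness of $X$ on $[0,T]$ (and persistence of regularity) the quantity $M := \sup_{t\in[0,T]} \|X(t)\|_{L^2(\Omega, H^1)}$ is finite. Since $V_R(0) \leq V(0) < \infty$ by hypothesis, integration gives the $R$-independent bound
$$
V_R(t)^{1/2} \leq V(0)^{1/2} + \sqrt{C}\, M\, t, \qquad t \in [0,T].
$$
Monotone convergence as $R \to \infty$ (on the product measure $dP \otimes dx$) then transfers this bound to $V(t)$, so $V$ is finite on $[0,T]$.

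The main obstacle in this argument is obtaining a bound on $\partial_t V_R$ whose coefficient does not blow up when $R \to \infty$: a naive truncation $\chi_R(x) |x|^2$ by a smooth indicator of $\{|x| \leq R\}$ produces $|\grad \chi_R|^2 |x|^4 \sim R^2$ terms that do not close the Grönwall inequality. The specific $\varphi$ chosen is what makes $|\grad \varphi_R|^2 \leq C \varphi_R$ scale-invariant and thus allows the estimate to be closed uniformly in $R$; after that, everything is standard.
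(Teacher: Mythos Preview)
Your proof is correct and follows essentially the same route as the paper: define $\varphi_R(x)=R^2\varphi(x/R)$, apply Lemma~\ref{lem-computederone}, use Cauchy--Schwarz together with the scale-invariant bound $|\grad\varphi_R|^2\leq C\varphi_R$ to get a Gr\"onwall-type inequality for $V_R^{1/2}$, and pass to the limit $R\to\infty$. The only cosmetic difference is that the paper bounds $\int_{|x|\leq R}|x|^2|X|^2$ from above by $\int\varphi_R|X|^2$ and lets $R\to\infty$, whereas you invoke monotone convergence on $\varphi_R\uparrow|x|^2$ directly; both finish the argument equally well.
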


\begin{proof} For all $R > 0$ let $\varphi_R(x) = R^2 \varphi(\frac{x}{R})$. We have 
$$
\int_{|x|\leq R} |x|^2 |X|^2 \leq \int \varphi_R (x) |X|^2 .
$$
We apply the last lemma to get
$$
\partial_t \Big( \int \varphi_R(x) |X|^2 \Big) =  2 \Im \int_{\Omega \times \R^d} \grad \varphi_R \overline X \grad X.
$$
We apply Cauchy-Schwartz inequality to get
$$
\Big| \partial_t \Big( \int \varphi_R(x) |X|^2 \Big) \Big| \leq 2 \|\grad \varphi_R \overline X\|\, \|X(t)\|_{H^1}.
$$
We use that $|\grad{\varphi_R} (x)|^2=| R \grad \varphi(\frac{x}{R})|^2 \leq C R^2 \varphi (\frac{x}{R}) = \varphi_R(x)$ to get
$$
\Big| \partial_t \Big( \int \varphi_R(x) |X|^2 \Big) \Big| \leq C \Big(\int \varphi_R(x) |X|^2 \Big)^{1/2} \|X(t)\|_{H^1}
$$
From which we deduce 
$$
 \Big(\int \varphi_R(x) |X|^2 \Big)^{1/2} \leq  V(t=0)^{1/2} + C \int_{0}^T \|X(\tau)\|_{H^1} d\tau.
$$
As the right hand side is bounded uniformly in $R$, we get the result. \end{proof}

We compute the second derivative of $V$. 

\begin{lemma}We have, where $V$ is defined
$$
\partial_t^2 V(t) \leq 16 \mE(X_0).
$$
\end{lemma}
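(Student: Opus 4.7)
The plan is to differentiate $V$ twice in time, substitute the equation, and identify the result as $16\mE(X)+$ a non-positive remainder; conservation of the (focusing) energy then finishes the argument.

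First I would apply Lemma \ref{lem-computederone} with weight $\varphi(x)=|x|^2$ to get
$$
\partial_t V(t) \;=\; 4\,\Im \int_{\Omega\times\R^d} x\cdot \overline X\,\grad X.
$$
Since $\varphi$ is not compactly supported, this has to be justified by the same cutoff device used in Lemma \ref{lem-welldefined}: work with $\varphi_R(x)=R^2\varphi(x/R)$, get the formula for $V_R$, and pass to $R\to\infty$ using that $V(t)$ is finite (Lemma \ref{lem-welldefined}) and that $X(t)\in L^2(\Omega,H^1)$.

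Next I would differentiate once more in time and substitute $i\partial_t X=-\lap X-\E(|X|^2)X$. The cross terms involving $\E(|X|^2)\,\overline X\,\partial_j X$ cancel because they appear with opposite signs from $\partial_t\overline X$ and from $\partial_t X$, leaving two contributions:
$$
\partial_t^2 V(t) \;=\; 4\sum_j \int_{\Omega\times\R^d} x_j\,\Re\bigl(\overline X\,\lap\partial_j X-\lap\overline X\,\partial_j X\bigr) \;+\; 4\sum_j \int_{\R^d} x_j\,\E(|X|^2)\,\partial_j\E(|X|^2).
$$
The first (purely linear) integral is the classical NLS viriel: integrating by parts twice in each $x_k$ gives $2\sum_j\int |\partial_j X|^2=2\int|\grad X|^2$ (over $\Omega\times\R^d$), hence the linear contribution is $8\int_{\Omega\times\R^d}|\grad X|^2$. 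For the nonlinear piece, use that $\E(|X|^2)\,\partial_j \E(|X|^2)=\tfrac12\partial_j\bigl(\E(|X|^2)^2\bigr)$ and integrate by parts in $x_j$ (using $\partial_j x_j=1$) to obtain $-\tfrac{d}{2}\int \E(|X|^2)^2$, i.e.\ $-2d\int \E(|X|^2)^2$ after the factor $4$. Therefore
$$
\partial_t^2 V(t) \;=\; 8\int_{\Omega\times\R^d}|\grad X|^2 \;-\; 2d\int_{\R^d}\E(|X|^2)^2.
$$

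Now the natural energy for \eqref{eqonrvfoc} on $\R^d$ is $\mE(X)=\tfrac12\int_{\Omega\times\R^d}|\grad X|^2-\tfrac14\int_{\R^d}\E(|X|^2)^2$, conserved by a computation identical to the one in Section \ref{sec-WPST} with the sign of the potential part reversed. Hence $16\,\mE(X_0)=8\int|\grad X|^2-4\int\E(|X|^2)^2$, and
$$
\partial_t^2 V(t) - 16\,\mE(X_0) \;=\; 2(2-d)\int_{\R^d}\E(|X|^2)^2 \;\le\; 0
$$
for $d\in\{2,3\}$ (with equality when $d=2$), which gives the claimed bound.

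The main obstacle is not the algebra, which is routine, but the rigorous justification of all these integrations by parts when $\varphi=|x|^2$ has no compact support. I would handle it exactly as in Lemma \ref{lem-welldefined}, working first with the truncated weights $\varphi_R$, then passing to the limit: the boundary-type terms coming from $\grad\varphi_R\neq 2x$ are supported on $|x|\sim R$ and are controlled by $\|X(t)\|_{L^2(\Omega,H^1)}$ times $\bigl(\int_{|x|\gtrsim R}|x|^2\E(|X|^2)\bigr)^{1/2}$, which tends to $0$ as $R\to\infty$ since $V(t)<\infty$. A standard regularization in the initial datum (persistence of higher regularity, cf.\ Remark \ref{rem-persistencehireg} adapted to $\R^d$) makes the pointwise-in-time differentiations legitimate.
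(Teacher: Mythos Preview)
Your argument is correct and follows essentially the same route as the paper: both differentiate $V$ twice using the equation to reach the identity $\partial_t^2 V = 8\int_{\Omega\times\R^d}|\grad X|^2 - 2d\int_{\R^d}\E(|X|^2)^2$ and then compare with $16\,\mE$ for $d\ge 2$. The only differences are cosmetic---the paper organizes the second differentiation as $I+II$ according to which factor gets $\partial_t$, rather than by linear/nonlinear contributions---and a convention on $\mE$: the paper's energy (inherited from Section~\ref{sec-WPST}) carries the mass term $\tfrac12\int|X|^2$, which only makes the right-hand side larger, so your inequality implies the stated one a fortiori.
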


\begin{proof} We have, thanks to Lemma \ref{lem-computederone}
$$
\partial_t V = 4\Im \int_{\Omega\times \R^d  } x\grad X \overline X.
$$
We differentiate it a second time to get
$$
\partial_t^2 V =  I + II
$$
with
$$
I= 4 \Re \int_{\Omega \times \R^d} x\Big(  \overline{i\partial_t X} \grad X \Big) \mbox{ and }II= -4 \Re \int_{\Omega \times \R^d} x\Big( \overline X \grad (i\partial_t X)\Big) .
$$
By integration by parts, we get that $II$ is given by
$$
4\Re d \int \overline X (i\partial_t X) + 4\Re \int x \grad \overline X (i\partial_t X) 
$$
and thus, by replacing $i\partial_t X$ by its value,
$$
\partial_t^2 V(t) = 4d  \int_{\Omega \times \R^d} \overline X (-\lap) X + 4d \int_{\R^d} \E(|X|^2)^2  +2 I.
$$

We compute $I$.  By replacing $i\partial_t X$ by its value, we get
$$
I = I.1 + I.2 
$$
with
$$
I.1 = 4\Re \int_{\Omega \times \R^d} x\grad X ( -\lap \overline X ) \mbox{ and } I.2 = 4\Re \int_{\Omega \times \R^d} x\grad X (- \E(|X|^2) \overline X\Big) 
$$
The computation for $I.1$ is the same as in the deterministic case, and we get 
$$
I.1 = (2d-4) \int_{\Omega \times \R^d} X\lap \overline X.
$$

The computation for $I.2$ requires to take into account the probability. We replace the gradient by partial derivatives to get
$$
I.2 =- 4\Re\sum_j \int_{\Omega\times \R^d} x_j \E(|X|^2)  \overline X \partial_j X
$$
where $\partial_j = \partial_{x_j}$. We replace the integral in $\Omega$ by the expectation $\E$ to get
$$
I.2 = -4 \Re \sum_j \int_{\R^d}x_j \E(|X|^2) \E ( \overline X \partial_j X).
$$
We remark that $\partial_j \E(|X|^2)^2 = 4\Re \E(|X|^2)\E(\overline X \partial_j X)$ such that
$$
I.2 = -\sum_j \int_{\R^d}x_j \partial_j \E(|X|^2)^2
$$
and by integration by parts
$$
I.2 = d \int_{\R^d} \E(|X|^2)^2 .
$$

Summing up, we get
$$
\partial_t^2 V(t) = 8 \int_{\Omega\times \R^d} \overline X (-\lap) X -2d \int_{\R^d} \E(|X|^2)^2
$$
and for $d \geq 2$,
$$
\partial_t^2 V \leq 16 \mE(X(t)) = 16 \mE(X_0).
$$
\end{proof}

\begin{proposition}\label{prop-blowup}If $X_0 \in L^2(\Omega, H^1(\R^d))$ is such that
 $V(t=0)$ is finite and $\mE(X_0) < 0$, then the solution of \eqref{eqonrvfoc} blows up at finite time.\end{proposition}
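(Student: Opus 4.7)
The strategy is the classical Glassey virial argument, transplanted to the random variable setting, and essentially every ingredient has already been assembled in the preceding lemmas. The plan is to argue by contradiction: suppose the solution $X$ exists on a maximal interval $[0,T^*)$ with $T^* = +\infty$. Then by Lemma \ref{lem-welldefined}, the virial $V(t)$ defined in \eqref{defvir} is finite on $[0,T^*)$; by the second-derivative lemma we just proved, $\partial_t^2 V(t) \leq 16\mE(X_0)$ pointwise on this interval.

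First I would integrate this differential inequality twice. Since $\mE(X_0) < 0$, this gives
$$
V(t) \leq V(0) + V'(0)\, t + 8\mE(X_0)\, t^2
$$
for every $t$ in the interval of existence. The right-hand side is a polynomial of degree two in $t$ with strictly negative leading coefficient $8\mE(X_0) < 0$, so there exists a finite time $t_0 > 0$ at which it becomes strictly negative. On the other hand, $V(t) = \int_{\Omega \times \R^d} |x|^2 |X(t,x)|^2 \geq 0$ by definition, which yields the contradiction and forces $T^* < \infty$. The same reasoning applied to the time-reversed equation yields blow-up in the past if one wishes.

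The one nontrivial point to verify carefully is that the computation of $\partial_t^2 V$ can be carried out rigorously rather than only at the formal level. One should first perform the calculation with the truncated weight $\varphi_R$ of Lemma \ref{lem-welldefined} to ensure all boundary and integrability issues are controlled, then pass to the limit $R \to \infty$ using the finiteness of $V(0)$ together with the persistence of higher regularity noted in Remark \ref{rem-persistencehireg} (approximating $X_0$ by smoother data for which all manipulations are justified, and passing to the limit by continuity of the flow). Once the identity for $\partial_t^2 V$ is valid in the strong sense, the double integration above is immediate.

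I expect the main obstacle to be purely technical: ensuring that the decay of $X$ at infinity in $x$ is sufficient to justify the integration-by-parts steps in the computation of $I$ and $II$ in the previous lemma, and that the exchange of expectation, spatial integration, and time differentiation is legitimate. Once this regularity/approximation step is handled—essentially by truncating with $\varphi_R$, working on a dense class of smooth localised initial data via Proposition \ref{prop-lwp}, and then invoking continuity of the flow in $L^2(\Omega,H^1)$—the contradiction argument concludes in one line.
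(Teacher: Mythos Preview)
Your argument is correct and is exactly the classical Glassey virial contradiction the paper has in mind; in fact the paper gives no proof for this proposition at all, treating it as an immediate consequence of the preceding lemmas. Your write-up is therefore a faithful completion of what the paper leaves implicit, and your remarks on the truncation $\varphi_R$ and the approximation by regular data are the appropriate way to make the formal computation rigorous.
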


\section{Incidence at the operator level}\label{sec-incidence}

\subsection{Incidence at the operator level on the sphere and torus}

In this section, we prove the global well-posedness of \eqref{eqonop} on the sphere and torus.

Let $M \in \{\S^2,\S^3,\T^2,\T^3\}$.

\subsubsection{Uniqueness of laws}

In this subsection we prove that two solutions of \eqref{eqonrv} whose initial data have the same law have also the same law. For this setting, it is relevant to use Subsection \ref{subsec-interplaw}. Nevertheless, since the following technique is easier to expose in this setting rather than for the perturbed equation and since we require it for the perturbed equation, we choose to present it here.

\begin{lemma}\label{lem-lemlaw1} Let $X(t)$ be a solution of \eqref{eqonrv} with initial datum $X_0$ defined on the probability space $(\Omega, \mathcal F, P)$ and belonging to $L^2(\Omega, H^1(M))$. Let $(\omega_1,\omega_2) \in \Omega^2$. If $X_0(\omega_1) = X_0(\omega_2)$, then at all times $t$, $X(t,\omega_1) = X(t,\omega_2)$.
\end{lemma}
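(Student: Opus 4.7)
The plan is to exploit the crucial observation that the potential $V(t,x) := \E(|X(t,x)|^2)$ appearing in \eqref{eqonrv} depends only on $(t,x)$ and \emph{not} on $\omega$, so the nonlinear equation for $X$ becomes, for each fixed $\omega$, a \emph{linear} Schr\"odinger equation with a common time-dependent real potential $V$. Once this is in place, uniqueness for $\omega_1$ and $\omega_2$ with the same initial datum is just uniqueness for a linear problem.

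More concretely, I would first argue that for $P$-almost every $\omega$, the function $t\mapsto X(t,\omega)$ satisfies the Duhamel identity
\[
X(t,\omega) = S(t)X_0(\omega) - i\int_0^t S(t-\tau)\bigl(V(\tau,\cdot)\,X(\tau,\omega)\bigr)\,d\tau,
\]
with $V(\tau,x)=\E(|X(\tau,x)|^2)$ fixed independently of $\omega$. This is immediate from the construction of $X$ in Propositions \ref{prop-lwpS} and \ref{prop-lwpM3}, where the fixed point is built in $L^2(\Omega,\L_T)$ or $L^2(\Omega,X^{1,b}_T(M_3))$; applying Fubini one recovers a well-defined Duhamel equation for almost every $\omega$. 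In particular, for the two fixed samples $\omega_1,\omega_2$ where $X_0(\omega_1)=X_0(\omega_2)$, the functions $X(\cdot,\omega_1)$ and $X(\cdot,\omega_2)$ both solve the same linear Duhamel equation with potential $V$ and the same initial datum.

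Next, I would set $Z(t,x) := X(t,\omega_1,x) - X(t,\omega_2,x)$. By linearity of the above Duhamel identity in its last argument, $Z$ satisfies
\[
i\partial_t Z = -\lap Z + V(t,x)\,Z, \qquad Z(0)=0,
\]
in $\mathcal{C}([-T,T],H^1(M))$ on the local existence interval. Since $V$ is real-valued, a standard $L^2$ energy estimate gives
\[
\tfrac{d}{dt}\|Z(t)\|_{L^2(M)}^2 = 2\Im\bigl\langle Z,\, i\partial_t Z\bigr\rangle = 2\Im\bigl\langle Z,\,-\lap Z + VZ\bigr\rangle = 0,
\]
because $-\lap$ is self-adjoint on $H^1$ and $V$ is a real multiplication operator. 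Hence $\|Z(t)\|_{L^2}=\|Z(0)\|_{L^2}=0$, so $X(t,\omega_1)=X(t,\omega_2)$ on $[-T,T]$. Iterating this on successive intervals of local existence, together with the global bounds supplied by the conservation of $\mE$ and Proposition \ref{prop-gwpS}, yields equality for all $t\in\R$.

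The only subtlety I anticipate is justifying the energy identity rigorously: one must ensure that $Z$ has enough regularity for the formal computation $\tfrac{d}{dt}\|Z\|_{L^2}^2 = 2\Im\langle Z,i\partial_t Z\rangle$ to be legitimate. For $M_2$ this follows directly since $Z\in\mathcal{C}([-T,T],H^1(M_2))$ and $VZ\in L^1_tH^{-1}$, so the duality pairing makes sense. On $M_3$ one uses that the $X^{1,b}_T$ norm controls $\mathcal{C}_tH^1$ and combines this with the persistence of higher regularity from Remark \ref{rem-persistencehireg} together with an approximation argument, exactly as in the proof of conservation of $\mE$. This is the only mildly technical step; the conceptual content is simply that $\E(|X|^2)$ decouples the $\omega$-dependence.
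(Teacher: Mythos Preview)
Your proposal is correct and follows essentially the same approach as the paper: the key observation in both is that $V(t,x)=\E(|X(t,x)|^2)$ is independent of $\omega$, so for each fixed $\omega$ the map $X(\cdot,\omega)$ solves the \emph{linear} Schr\"odinger equation $i\partial_t u=-\lap u+Vu$, and uniqueness for this linear problem with common initial datum yields the conclusion. The paper simply asserts uniqueness ``in view of the previous sections,'' while you spell it out via the $L^2$ energy identity; this is extra detail rather than a different route.
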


\begin{proof} Let $\varphi(t,x) = \E(|X(t,x)|^2)$. Both $X(t,\omega_1)$ and $X(t,\omega_2)$ are solutions of
$$
i\partial_t u = -\lap u + \varphi(t,x) u
$$
with the same initial datum $u_0 = X_0(\omega_1) = X_0(\omega_2)$. In view of the previous sections, this ensures that $X(t,\omega_1) = X(t,\omega_2)$.
\end{proof}

\begin{definition}\label{def-deflaw} Given an  initial datum $X_0$ defined on the probability space $(\Omega, \mathcal F, P)$ and belonging to $L^2(\Omega, H^1(M))$, let $\sim_P$ be the equivalence relation on $\Omega$ defined as 
$$
\omega_1 \sim_P \omega_2 \Leftrightarrow X_0(\omega_1) = X_0(\omega_2).
$$
Let $(\Omega', \mathcal F', P')$ be the probability space $(\Omega, \mathcal F, P)$ quotiented by $\sim_P$, that is
\begin{eqnarray*}
\Omega' &=& \{cl(\omega)\, |\, \omega \in \Omega\},\\
\mathcal F' &=& \{cl( X_0^{-1}(A)) \,|\, A \textrm{ mesurable in } H^1(M)\}, \\
\forall C \in \mathcal F', \; P'(C)& =& P\Big(\bigcup_{c\in C} c\Big)
\end{eqnarray*}
where
\begin{eqnarray*}
cl(\omega) &=& \{\omega' \in \Omega \,|\, \omega' \sim_P \omega\}\\
cl(A) &=& \{cl(\omega) \,|\, \omega \in A\}.
\end{eqnarray*}
Finally, let $X'(t)$ be the random variable defined on $(\Omega', \mathcal F', P')$ and belonging to $L^2(\Omega',H^1(M))$  as $X'(t)(cl(\omega)) = X(t) (\omega)$.
\end{definition}

\begin{remark} The measure $P'$ is well-defined on $\mathcal F'$ and 
$$
P'(cl( X_0^{-1}(A)))= P(X_0^{-1}(A)).
$$
Indeed, if $\omega \in X_0^{-1}(A)$, then $cl(\omega) \subseteq X_0^{-1}(A)$.

The random variable $X'(t)$ is defined without ambiguity thanks to Lemma \ref{lem-lemlaw1}. It belongs to $L^2(\Omega', H^1(M))$ since 
$$
\E(\|X'(t)\|_{H^1}^2) = \int_{\R^+} P'(X'(t)^{-1}(B_{H^1}(0, \sqrt \lambda)^c))d\lambda
$$
where ${}^c$ stands for the complementary set. Given the definition of $P'$, this yields 
$$
\E(\|X'(t)\|_{H^1}^2) = \int_{\R^+} P(X(t)^{-1}(B_{H^1}(0, \sqrt \lambda)^c))d\lambda = \E(\|X(t)\|_{H^1}^2) < \infty.
$$
\end{remark}

\begin{lemma}\label{lem-lemlaw3} The law of $X'(t)$ is the same as the law of $X(t)$. \end{lemma}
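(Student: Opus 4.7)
The plan is to unravel the definitions and show that for every Borel set $A\subseteq H^1(M)$ the probability $P'(X'(t)^{-1}(A))$ equals $P(X(t)^{-1}(A))$. The key point, provided by Lemma \ref{lem-lemlaw1}, is that $X(t)^{-1}(A)$ is automatically a union of $\sim_P$-equivalence classes: if $\omega_1\sim_P\omega_2$ then $X(t,\omega_1)=X(t,\omega_2)$, so either both are in $X(t)^{-1}(A)$ or neither is. Consequently, at the level of the quotient $\Omega'$ we have
$$X'(t)^{-1}(A)=cl\bigl(X(t)^{-1}(A)\bigr)\quad\text{and}\quad\bigcup_{c\in cl(X(t)^{-1}(A))}c=X(t)^{-1}(A).$$

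Next I would check that $cl(X(t)^{-1}(A))$ actually lies in $\mathcal F'$. For this I invoke the continuity (hence measurability) of the flow in the initial datum, which was established in Propositions \ref{prop-lwpS} and \ref{prop-lwpM3} together with the global well-posedness of Proposition \ref{prop-gwpS}: the map $X_0\mapsto X(t)$ is continuous from $L^2(\Omega,H^1(M))$ to $L^2(\Omega,H^1(M))$, and in fact pointwise the value $X(t,\omega)$ depends only on $X_0(\omega)$ through a Borel measurable map $\Phi_t:H^1(M)\to H^1(M)$. Hence $X(t)^{-1}(A)=X_0^{-1}(\Phi_t^{-1}(A))$, and therefore $cl(X(t)^{-1}(A))=cl(X_0^{-1}(\Phi_t^{-1}(A)))$ belongs to $\mathcal F'$ by its definition.

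Once these two facts are in place, the identity of laws follows from the very definition of $P'$. Indeed,
$$P'\bigl(X'(t)^{-1}(A)\bigr)=P'\bigl(cl(X(t)^{-1}(A))\bigr)=P\Bigl(\bigcup_{c\in cl(X(t)^{-1}(A))}c\Bigr)=P\bigl(X(t)^{-1}(A)\bigr),$$
which is exactly the statement that the pushforward of $P'$ by $X'(t)$ coincides with the pushforward of $P$ by $X(t)$.

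The only delicate point is the measurability issue in the middle step, i.e.\ that $X(t)^{-1}(A)\in\sigma(X_0)$. This is not a calculation but a soft argument relying on the deterministic well-posedness: once we know that $X(t,\omega)$ is a Borel function of $X_0(\omega)$, everything else is a bookkeeping exercise with the quotient measure. The rest is essentially tautological, which is why the lemma, once unpacked, requires only a few lines.
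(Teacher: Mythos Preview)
Your proposal is correct and follows essentially the same approach as the paper's proof: both establish $X'(t)^{-1}(A)=cl(X(t)^{-1}(A))$, justify that this lies in $\mathcal F'$ via the continuity of the flow, and then apply the definition of $P'$. You are simply more explicit about the measurability step, making precise (through the map $\Phi_t$ coming from the linearised equation with potential $\varphi=\E(|X|^2)$) what the paper compresses into the phrase ``the flow of \eqref{eqonrv} is continuous.''
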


\begin{proof} This is due that for all measurable $A$ set in $H^1(M)$, we have 
$$
X'(t)^{-1}(A) = cl(X(t)^{-1}(A))
$$
and due to the definition of $P$. Note that $X(t)^{-1}(A)$ is measurable in $\Omega$ and $X'(t)^{-1}(A)$ measurable in $\Omega'$ because the flow of \eqref{eqonrv} is continuous.\end{proof}

\begin{lemma}\label{lem-law} Let $X_1$ and $X_2$ be solutions to \eqref{eqonrv} with initial datum $X_{1,0} \in \L^2(\Omega_1,H^1(M))$ and $X_{2,0}\in L^2(\Omega_2,H^1(M))$ which have the same law. Then, for all $t$, $X_1(t)$ and $X_2(t)$ have the same law.\end{lemma}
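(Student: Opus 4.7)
The plan is to express each solution as $X_i(t,\omega) = U^{\varphi_i}(t,0)[X_{i,0}(\omega)]$, where $U^{\psi}(t,0)$ denotes the linear evolution of $i\partial_t u = -\lap u + \psi(t,x) u$ and $\varphi_i(t,x) = \E(|X_i(t,x)|^2)$, then to observe that $\varphi_i$ satisfies a self-consistent equation depending only on the common initial law $\nu$, and finally to reduce uniqueness of this equation to the uniqueness statement in Proposition \ref{prop-gwpS}.

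Concretely, for each fixed $\omega \in \Omega_i$ the sample path $X_i(\cdot,\omega)$ solves a linear Schr\"odinger equation with potential $\varphi_i$ and initial datum $X_{i,0}(\omega)$, as already used in the proof of Lemma \ref{lem-lemlaw1}. This yields the representation $X_i(t,\omega) = U^{\varphi_i}(t,0)[X_{i,0}(\omega)]$. Denoting by $\nu$ the common law of $X_{1,0}$ and $X_{2,0}$ on $H^1(M)$, the change-of-variables formula gives
$$
\varphi_i(t,x) = \int_{\Omega_i} |U^{\varphi_i}(t,0)[X_{i,0}(\omega)](x)|^2 \, dP_i(\omega) = \int_{H^1(M)} |U^{\varphi_i}(t,0)[u](x)|^2 \, d\nu(u),
$$
a self-consistent equation involving only $\nu$ and the linear propagator. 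To establish uniqueness of solutions to this equation, suppose $\varphi$ and $\tilde \varphi$ both satisfy it; pick any probability space $(\Omega,P)$ carrying a random variable $\xi \in L^2(\Omega, H^1(M))$ of law $\nu$ (e.g.\ $(\Omega_1, P_1, X_{1,0})$) and set $Y(t) = U^\varphi(t,0)[\xi]$, $\tilde Y(t) = U^{\tilde\varphi}(t,0)[\xi]$. The self-consistency condition says precisely that $\E(|Y(t,x)|^2) = \varphi(t,x)$ and similarly for $\tilde Y$, so both $Y$ and $\tilde Y$ are solutions of \eqref{eqonrv} with initial datum $\xi$; Proposition \ref{prop-gwpS} then forces $Y = \tilde Y$, and hence $\varphi = \tilde \varphi$.

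It follows that $\varphi_1 = \varphi_2 =: \varphi$, so $X_1(t)$ and $X_2(t)$ are pushforwards of $\nu$ under the same deterministic continuous map $U^\varphi(t,0)$ and therefore share the same law. The main obstacle is rigorously justifying the pathwise representation and the change of variables: one needs $\varphi_i$ to be regular enough in $(t,x)$ for the linear Schr\"odinger equation with potential $\varphi_i$ to define a well-posed $H^1$-evolution and for $u \mapsto U^{\varphi_i}(t,0)[u]$ to be Borel measurable from $H^1(M)$ to itself. Both facts follow from the Sobolev embeddings available on $M \in \{\S^2, \S^3, \T^2, \T^3\}$ applied to $X_i \in L^2(\Omega, H^1(M))$, which ensure $\varphi_i$ is a bounded time-dependent potential and that the associated linear flow is a continuous operator on $H^1(M)$. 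Once this is in place, the remaining steps are the routine change of variables and the appeal to uniqueness.
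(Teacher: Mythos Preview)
Your argument is correct and takes a genuinely different route from the paper's. The paper first quotients each probability space $\Omega_i$ by the equivalence relation $\omega_1\sim\omega_2 \Leftrightarrow X_{i,0}(\omega_1)=X_{i,0}(\omega_2)$ (Definition~\ref{def-deflaw}, Lemma~\ref{lem-lemlaw3}), then builds an explicit measure-preserving map $\varphi:\Omega_1'\to\Omega_2'$ sending the fibre over $u_0$ to the fibre over $u_0$, and finally invokes uniqueness of the flow of \eqref{eqonrv} to conclude $X_2'(t)\circ\varphi=X_1'(t)$. You instead bypass the quotient entirely: you observe that the potential $\varphi_i=\E(|X_i|^2)$ satisfies a self-consistent fixed-point equation depending only on the common initial law $\nu$, reduce uniqueness of that fixed-point to the uniqueness in Proposition~\ref{prop-gwpS} by realising any two candidate potentials on a \emph{common} probability space, and then push $\nu$ forward by the common linear propagator $U^\varphi(t,0)$.

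Your approach is arguably more direct and avoids the measurability bookkeeping of the quotient construction; the paper's approach is more structural and transfers verbatim to the perturbed setting of Section~\ref{sec-WPE} (which is why the paper sets it up this way). One small point: in dimension $3$ the potential $\varphi_i$ is not literally a bounded function of $x$ since $H^1(M_3)\not\hookrightarrow L^\infty$; what you actually have is $\varphi_i\in L^1_{\mathrm{loc}}(\R_t,L^\infty(M_2))$ in the $M_2$ case and, in the $M_3$ case, enough regularity inherited from $X_i\in L^2(\Omega,X_T^{1,b})$ for the trilinear estimate~\eqref{trilinest} to make the linear problem $i\partial_t u=-\lap u+\varphi_i u$ well-posed in $H^1$. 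The paper itself takes this for granted (see the proof of Lemma~\ref{lem-law2}), so your level of detail matches the surrounding text.
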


\begin{proof}Thanks to Lemma \ref{lem-lemlaw3} and using the same notations, we can consider the random variables $X'_1$ and $X'_2$ instead of $X_1$ and $X_2$. Let $\varphi$ be the map from $\Omega_1'$ to $\Omega_2'$ defined as 
$$
\varphi(X_{0,1}^{-1}(\{u_0\}) )= X_{0,2}^{-1}(\{u_0\})
$$
for all $u_0 \in H^1(M)$. 

By construction, $X_{1,0}'  = X_{2,0}' \circ\varphi$ and $P'_2$  is the image measure of $P_1'$ under $\varphi$. 

By uniqueness of the flow of \eqref{eqonrv}, $X'_2(t) \circ \varphi = X'_1(t)$ and since $\varphi$ preserves the measure the law of $X'_2(t)$ is the same has the one of $X'_1(t)$. Therefore, thanks to Lemma \ref{lem-lemlaw3}, $X_1(t)$ and $X_2(t)$ have the same law.
\end{proof}

\subsubsection{Gaussian variables}

In this subsection, we prove that if $X_0$ is a Gaussian variable, then so is $X(t)$ the solution of \eqref{eqonrv} with initial datum $X_0$. What is more, we prove that if $\gamma$ is a solution of \eqref{eqonop} then there exists a Gaussian variable with covariance $\gamma$ that is a solution of \eqref{eqonrv}.

\begin{lemma}\label{lem-law2} Let $X_0$ be a Gaussian process of covariance $\gamma_0$ with $\textrm{Tr}((1-\lap)\gamma_0) < \infty$. Let $X(t)$ be the solution of \eqref{eqonrv} with initial datum $X_0$ then $X(t)$ is a Gaussian process.
\end{lemma}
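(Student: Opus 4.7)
The plan is to \emph{freeze the nonlinearity}: the crucial observation is that $\E(|X(t,x)|^2)$ is a deterministic function of $(t,x)$, so $X$ is really driven by a linear Schr\"odinger equation with a (deterministic) time-dependent potential, and propagating a Gaussian random variable by a bounded linear operator keeps it Gaussian.

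First, I would introduce the operator-level solution. Let $\gamma_0 = \E(|X_0\times X_0|)$; by Proposition \ref{XtoG} combined with the global well-posedness Proposition \ref{prop-gwpS} (or Theorem \ref{th-th1}), the operator-valued Cauchy problem \eqref{eqonop} with initial datum $\gamma_0$ admits a global solution $\gamma(t)$, and its diagonal $\rho_{\gamma(t)}(x)$ is a deterministic function of $(t,x)$. Then I would consider the \emph{linear} Cauchy problem
$$
i\partial_t Z = -\lap Z + \rho_{\gamma(t)}\, Z,\qquad Z(0)=X_0,
$$
and denote by $U(t)\colon L^2(M)\to L^2(M)$ its propagator. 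Because $\rho_{\gamma(t)}$ is real, $U(t)$ is unitary on $L^2(M)$; because $\mathrm{Tr}((1-\lap)\gamma_0)<\infty$ and the potential is a priori regular enough on $M$, a standard energy estimate shows $U(t)$ is bounded on $H^1(M)$.

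Next, I would set $Z(t,\omega):= U(t)\bigl(X_0(\omega)\bigr)$. Since $U(t)$ is a deterministic bounded linear operator, $Z(t)$ is still a Gaussian process: for every $v\in H^{-1}(M)$, $\an{Z(t),v}=\an{X_0,U(t)^*v}$ is a linear functional of the Gaussian $X_0$, hence Gaussian, and these joint Gaussian laws characterise $Z(t)$ as a Gaussian field. Its covariance operator is $\gamma_Z(t)=U(t)\gamma_0 U(t)^*$. A direct computation, paralleling the proof of Proposition \ref{XtoG}, shows that $\gamma_Z$ solves the \emph{linear} operator equation
$$
i\partial_t \gamma_Z = [-\lap + \rho_{\gamma(t)},\, \gamma_Z],\qquad \gamma_Z(0)=\gamma_0,
$$
where now $\rho_{\gamma(t)}$ is treated as a fixed external potential. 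But $\gamma$ itself satisfies this same linear Cauchy problem, so uniqueness for the linear problem forces $\gamma_Z(t)=\gamma(t)$; in particular $\rho_{\gamma_Z(t)}=\rho_{\gamma(t)}=\E(|Z(t)|^2)$.

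Putting this together, $Z$ actually satisfies the nonlinear equation \eqref{eqonrv} with initial datum $X_0$, so by the uniqueness in Proposition \ref{prop-gwpS} we get $X(t)=Z(t)$ almost surely, which gives Gaussianity. The main technical point will be the clean construction and $H^1$-mapping properties of the propagator $U(t)$ with the (only moderately regular) potential $\rho_{\gamma(t)}$; once this linear theory is in place, the rest is algebraic manipulation plus invocation of uniqueness for the linear operator equation and for \eqref{eqonrv}.
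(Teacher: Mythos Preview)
Your approach is correct and shares the paper's key idea: freeze the deterministic potential $\varphi(t,x)=\E(|X(t,x)|^2)$, let $U(t)$ be the linear propagator for $i\partial_t u=(-\lap+\varphi)u$, and use that bounded linear images of Gaussians are Gaussian. The paper's proof is shorter because it skips your detour through the operator equation: since $X$ itself already solves $i\partial_t X=(-\lap+\varphi)X$, uniqueness for this \emph{linear} Cauchy problem gives $X(t)=U(t)X_0$ directly, and then one line with the characteristic functional,
\[
\E\bigl(e^{i\an{\lambda,X(t)}}\bigr)=\E\bigl(e^{i\an{U(t)^*\lambda,X_0}}\bigr)=e^{-\an{\lambda,\,U(t)\gamma_0U(t)^*\lambda}},
\]
finishes the argument---there is no need to introduce an auxiliary $Z$, compare $\gamma_Z$ with $\gamma$ via uniqueness of the linear operator equation, or invoke uniqueness for the nonlinear problem \eqref{eqonrv}.
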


\begin{proof} Write $\varphi(t,x) = \E(|X(t,x)|^2)$. By Propositions \ref{prop-lwpS}, and \ref{prop-lwpM3} one gets that the equation
$$
i\partial_t u = -\lap u + \varphi u
$$
is well-posed in $H^1$. Let $U(t)$ be the flow of this equation, it is linear and continuous on $H^1$. Let $\lambda \in H^{-1}$. Since by uniqueness of the flow we have $X(t) = U(t)X_0$, 
$$
\E(e^{i\an{\lambda, X(t)}}) = \E(e^{i\an{U^*(t)\lambda,X_0}}) = e^{-\an{U^*(t)\lambda,\gamma_0 U^*(t)\lambda}}= e^{-\an{\lambda,U(t)\gamma_0 U^*(t)\lambda}}.
$$
Since $U(t)\gamma_0U^*(t)$ is a positive operator, we get that $X(t)$ is a Gaussian process of covariance $U(t)\gamma_0U^*(t)$.
\end{proof}

\begin{definition}\label{def-dist} Let $\Sigma$ be the set of non negative operators $\gamma_0$ such that $\textrm{Tr}((1-\lap)\gamma_0)$ is finite on $M$. Let $d$ be the distance on this set defined as 
$$
d(\gamma_1,\gamma_2) = d_2(\nu_1,\nu_2)
$$
where $d_2$ is the Wasserstein distance defined in Remark \ref{rem-wass} and $\nu_i$ is the law of the Gaussian process with covariance $\gamma_i$. \end{definition}

\begin{remark} The Wasserstein distance may also be defined as 
$$
d_2(\nu_1,\nu_2) = \inf_{X_i \sim \gamma_i} \|X_1-X_2\|_{L^2(\Omega, H^1(M))}
$$
where $\sim$ stands for ``is a Gaussian random field of covariance''.

By $\Sigma$, we now denote the metric space $(\Sigma ,d)$.
\end{remark}

\begin{lemma}\label{lem-gau1} Assume that $\gamma \in \mathcal C(\R,\Sigma)$ is a solution to \eqref{eqonop}. Then there exists a probability space $\Omega$ and a Gaussian variable $X \in \mathcal C(\R,L^2(\Omega,H^1(M)))$ solution of \eqref{eqonrv} and of covariance $\gamma$. \end{lemma}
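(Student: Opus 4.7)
The idea is to build $X$ explicitly by transporting a Gaussian initial datum through the linear Schr\"odinger flow whose (real) time-dependent potential is $\rho_\gamma(t,\cdot)$. The overall scheme has three moves: (i) diagonalize $\gamma$ by conjugation with a unitary propagator $V(t)$; (ii) manufacture a Gaussian $X_0$ with the correct covariance using Proposition \ref{prop-CI}; (iii) set $X(t) := V(t) X_0$ and check that this random variable does what is wanted.

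More precisely, since $\gamma \in \mathcal C(\R, \Sigma)$ with $\textrm{Tr}((1-\lap)\gamma(t))<\infty$, one first extracts from the assumption that $\rho_\gamma(t,\cdot)$ is a real function on the compact manifold $M$ with enough spatial regularity (via trace-class considerations and Sobolev embedding on $\S^{2,3}$ and $\T^{2,3}$) and enough time continuity to fall into a classical well-posedness class for the linear equation $i\partial_t u = (-\lap + \rho_\gamma(t,\cdot)) u$. Let $V(t)$ be the associated propagator: $V(0)=I$, $V(t)$ unitary on $L^2(M)$, bounded on $H^1(M)$ continuously in $t$, with $V(\cdot)u_0 \in \mathcal C(\R, H^1(M))$ for every $u_0 \in H^1(M)$. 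Using $i\partial_t V^* = -V^* H$ with $H := -\lap + \rho_\gamma$, a direct computation gives $i\partial_t(V^* \gamma V) = -V^* H\gamma V + V^*[H,\gamma]V + V^* \gamma HV = 0$, so
$$
\gamma(t) = V(t)\gamma_0 V(t)^*, \qquad \gamma_0 := \gamma(0).
$$

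Next, apply Proposition \ref{prop-CI} to the initial datum $\gamma_0 \in \Sigma$ to obtain a probability space $(\Omega, \mathcal A, P)$ and a centred complex Gaussian $X_0 \in L^2(\Omega, H^1(M))$ with $\E(\an{X_0, v} X_0) = \gamma_0(v)$. Define $X(t) := V(t) X_0$, where $V(t)$ acts only on the spatial variable. Because $V(t)$ is a deterministic bounded linear map, $X(t)$ is still a Gaussian random variable in $L^2(\Omega, H^1(M))$, and for every $v \in L^2(M)$,
$$
\E\bigl(\an{X(t), v} X(t)\bigr) = V(t)\E\bigl(\an{X_0, V(t)^* v} X_0\bigr) = V(t)\gamma_0 V(t)^* v = \gamma(t) v.
$$
In particular $\E(|X(t,x)|^2) = \rho_\gamma(t,x)$, which, combined with the defining equation of $V$, gives $i\partial_t X = (-\lap + \rho_\gamma(t,\cdot)) X = (-\lap + \E(|X|^2)) X$, so $X$ solves \eqref{eqonrv}. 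Time continuity follows from the identity $\|X(t)\|_{L^2(\Omega, H^1(M))}^2 = \textrm{Tr}((1-\lap)\gamma(t))$ together with $\gamma \in \mathcal C(\R, \Sigma)$ and the strong continuity of $t \mapsto V(t) X_0$ in $H^1(M)$.

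\textbf{Main obstacle.} The construction is routine once the propagator $V(t)$ is in hand, so the hard part is step (i): making sense of $V(t)$ using only $\gamma \in \mathcal C(\R, \Sigma)$. One must show that $\rho_\gamma(t,\cdot)$ is continuous in $t$ with values in a space that allows classical Cauchy theory for the time-dependent linear Schr\"odinger equation (and that preserves $H^1$ with an energy estimate). This amounts to quantitative control of $\rho_\gamma$ by $\textrm{Tr}((1-\lap)\gamma)$, via Sobolev embedding applied to the kernel of $\gamma^{1/2}$, and to a continuity statement for $\rho_\gamma$ under the Wasserstein distance $d$ on $\Sigma$. These are essentially deterministic, but they are the step where the hypotheses of the lemma are actually used.
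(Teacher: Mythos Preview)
Your argument is correct and coincides with the paper's own proof: both introduce the linear propagator $V(t)$ (the paper calls it $U(t)$) for $i\partial_t u=(-\lap+\rho_\gamma)u$, transport a Gaussian initial datum $X_0$ with covariance $\gamma_0$ through it, and identify $\gamma_X(t)=V(t)\gamma_0 V(t)^*$ with $\gamma(t)$ by uniqueness of the conjugated linear operator equation, whence $\E(|X|^2)=\rho_\gamma$ and $X$ solves \eqref{eqonrv}. Your write-up is in fact more careful than the paper's, which does not spell out the regularity of $\rho_\gamma$ needed to justify the propagator.
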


\begin{proof} Let $\varphi = \rho_{\gamma}$ and let $X$ be the solution to 
$$
i\partial_t X = (-\lap + \varphi) X
$$
with initial datum $X_0$ a Gaussian variable with covariance $\gamma_0$. Its covariance $\gamma_X$ is the unique solution to the linear equation 
$$
i\partial_t \gamma_X = [-\lap + \varphi, \gamma_X]
$$
with initial datum $\gamma_{X_0} = \gamma(t=0)$.

Indeed, let $U(t)$ the flow of the linear equation on $u$ : $i\partial_t u = (-\lap + \varphi) u$. The map $U$ is invertible. Hence, since,
$$
i\partial_t(U(t)^* \gamma_X U(t)) = 0
$$
the equation $i\partial_t \gamma_X = [-\lap + \varphi, \gamma_X]$ has a unique solution. 

Since $\gamma$ is also a solution to $i\partial_t \gamma_X = [-\lap + \varphi, \gamma_X]$ with initial datum $\gamma(t=0)$ we get that $\gamma_X = \gamma$, thus $\E(|X|^2) = \rho$, which ensures that $X$ is a sol to \eqref{eqonrv}.
\end{proof}

\subsubsection{Global well-posedness}

\begin{corollary}[of Proposition \ref{prop-gwpS}] \label{cor-gwpS}Let $\Psi$ be the map from $\Sigma$ to $\mathcal C( \R, \Sigma)$ such that $\Psi(t) (\gamma_0) = \gamma_{X(t)}$ where $X(t)$ is the solution to \eqref{eqonrv} with initial datum $X_0$ the Gaussian random process with covariance operator $\gamma_0$. The map $\Psi$ is well-defined, it defines a solution to \eqref{eqonop} and it is continuous for the distance $d$. Besides, $\Psi(t)\gamma_0$ is the unique solution to \eqref{eqonop} with initial datum $\gamma_0$.
\end{corollary}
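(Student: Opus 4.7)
The plan is to reduce each assertion to a property of the random-variable equation \eqref{eqonrv}, then transfer through Proposition \ref{XtoG} and the identification of $(\Sigma, d)$ with a subset of laws of Gaussian fields.

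First, I would verify that $\Psi$ is well-defined and maps into $\mathcal{C}(\R, \Sigma)$. Given $\gamma_0 \in \Sigma$, Proposition \ref{prop-CI} provides a Gaussian $X_0 \in L^2(\Omega, H^1(M))$ with covariance $\gamma_0$; Proposition \ref{prop-gwpS} yields a unique global solution $X \in \mathcal{C}(\R, L^2(\Omega, H^1(M)))$ of \eqref{eqonrv}; and Lemma \ref{lem-law2} ensures that $X(t)$ remains Gaussian at each time. The covariance $\gamma_{X(t)}$ lies in $\Sigma$ because $\textrm{Tr}((1-\lap)\gamma_{X(t)}) = \|X(t)\|^2_{L^2(\Omega, H^1)}$ is finite. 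Independence of $\gamma_{X(t)}$ from the auxiliary Gaussian realisation $X_0$ follows from the fact that two centred Gaussian fields with the same covariance share the same law, combined with Lemma \ref{lem-law}, which propagates this equality of laws under the flow. That $\gamma_{X(t)}$ solves \eqref{eqonop} is Proposition \ref{XtoG}, and continuity of $t \mapsto \gamma_{X(t)}$ in $(\Sigma, d)$ is immediate from the continuity of $t \mapsto X(t)$ in $L^2(\Omega, H^1)$ combined with the inequality $d(\gamma_{X(t)}, \gamma_{X(s)}) \leq \|X(t) - X(s)\|_{L^2(\Omega, H^1)}$ recorded in Remark \ref{rem-wass}.

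Second, for uniqueness of the solution to \eqref{eqonop} with datum $\gamma_0$, suppose $\tilde\gamma \in \mathcal{C}(\R, \Sigma)$ is another such solution. Lemma \ref{lem-gau1} lifts $\tilde\gamma$ to a Gaussian $\tilde X \in \mathcal{C}(\R, L^2(\Omega, H^1(M)))$ solving \eqref{eqonrv} whose covariance operator at each time equals $\tilde\gamma(t)$. The initial datum $\tilde X(0)$ is a centred Gaussian with covariance $\gamma_0$, so it has the same law as the $X_0$ used to construct $\Psi(t)\gamma_0$. By Lemma \ref{lem-law}, the two solutions have the same law at every time; consequently $\tilde\gamma(t) = \gamma_{\tilde X(t)} = \Psi(t)\gamma_0$.

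Third, for continuity of $\Psi(t)$ in the initial datum, given $\gamma_0^1, \gamma_0^2 \in \Sigma$ with $d(\gamma_0^1, \gamma_0^2) \leq \delta$, I would for any $\eta > 0$ realise two Gaussian fields $X_0^1, X_0^2$ of respective covariances $\gamma_0^1, \gamma_0^2$ on a common probability space with $\|X_0^1 - X_0^2\|_{L^2(\Omega, H^1)} \leq \delta + \eta$. The iterated continuity estimate for \eqref{eqonrv} derived in Subsection \ref{subsec-interplaw} then yields $\|X^1(t) - X^2(t)\|_{L^2(\Omega, H^1)} \leq C e^{cR^N(1+|t|)} (\delta + \eta)$, with $R$ controlling the $L^2(\Omega, H^1)$ norms of the initial data, and Remark \ref{rem-wass} upgrades this into a bound on $d(\Psi(t)\gamma_0^1, \Psi(t)\gamma_0^2)$; letting $\eta \to 0$ concludes.

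The single delicate point in an otherwise mechanical chain is the construction of near-optimal Gaussian couplings in the Wasserstein step: one must verify that the infimum in the definition of $d(\gamma_0^1, \gamma_0^2)$ is realised (or approached) within the class of couplings that remain Gaussian, so that Lemma \ref{lem-law2} can be applied to both components simultaneously. This can be done by working with the spectral decompositions of $\gamma_0^1$ and $\gamma_0^2$ and constructing $(X_0^1, X_0^2)$ as a jointly Gaussian vector built from a suitable family of independent complex Gaussians; everything else in the argument is a routine transcription of the well-posedness results already proved for \eqref{eqonrv}.
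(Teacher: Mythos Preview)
Your proof is correct and follows essentially the same route as the paper: build $X_0$ from Proposition \ref{prop-CI}, solve globally via Proposition \ref{prop-gwpS}, preserve Gaussianity via Lemma \ref{lem-law2}, pass to operators via Proposition \ref{XtoG}, and obtain uniqueness from Lemma \ref{lem-gau1} together with Lemma \ref{lem-law}; the paper's continuity argument is exactly the infimum-over-couplings step you spell out with your $\eta$ parameter. Your final ``delicate point'' is in fact a non-issue: any coupling of the Gaussian laws $\nu_1,\nu_2$ has, by definition of a coupling, marginals equal to those laws, so each $X_0^i$ is automatically Gaussian with covariance $\gamma_0^i$, and Lemma \ref{lem-law2} only requires each component individually to be Gaussian---no joint Gaussianity is needed, and the spectral construction you sketch is unnecessary (the paper simply writes ``taking the infimum over the couples $(X_1,X_2)$'' without further comment).
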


\begin{proof} Because of Proposition \ref{prop-CI} we get that an initial datum $\gamma_0 \in \Sigma$ at the operator level such that $\textrm{Tr}((1-\lap)\gamma_0)$ is finite gives an initial datum at the level of random variables $X_0$ belonging to $L^2(\Omega, H^1(M))$. Thanks to Proposition \ref{prop-gwpS} we get a solution $X$ of \eqref{eqonrv}, and thanks to Proposition \ref{XtoG}, we get a solution $\gamma$ to the equation \eqref{eqonop}. We remark that thanks to Lemma \ref{lem-law}, one can take any Gaussian $X_0$ with covariance $\gamma_0$ as the law of $X(t)$ depends only on the law of $X_0$. Hence $\Psi(t)$ is well-defined. It is continuous in time and in the initial datum for the following reason : the distance between $\gamma_{X_1(t)}$ and $\gamma_{X_2(t)}$ is controlled by the norm of $X_1(t) - X_2(t)$. Indeed, $X_i$ is a Gaussian process by Lemma \ref{lem-law2}, therefore
$$
d(\gamma_1(t_1),\gamma_2(t_2)) \leq \|X_1(t_1) - X_2(t_2)\|_{H^1(M)} 
$$
where $\gamma_i(t)$ is equal to $\gamma_{X_i(t)}$.

The continuity of the solution $X(t)$ in both time and initial datum gives the result. Indeed, take any Gaussian process $X_i$ with covariance $\gamma_i$ and any couple of times $t_1,t_2$, we have 
$$
d(\gamma_1(t_1),\gamma_1(t_2)) \leq \|X_1(t_1) - X_1(t_2)\|_{L^2(\Omega, H^1)}\leq C(X_1)|t_1-t_2|^\alpha
$$
for some $\alpha > 0$ and $C(X_1) = C(\gamma_1)$ is a constant depending only on $\gamma_1$. What is more,
$$
d(\gamma_1(t_1),\gamma_2(t_1)) \leq \|X_1(t_1) - X_2(t_1)\|_{L^2(\Omega, H^1)}\leq C(t_1)\|X_1-X_2\|_{L^2(\Omega,H^1(M))}
$$
and by taking the infimum over the couples $(X_1,X_2)$ we get the result.

For the uniqueness of the solution, let $\gamma_1$ and $\gamma_2$ be two solutions of \eqref{eqonop} with the initial datum $\gamma_0$. For $i=1,2$, there exists $X_i(t)$ a solution of \eqref{eqonrv} which is a Gaussian variable of covariance $\gamma_i$. For $i = 1,2$, $X_i(t=0)$ is a Gaussian variable of covariance $\gamma_0$. Hence $X_1(t=0)$ and $X_2(t=0)$ have the same law. Therefore $X_1(t)$ and $X_2(t)$ too, which ensures that $\gamma_1 = \gamma_2$ and hence the uniqueness of the solution of \eqref{eqonop}.
\end{proof}

\begin{remark} One could rewrite the corollary \ref{cor-gwpS} as : the equation \eqref{eqonop} is globally well-posed in $\mathcal C(\R,\Sigma)$.
\end{remark}

\subsection{Global well-posedness on the Euclidean space}

Let $f$ be a bounded function on $\R^d$ such that $\an k f(k) \in L^2(\R^d)$ and let $Y_f$ be the equilibrium corresponding to $f$ that is 
$$
Y_f(t,x) = \int f(k) e^{i(m+k^2)t}e^{ikx}dW_k
$$
with $m = \int |f(k)|^2 dk$.

This random variable defines an equilibrium for \eqref{eqonrv} and the operator $\gamma_f = \gamma_{Y_f}$ is a stationary solution for \eqref{eqonop}. Indeed, $\gamma_{f}$ is the Fourier multiplier by $|f(k)|^2$. Hence it commutes with the Laplacian and $\rho_{\gamma_{f}} = m$.

In this section, we prove the global well-posedness of \eqref{eqonop} around equilibria $\gamma_f$, that is, we prove global well-posedness of the equation
\begin{equation}\label{eqonQ}
i\partial_t Q = [-\lap ,Q] + [\rho_{\gamma_f+Q},\gamma_f + Q]
\end{equation}
where $Q$ is not necessarily non-negative but $\gamma_f + Q$ is.

Let $M \in \{\R^2,\R^3\}$.

\subsubsection{Uniqueness of laws}

In this subsection we prove that two solutions of \eqref{eqonrv} whose initial data have the same law have also the same law.

\begin{lemma}\label{lem-lemlaw11} Let $X(t)$ be a solution of \eqref{eqonrv} with initial datum $X_0=Y_0+Z_0$ defined on the probability space $(\Omega, \mathcal F, P)$ and such that $Y_0$ has the same law as $Y_f(t=0)$ and such that $Z_0$ belongs to $L^2(\Omega, H^1(M))$. Write $Y(t) = e^{-it(-\lap + m)}Y_0$ and $X(t) = Y(t) + Z(t)$. Assume $Z \in \mathcal C(\R,(L^2(\Omega, H^1(M)))$. Let $(\omega_1,\omega_2) \in \Omega^2$. If $(Y_0,Z_0)(\omega_1) = (Y_0,Z_0)(\omega_2)$, then at all times $t$, $(Y,Z)(t,\omega_1) = (Y,Z)(t,\omega_2)$.
\end{lemma}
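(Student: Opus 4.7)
The approach mirrors the proof of Lemma \ref{lem-lemlaw1}, exploiting the fact that once we look at one realization at a time, the equation driving $X$ is linear with a \emph{deterministic} (in $\omega$) potential, so uniqueness for the underlying flow transports pointwise equalities from time $0$ to all times.

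First I would handle $Y$ trivially: since $Y(t,\omega) = e^{-it(-\lap + m)} Y_0(\omega)$ is defined by applying a fixed unitary group (independent of $\omega$) to $Y_0$, the hypothesis $Y_0(\omega_1) = Y_0(\omega_2)$ immediately gives $Y(t,\omega_1) = Y(t,\omega_2)$ for every $t$. Next I would introduce $\varphi(t,x) = \E(|X(t,x)|^2)$; because $\varphi$ is an expectation, it depends only on $(t,x)$ and not on $\omega$. Since $X$ solves \eqref{eqonrv}, each realization $X(\cdot,\omega)$ satisfies the linear equation
$$
i\partial_t u = -\lap u + \varphi(t,x)\, u,
$$
with initial datum $X_0(\omega) = Y_0(\omega) + Z_0(\omega)$. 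The assumption $(Y_0, Z_0)(\omega_1) = (Y_0, Z_0)(\omega_2)$ yields $X_0(\omega_1) = X_0(\omega_2)$, and uniqueness for this linear equation — obtained exactly as in Proposition \ref{prop-lwp} by freezing the coefficient $\varphi$ and running the same contraction argument on the difference $X(\cdot,\omega_1) - X(\cdot,\omega_2)$ — gives $X(t,\omega_1) = X(t,\omega_2)$ for all $t$ in the interval of existence. Subtracting yields $Z(t,\omega_1) = X(t,\omega_1) - Y(t,\omega_1) = X(t,\omega_2) - Y(t,\omega_2) = Z(t,\omega_2)$, which is the claim.

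The only technical point I would expect to spend care on is checking that the linear equation with potential $\varphi$ genuinely admits a unique solution pathwise. Writing $\varphi = m + \E(|Z|^2) + 2\,\re \E(\overline{Y} Z)$, the constant $m$ is harmless, and the hypothesis $Z \in \mathcal C(\R, L^2(\Omega, H^1(M)))$ together with Cauchy--Schwarz in $\omega$ controls $\|\E(|Z|^2)\|_{L^\infty_x}$ and $\|\E(\overline{Y}Z)\|_{L^\infty_x}$ via Sobolev embedding (using $\an k f \in L^2$ for the $Y$ factor). This supplies enough regularity on $\varphi$ for a standard energy estimate on $X(\cdot,\omega_1) - X(\cdot,\omega_2)$, or equivalently for rerunning the fixed-point argument of Proposition \ref{prop-lwp} applied to this linear auxiliary problem, which closes the argument.
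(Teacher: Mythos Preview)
Your proposal is correct and follows essentially the same route as the paper. The only cosmetic difference is that the paper writes the (affine) linear equation directly for $Z$, namely $i\partial_t u = (m-\lap)u + \varphi(t,x)\,(u+Y(t,\omega_1))$ with $\varphi = \E(|X|^2)-m$, and invokes uniqueness there, whereas you phrase it for $X$ and then pass to the difference; since the difference $X(\cdot,\omega_1)-X(\cdot,\omega_2)$ equals $Z(\cdot,\omega_1)-Z(\cdot,\omega_2)\in H^1$, the two formulations are equivalent and the uniqueness argument is the same.
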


\begin{proof} First, if $Y_0(\omega_1) = Y_0(\omega_2)$, then $Y(t,\omega_1) = Y(t,\omega_2)$. Let $\varphi(t,x) = \E(|X(t,x)|^2)-m$. Both $Z(t,\omega_1)$ and $Z(t,\omega_2)$ are solutions of
$$
i\partial_t u = (m-\lap) u + \varphi(t,x) (u+Y(t,\omega_1))
$$
with the same initial datum $u_0 = Z_0(\omega_1) = Z_0(\omega_2)$. In view of the previous sections, this ensures that $Z(t,\omega_1) = Z(t,\omega_2)$.
\end{proof}

\begin{definition}\label{def-deflawlaw} Given an  initial datum $X_0=Y_0+Z_0$ defined on the probability space $(\Omega, \mathcal F, P)$ and with $Z_0$ belonging to $L^2(\Omega, H^1(M))$, and $Y_0$ with the same law as $Y_f(t=0)$. Let $\sim_P$ be the equivalence relation on $\Omega$ defined as 
$$
\omega_1 \sim_P \omega_2 \Leftrightarrow (Y_0,Z_0)(\omega_1) = (Y_0,Z_0)(\omega_2).
$$
Let $(\Omega', \mathcal F', P')$ be the probability space $(\Omega, \mathcal F, P)$ quotiented by $\sim_P$, and let $Z'(t)$ be the random variable defined on $(\Omega', \mathcal F', P')$ and belonging to $L^2(\Omega',H^1(M))$  as $Z'(t)(cl(\omega)) = Z(t) (\omega)$ and let $Y'(t)(cl(\omega)) = Y(t) (\omega)$
\end{definition}

\begin{remark}
The random variable $Z'(t)$ is defined without ambiguity thanks to Lemma \ref{lem-lemlaw11}. It belongs to $L^2(\Omega', H^1(M))$.
\end{remark}

\begin{lemma}\label{lem-lemlaw33} The law of $X'(t)$ is the same as the law of $X(t)$. \end{lemma}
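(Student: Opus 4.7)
The plan is to mirror the proof of Lemma \ref{lem-lemlaw3}, adjusting only for the fact that $X(t) = Y(t) + Z(t)$ is now controlled by the joint datum $(Y_0, Z_0)$ rather than by $X_0$ alone. The central observation is that, by construction, the canonical projection $\pi : \Omega \to \Omega'$, $\omega \mapsto cl(\omega)$, pushes $P$ forward onto $P'$, so it suffices to show that $X(t)$ factors through $\pi$.

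First I would apply Lemma \ref{lem-lemlaw11}: if $\omega_1 \sim_P \omega_2$ then $Y(t,\omega_1) = Y(t,\omega_2)$ and $Z(t,\omega_1) = Z(t,\omega_2)$ for all $t$, hence $X(t,\omega_1) = X(t,\omega_2)$. This is precisely the compatibility used in Definition \ref{def-deflawlaw} to define $Y'(t)$ and $Z'(t)$, and it yields a well-defined $X'(t) := Y'(t) + Z'(t)$ on $\Omega'$ satisfying $X(t) = X'(t) \circ \pi$.

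Next, for any Borel set $A$ in the target space of $X(t)$, the factorisation gives $X(t)^{-1}(A) = \pi^{-1}(X'(t)^{-1}(A))$, so $X(t)^{-1}(A)$ is $\sim_P$-saturated and equals $\bigcup_{c \in X'(t)^{-1}(A)} c$. Applying the definition of $P'$ as the measure induced by $P$ under $\pi$ then gives $P'(X'(t)^{-1}(A)) = P(X(t)^{-1}(A))$, which is the statement of the lemma.

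The main (and rather minor) obstacle is the measurability check: one must verify both that $X(t)^{-1}(A) \in \mathcal F$ and that $X'(t)^{-1}(A) \in \mathcal F'$. The former follows from the continuity of the $Z$-flow established in Propositions \ref{prop-lwp} and \ref{prop-gwp}, combined with the deterministic evolution $Y(t) = e^{-it(-\lap + m)} Y_0$, which together show that $X(t)$ is a measurable function of $(Y_0, Z_0)$. The latter is automatic from the definition of $\mathcal F'$ as the quotient $\sigma$-algebra on $\Omega'$, which, exactly as in Definition \ref{def-deflaw}, is generated by the $\pi$-images of the saturated measurable sets of $\Omega$.
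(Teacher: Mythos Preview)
Your proposal is correct and follows essentially the same approach as the paper: the paper gives no explicit proof for Lemma~\ref{lem-lemlaw33}, relying instead on the analogous Lemma~\ref{lem-lemlaw3}, whose proof is exactly the identity $X'(t)^{-1}(A) = cl(X(t)^{-1}(A))$ combined with the definition of $P'$ and the continuity of the flow for measurability. Your write-up is somewhat more careful than the paper's, spelling out the factorisation $X(t) = X'(t)\circ\pi$ and the saturation of $X(t)^{-1}(A)$, but the underlying argument is identical.
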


\begin{lemma}\label{lem-lawlaw} Let $X_1$ and $X_2$ be solutions to \eqref{eqonrv} written $X_i = Y_i + Z_i$ with initial datum $X_{1,0} $ and $X_{2,0}$ which have the same law. The random variables $Y_i$ satisfy $Y_i(t) = e^{-it(m-\lap)}Y_{0,i}$ with $Y_{0,i}$ a random variable with the same law as $Y_f(t=0)$. Then, for all $t$, $X_1(t)$ and $X_2(t)$ have the same law.\end{lemma}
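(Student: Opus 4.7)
The plan is to mimic the proof of Lemma \ref{lem-law}, with the key modification that the equivalence relation has to be taken on the pair $(Y_0,Z_0)$ rather than on $X_0$ alone. Note first that in view of Definition \ref{def-deflawlaw}, the hypothesis ``$X_{1,0}$ and $X_{2,0}$ have the same law'' must be understood, given the prescribed splitting, as saying that the pairs $(Y_{0,1},Z_{0,1})$ and $(Y_{0,2},Z_{0,2})$ share a common joint law on $L^2_{\mathrm{loc}}(M)\times H^1(M)$. I would begin by passing, for $i=1,2$, to the quotient spaces $(\Omega_i',\mathcal F_i',P_i')$ and the corresponding random variables $Y_i'(t), Z_i'(t), X_i'(t) = Y_i'(t) + Z_i'(t)$ of Definition \ref{def-deflawlaw}; by Lemma \ref{lem-lemlaw33} it is enough to prove that $X_1'(t)$ and $X_2'(t)$ have the same law for every $t$.

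Next I would define the candidate transfer map $\varphi\colon \Omega_1'\to \Omega_2'$ by
$$
\varphi\bigl((Y_{0,1},Z_{0,1})^{-1}(\{(y_0,z_0)\})\bigr) = (Y_{0,2},Z_{0,2})^{-1}(\{(y_0,z_0)\}),
$$
for every $(y_0,z_0)$ in the image of the initial data. By the very construction of the quotient, $\varphi$ is a well-defined bijection between the images, and $(Y_{0,1}',Z_{0,1}') = (Y_{0,2}',Z_{0,2}')\circ \varphi$. The measure-preservation of $\varphi$ is a direct consequence of the equality of the joint laws of the initial pairs together with the definition of $P_i'$ as the pushforward of $P_i$ under the quotient map.

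I would then propagate $\varphi$ forward in time. Since $Y_i(t) = e^{-it(m-\lap)}Y_{0,i}$ is a deterministic function of $Y_{0,i}$, we immediately get $Y_1'(t) = Y_2'(t)\circ\varphi$. For the perturbation, Lemma \ref{lem-lemlaw11} establishes pathwise uniqueness for the equation satisfied by $Z$ once the function $\varphi(t,x) = \E(|X(t,x)|^2)-m$ and the realization $Y(t,\omega)$ are fixed. Lemma \ref{lem-lemlaw33} gives $\E(|X_1'(t,\cdot)|^2) = \E(|X_1(t,\cdot)|^2) = \E(|X_2(t,\cdot)|^2) = \E(|X_2'(t,\cdot)|^2)$, so the two equations driving $Z_1'$ and $Z_2'\circ\varphi$ coincide, with identical initial data. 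Pathwise uniqueness then yields $Z_1'(t) = Z_2'(t)\circ\varphi$, and adding the two identities gives $X_1'(t) = X_2'(t)\circ\varphi$.

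The conclusion follows: since $\varphi$ is measure-preserving, $X_1'(t)$ and $X_2'(t)$ have the same law, hence so do $X_1(t)$ and $X_2(t)$ by Lemma \ref{lem-lemlaw33}. The main obstacle I expect is the bookkeeping around the equality of expectations $\E(|X_1(t)|^2) = \E(|X_2(t)|^2)$ used as input to the uniqueness argument for $Z$: one must check that this already follows from the equality of the laws of $X_{0,1}$ and $X_{0,2}$, which is a mild circularity that is broken by viewing the argument inductively in time through the local well-posedness on successive intervals provided by Proposition \ref{prop-lwp} and Proposition \ref{prop-gwp}.
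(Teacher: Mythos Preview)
Your approach is essentially the paper's: pass to the quotient probability spaces via Definition \ref{def-deflawlaw} and Lemma \ref{lem-lemlaw33}, build the transfer map $\varphi$ on initial pairs $(Y_0,Z_0)$, check it is measure-preserving, and conclude via uniqueness of the flow of \eqref{eqonZ}. The paper's proof is in fact terser than yours and omits several of the verifications you spell out.

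One comment on the ``mild circularity'' you flag at the end: there is none, and no induction in time is needed. Once $\varphi$ is measure-preserving and $Y_1'=Y_2'\circ\varphi$, the pullback $W:=Z_2'\circ\varphi$ automatically satisfies \eqref{eqonZ} on $(\Omega_1',P_1')$ with the \emph{same} nonlinear coefficients as $Z_1'$, because $\E_{P_1'}(|W|^2)=\E_{P_2'}(|Z_2'|^2)$ and $\E_{P_1'}(\overline{Y_1'}W)=\E_{P_2'}(\overline{Y_2'}Z_2')$ are direct consequences of the change-of-variables formula for $\varphi$. Thus $W$ and $Z_1'$ solve the identical equation with identical data, and the global uniqueness in Proposition \ref{prop-lwp}/\ref{prop-gwp} gives $W=Z_1'$ in one stroke; you never need the a priori equality $\E(|X_1(t)|^2)=\E(|X_2(t)|^2)$ as a separate input.
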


\begin{proof}Thanks to Lemma \ref{lem-lemlaw33} and using the same notations, we can consider the random variables $X'_1$ and $X'_2$ instead of $X_1$ and $X_2$. Let $\varphi$ be the map from $\Omega_1'$ to $\Omega_2'$ defined as 
$$
\varphi((Y_{0,1},Z_{0,1})^{-1}(\{(u_0,u_1)\}) )= (Y_{0,2},Z_{0,2})^{-1}(\{(u_0,u_1)\})
$$
for all $u_1 \in H^1(M)$ and $u_0$ in the image of $Y_f(t=0)$. 

By construction, $Z_{1,0}'  = Z_{2,0}' \circ\varphi$ and $P'_2$  is the image measure of $P_1'$ under $\varphi$. 

By uniqueness of the flow of \eqref{eqonZ}, $Z'_2(t) \circ \varphi = Z'_1(t)$ and since $\varphi$ preserves the measure the law of $X'_2(t)$ is the same has the one of $X'_1(t)$. Therefore, thanks to Lemma \ref{lem-lemlaw33}, $X_1(t)$ and $X_2(t)$ have the same law.
\end{proof}

\subsubsection{Gaussian variables}

In this subsection, we prove that if $X_0$ is a Gaussian variable, then so is $X(t)$ the solution of \eqref{eqonrv} with initial datum $X_0$. What is more, we prove that if $\gamma$ is a solution of \eqref{eqonop} then there exists a Gaussian variable with covariance $\gamma$ that is a solution of \eqref{eqonrv}.

\begin{lemma}\label{lem-law22} Let $X_0$ be a Gaussian process of covariance $\gamma_0$ such that there exists a square root $\gamma_0^{1/2}$ of $\gamma$ satisfying $\textrm{Tr}((\gamma_0^{1/2}-\gamma_f^{1/2})^*(1-\lap)(\gamma_0^{1/2}-\gamma_f^{1/2})) < \infty$. Let $X(t)$ be the solution of \eqref{eqonrv} with initial datum $X_0$ then $X(t)$ is a Gaussian process.
\end{lemma}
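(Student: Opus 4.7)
The plan is to mirror the strategy of Lemma~\ref{lem-law2}, the main novelty being that $X_0$ is not globally $H^1$ but is a Gaussian perturbation of the non-localised equilibrium $Y_f(0)$. First I would realise $X_0$ concretely: using the square root $\gamma_0^{1/2}$ provided by the hypothesis, write $X_0 = \gamma_0^{1/2} W$ on a probability space supporting a complex Gaussian white noise $W$, and set $Y_0 = \gamma_f^{1/2} W$ on the same space. Then $Y_0$ has the law of $Y_f(0)$ while $Z_0 := X_0 - Y_0 = (\gamma_0^{1/2}-\gamma_f^{1/2}) W$ lies in $L^2(\Omega, H^1(M))$ with norm squared equal to $\textrm{Tr}\big((\gamma_0^{1/2}-\gamma_f^{1/2})^*(1-\lap)(\gamma_0^{1/2}-\gamma_f^{1/2})\big) < \infty$ by hypothesis. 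Applying Proposition~\ref{prop-gwp} to \eqref{eqonZ} with initial datum $Z_0$ yields a global solution $Z \in \mathcal{C}(\mathbb{R}, L^2(\Omega, H^1(M)))$, and setting $Y(t) = e^{-it(m-\lap)} Y_0$, the sum $X(t) = Y(t) + Z(t)$ solves \eqref{eqonrv}.

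The key observation, as in Lemma~\ref{lem-law2}, is that $\varphi(t,x) := \E(|X(t,x)|^2) = m + 2\Re\E(\overline{Y} Z) + \E(|Z|^2)$ is a \emph{deterministic} function of $(t,x)$; the two terms beyond $m$ are localised and bounded on compact time intervals by the $L^2(\Omega, H^1)$ norm of $Z$ via Sobolev embeddings. For almost every $\omega$, the path $X(\cdot,\omega)$ satisfies the linear Schrödinger equation $i\partial_t u = (-\lap + \varphi) u$ with initial datum $X_0(\omega)$. Let $U(t)$ denote the flow of this equation, constructed as a perturbation of $e^{-it(m-\lap)}$ by the bounded potential $\varphi - m$ via the usual Duhamel formula. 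This $U(t)$ is a continuous linear operator on $L^2_{\textrm{loc}}(M)$, and by pathwise uniqueness for linear Schrödinger equations with bounded potential one obtains $X(t, \omega) = U(t) X_0(\omega)$ almost surely.

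To conclude, for every test function $\lambda \in C_c^\infty(M)$ the scalar $\an{U^*(t)\lambda, X_0}$ is a continuous linear functional of the Gaussian field $X_0$ and hence is a complex Gaussian random variable, so
$$
\E\big(e^{i\,\Re\an{\lambda, X(t)}}\big) = \E\big(e^{i\,\Re\an{U^*(t)\lambda, X_0}}\big) = \exp\big(-\tfrac14 \an{U^*(t)\lambda, \gamma_0 U^*(t)\lambda}\big),
$$
which is the characteristic functional of a Gaussian process with covariance $U(t)\gamma_0 U^*(t)$.

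The main obstacle will be making rigorous sense of $U(t)$ as a continuous linear flow on a space containing both the non-localised equilibrium $Y_0$ and the $H^1$ perturbation $Z_0$, together with the pathwise identification $X(t) = U(t) X_0$. A perhaps more transparent route would be to linearise explicitly: writing $a(t,x) = \E(|Z|^2) + 2\Re\E(\overline{Y} Z)$ and using the Duhamel formulation of \eqref{eqonZ}, one expresses $Z(t)$ as a deterministic linear combination of $Z_0$ and of $Y_0$ (the latter entering through the forcing term $a(t)Y(t)$), so that $X(t)$ is a deterministic continuous linear image of the jointly Gaussian pair $(Y_0, Z_0)$ and hence is itself Gaussian.
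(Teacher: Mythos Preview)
Your proposal is correct in substance and your alternative route (the last paragraph) is precisely what the paper does, though the paper formalises it at the operator level rather than invoking the Duhamel formula abstractly. Concretely, the paper writes $W_0=\int e^{ikx}\,dW(k)$, sets $Q_0=\gamma_0^{1/2}-\gamma_f^{1/2}$, and verifies (via the spectral decomposition of $Q_0^*(1-\lap)Q_0$) that $Z_0=Q_0W_0\in L^2(\Omega,H^1)$. It then derives an equation for the \emph{operator} $Q(t)$ under the ansatz $X(t)=(Q(t)+e^{-it(m-\lap)}\gamma_f^{1/2})W_0$, namely
\[
i\partial_t V = e^{it(m-\lap)}(\varphi-m)e^{-it(m-\lap)}\bigl(V+\gamma_f^{1/2}\bigr),\qquad V=e^{it(m-\lap)}Q,
\]
shows this is locally well-posed in $\mathcal L(H^1)$ (here the boundedness of $f$ is used so that $\gamma_f^{1/2}\in\mathcal L(H^1)$), and then identifies $Q(t)W_0$ with $Z(t)$ by uniqueness for \eqref{eqonZ}. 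The Gaussianity of $X(t)$ is then immediate, with covariance $(Q(t)+e^{-it(m-\lap)}\gamma_f^{1/2})^2$.

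Compared with your primary route via a flow $U(t)$ on a space large enough to contain $Y_0$, the paper's operator formulation sidesteps exactly the obstacle you flagged: one never has to make sense of $U(t)$ on non-localised functions, only of bounded operators on $H^1$ acting on white noise. Your alternative route is the same idea stated more loosely. One small correction: $\varphi-m$ is \emph{not} bounded in general (there is no $H^1\hookrightarrow L^\infty$ in $d=3$); the paper records instead that $\varphi\in m+\mathcal C(\R,L^2)+\mathcal C(\R,L^1)$, which is what the multiplication by $\varphi-m$ in the operator equation actually uses.
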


\begin{proof} Write $\varphi(t,x) = \E(|X(t,x)|^2)$. We have $\varphi \in m + \mathcal C(\R,L^2(\R^d)) + \mathcal C(\R,L^1(\R^d))$. Write $Q_0 = \gamma_0^{1/2}-\gamma_f^{1/2}$ and $W_0 = \int e^{ikx} dW(k)$. We have 
$$
\textrm{Tr }(Q_0^*(1-\lap)Q_0) <\infty.
$$
Hence $Q_0^* (1-\lap) Q_0$ can be diagonalised into $Q_0^*(1-\lap) Q_0 = \sum_n \alpha_n |u_n \times u_n|$ with $u_n$ orthonormal in $L^2$, $\alpha_n\geq 0$ and $\sum_n \alpha_n =  \textrm{Tr }(Q_0^*(1-\lap)Q_0) <\infty$. After some manipulations of the expression, we have that
$$
\|Q_0W_0\|_{L^2(\Omega, H^1(\R^d))}^2 = \int \int |(1-\lap)^{1/2} Q_0 e^{ikx}|^2 dx dk.
$$
By using the decomposition of $Q_0^* (1-\lap) Q_0$ we get
$$
\|QW_0\|_{L^2(\Omega, H^1(\R^d))}^2 = \int \sum_n \alpha_n |\hat u_n (k) |^2 = \sum_n \alpha_n <\infty
$$
where $\hat u_n$ is the Fourier transform of $u_n$. Hence $Z_0 = Q_0W_0$ belongs to $L^2(\Omega, H^1(\R^d))$.

We derive an equation on $Q$ assuming that $X$ is equal to $X(t) = (Q(t) + e^{-it(m-\lap)} \gamma_f^{1/2} )W_0 = Q(t) W_0 + Y_f$. We get 
$$
i\partial_t Q = (\varphi - \lap) Q + (\varphi - m ) e^{-it(m-\lap)} \gamma_f^{1/2} .
$$
Finally, write $V = e^{it(m-\lap) }Q$, we get
$$
i\partial_t V = e^{it(m-\lap) }( \varphi -m) e^{-it(m-\lap) } (V + \gamma_f^{1/2})
$$
This equation is at least locally well-posed in $\mathcal L (H^1)$ for instance (this is why we require $f$ bounded) and the solution satisfies
$$
e^{-it(m-\lap)} V W_0 \in L^2(\Omega, H^1(\R^d)).
$$

By uniqueness of the solution of \eqref{eqonZ} we get that $X(t) = (Q(t) + e^{-it(m-\lap)} \gamma_f^{1/2} )W_0$ first locally in time and then globally. This ensures that $X(t)$ is a Gaussian variable (of covariance $(Q(t) + e^{-it(m-\lap)} \gamma_f^{1/2} )^2$).

\end{proof}

\begin{definition} Let $\Sigma_f$ be the set of non negative operators $\gamma_0$ such that there exists a square root of $\gamma_0$, $\gamma_0^{1/2}$ such that $\textrm{Tr}((\gamma_0^{1/2}-\gamma_f^{1/2})^*(1-\lap)(\gamma_0^{1/2}-\gamma_f^{1/2}))$ is finite. Let $d_f$ be the distance on this set defined as 
$$
d_f(\gamma_1,\gamma_2) = d_2(\nu_1,\nu_2)
$$
where $d_2$ is the Wasserstein distance defined in Remark \ref{rem-wass} and $\nu_i$ is the law of the Gaussian process with covariance $\gamma_i$. \end{definition}

\begin{remark} This distance is well-defined. Let $W_0 = \int e^{ikx}dW(k)$ where $W$ is defined as in the definition of $Y_f$. We have that $X_i = \gamma_i^{1/2} W_0$ is a Gaussian variable with covariance operator $\gamma_i$. Hence
$$
d_f(\gamma_1,\gamma_2) \leq \|X_1-X_2\|_{L^2(\Omega,H^1(M))}
$$
and $X_1 - X_2 = Z_1 - Z_2$ with $Z_i = (\gamma_i^{1/2} - \gamma_f^{1/2})W_0 \in L^2(\Omega, H^1(M))$.
\end{remark}

\begin{lemma}\label{lem-gau11} Assume that $\gamma \in \mathcal C(\R,\Sigma_f)$ is a solution to \eqref{eqonop}. Then there exists a probability space $\Omega$ and a Gaussian variable $X \in Y+\mathcal C(\R,L^2(\Omega,H^1(M)))$ solution of \eqref{eqonrv} and of covariance $\gamma$. \end{lemma}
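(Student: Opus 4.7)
The plan is to adapt the argument of Lemma \ref{lem-gau1} to the perturbed setting, with the additional task of controlling the decomposition $X = Y_f + Z$ in $L^2(\Omega, H^1(M))$. First, I freeze $\varphi(t,x) = \rho_{\gamma(t)}(x)$, viewed as a given time-dependent real potential coming from the operator solution. On a probability space carrying the complex white noise $W$ used to define $Y_f$, let $W_0 = \int e^{ikx} dW(k)$ and take as initial datum $X_0 = \gamma_0^{1/2} W_0$, where $\gamma_0^{1/2}$ is the square root supplied by the hypothesis $\gamma_0 = \gamma(0) \in \Sigma_f$. Then $X_0$ is a centred Gaussian field with covariance $\gamma_0$, and by the same trace identity as in Lemma \ref{lem-law22}, $X_0 - Y_f(0) = (\gamma_0^{1/2} - \gamma_f^{1/2}) W_0 \in L^2(\Omega, H^1(M))$.

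Next, let $U_\varphi(t)$ be the flow of the deterministic linear equation $i\partial_t u = (-\lap + \varphi(t,\cdot)) u$, which is unitary on $L^2(M)$, and define $X(t) = U_\varphi(t) X_0$. Since $U_\varphi(t)$ is deterministic and linear, $X(t)$ remains a Gaussian field at every time, with covariance $\gamma_X(t) = U_\varphi(t) \gamma_0 U_\varphi(t)^*$. This $\gamma_X$ solves the linear operator equation $i\partial_t \gamma_X = [-\lap + \varphi, \gamma_X]$ with initial datum $\gamma_0$. But $\gamma$ solves the same linear equation with the same initial datum, since $\rho_\gamma = \varphi$ is now a frozen datum; by uniqueness for this linear operator equation (which follows from the invertibility of $U_\varphi$, exactly as in Lemma \ref{lem-gau1}), $\gamma_X = \gamma$. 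In particular $\rho_{\gamma_X} = \varphi = \E(|X|^2)$, so $X$ actually satisfies the nonlinear equation \eqref{eqonrv}.

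The last task is to prove $Z := X - Y_f$ belongs to $\mathcal C(\R, L^2(\Omega, H^1(M)))$. Since $X(t) = (U_\varphi(t)\gamma_0^{1/2}) W_0$ and $Y_f(t) = (e^{-it(m-\lap)} \gamma_f^{1/2}) W_0$ are built from the same noise, one may write $Z(t) = Q(t) W_0$ with the deterministic operator $Q(t) := U_\varphi(t) \gamma_0^{1/2} - e^{-it(m-\lap)} \gamma_f^{1/2}$; differentiating in time gives the inhomogeneous linear equation $i\partial_t Q = (-\lap + \varphi) Q + (\varphi - m) e^{-it(m-\lap)} \gamma_f^{1/2}$ with $Q(0) = \gamma_0^{1/2} - \gamma_f^{1/2}$. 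Following Lemma \ref{lem-law22}, I would set $V = e^{it(m-\lap)} Q$ and analyse the resulting equation in the space of operators with $\textrm{Tr}((\cdot)^*(1-\lap)(\cdot)) < \infty$; once $\textrm{Tr}(Q(t)^*(1-\lap)Q(t)) < \infty$ is established at each $t$, the identity $\|Q(t) W_0\|_{L^2(\Omega, H^1(M))}^2 = \textrm{Tr}(Q(t)^*(1-\lap) Q(t))$ used in Lemma \ref{lem-law22} immediately gives $Z(t) \in L^2(\Omega, H^1(M))$, with continuity in $t$ coming from the continuous dependence of $V$ on time.

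The hard part will be producing this global trace-class bound on $Q$, i.e.\ controlling the inhomogeneous flow for $V$ in the norm $\textrm{Tr}((\cdot)^*(1-\lap)(\cdot))^{1/2}$. This reduces to showing that the source $(\varphi - m) e^{-it(m-\lap)} \gamma_f^{1/2}$ has the required operator regularity, which should follow from the localisation of $\varphi - m = \rho_{\gamma - \gamma_f}$ inherited from the assumption $\gamma(t) \in \Sigma_f$, together with the boundedness of $f$, exactly as in the local analysis already carried out in Lemma \ref{lem-law22}.
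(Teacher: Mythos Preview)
Your proposal is correct and follows essentially the same approach as the paper: freeze $\varphi=\rho_\gamma$, propagate a Gaussian initial datum by the linear flow $U_\varphi$, identify the resulting covariance with $\gamma$ via uniqueness for $i\partial_t\gamma_X=[-\lap+\varphi,\gamma_X]$, and then invoke the operator analysis of Lemma~\ref{lem-law22} for the decomposition $X=Y_f+QW_0$. The only nuance is that the paper phrases the uniqueness step in $\gamma_f+\mathcal L(H^1)$ (subtracting two solutions and conjugating the \emph{difference} by $U_\varphi$), which is the clean way to justify the conjugation argument when $\gamma$ itself is not in $\mathcal L(H^1)$; your appeal ``exactly as in Lemma~\ref{lem-gau1}'' should be read in that sense.
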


\begin{proof} Let $\varphi = \rho_{\gamma}$ and let $X$ be the solution to 
$$
i\partial_t X = (-\lap + \varphi) X
$$
with initial datum $X_0$ a Gaussian variable with covariance $\gamma_0$. Its covariance $\gamma_X$ is the unique solution in $ \gamma_f + \mathcal L (H^1))$ to the linear equation 
\begin{equation}\label{eqonoplin}
i\partial_t \gamma_X = [-\lap + \varphi, \gamma_X]
\end{equation}
with initial datum $\gamma_{X_0} = \gamma(t=0)$.

Indeed, if an operator $\tilde \gamma$ belongs to $\Sigma_f$ then it also belongs to $\gamma_f + \mathcal L(H^1)$.

What is more, if $\gamma_1$ and $\gamma_2$ are two solutions of \eqref{eqonoplin} in $\gamma_f + \mathcal L(H^1)$ with the same initial datum then $\gamma_1 - \gamma_2$ is a solution to \eqref{eqonoplin} in $\mathcal L(H^1)$ with initial datum $0$.

Let $U(t)$ the flow of the linear equation in $H^1$ on $u$ : $i\partial_t u = (-\lap + \varphi) u$. The map $U$ is an invertible. Hence, since,
$$
i\partial_t(U(t)^* (\gamma_1 - \gamma_2) U(t)) = 0
$$
the equation $i\partial_t \gamma_X = [-\lap + \varphi, \gamma_X]$ has a unique solution. 

Since $\gamma$ is also a solution to $i\partial_t \gamma_X = [-\lap + \varphi, \gamma_X]$ with initial datum $\gamma(t=0)$ we get that $\gamma_X = \gamma$, thus $\E(|X|^2) = \rho_\gamma $, which ensures that $X$ is a solution to \eqref{eqonrv} and thanks to \ref{lem-law22}, a Gaussian solution to \eqref{eqonrv}.
\end{proof}

\subsubsection{Global well-posedness}

\begin{corollary}[of Proposition \ref{prop-gwp}] \label{cor-gwp}Let $\Psi_f$ be the map from $\Sigma_f$ to $\mathcal C( \R, \Sigma_f)$ such that $\Psi_f(t) (\gamma_0) = \gamma_{Y_f(t) + Z(t)}$ where $Z(t)$ is the solution to \eqref{eqonZ} with initial datum $Z_0$ the Gaussian random process with covariance operator $(\gamma_0^{1/2} - \gamma_f^{1/2})^2$. The map $\Psi_f$ is well-defined, it defines a solution to \eqref{eqonop} and it is continuous for the distance $d_f$. Besides, $\Psi_f(t)\gamma_0$ is the unique solution to \eqref{eqonop} with initial datum $\gamma_0$.
\end{corollary}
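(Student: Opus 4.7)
The plan is to mirror the proof of Corollary \ref{cor-gwpS}, replacing $L^2(\Omega,H^1(M))$-valued data by perturbations of the equilibrium $Y_f$ and using the perturbed equation \eqref{eqonZ} together with Proposition \ref{prop-gwp} in place of Proposition \ref{prop-gwpS}.

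First I would establish well-definedness of $\Psi_f$. Given $\gamma_0 \in \Sigma_f$, fix a square root $\gamma_0^{1/2}$ with $Q_0 = \gamma_0^{1/2} - \gamma_f^{1/2}$ satisfying the trace condition; as in the proof of Lemma \ref{lem-law22}, $Z_0 := Q_0 W_0$ is then a centred Gaussian in $L^2(\Omega,H^1(M))$ with covariance $(\gamma_0^{1/2}-\gamma_f^{1/2})^2$. Proposition \ref{prop-gwp} produces a global solution $Z \in \mathcal C(\R, L^2(\Omega,H^1(M)))$ of \eqref{eqonZ} with initial datum $Z_0$, and $X(t) = Y_f(t) + Z(t)$ solves \eqref{eqonrv}. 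The computation behind Proposition \ref{XtoG} then shows that $\gamma_{X(t)}$ satisfies \eqref{eqonop}. Independence of the construction on the choice of probability space and of the particular Gaussian representative of $\gamma_0$ follows from Lemma \ref{lem-lawlaw}: two such choices give initial data of the same law, hence solutions with equal law at every $t$, hence the same covariance operator.

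Next, I would verify that $\gamma(t) := \Psi_f(t)\gamma_0$ actually lies in $\Sigma_f$. By Lemma \ref{lem-law22}, $X(t)$ remains Gaussian and its proof produces a natural square root $R(t) = Q(t) + e^{-it(m-\lap)}\gamma_f^{1/2}$ of $\gamma(t)$; the task is to check that $R(t) - \gamma_f^{1/2}$ satisfies the trace bound. This reduces to controlling $Q(t)$ via the equation for $V = e^{it(m-\lap)}Q$ derived in Lemma \ref{lem-law22}, together with the free-flow piece $(e^{-it(m-\lap)} - 1)\gamma_f^{1/2}$, handled via the Hilbert--Schmidt-type hypothesis $\an k f \in L^2$. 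This preservation-of-regularity step is where I expect the main technical obstacle, since the free Schr\"odinger flow does not act nicely on the multiplier $\gamma_f^{1/2}$ in the $(1-\lap)$-weighted norm, and one has to combine the cancellations at the level of $V$ with a careful decomposition of the remainder.

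For continuity in time, the bound $d_f(\gamma(t_1),\gamma(t_2)) \leq \|X(t_1) - X(t_2)\|_{L^2(\Omega,H^1(M))}$ combined with continuity of $Z$ in $L^2(\Omega,H^1(M))$ and of $Y_f$ in law gives the result. For continuity in the initial datum, given $\gamma_0^1,\gamma_0^2 \in \Sigma_f$, I would choose square roots realising the infimum defining $d_f(\gamma_0^1,\gamma_0^2)$ up to $\varepsilon$, build the associated Gaussian data $Z_0^1, Z_0^2$ on a common probability space supporting the same $W_0$, and use the continuity-in-initial-datum part of Proposition \ref{prop-gwp} before letting $\varepsilon \to 0$. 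Uniqueness is then handled exactly as in the sphere/torus case: if $\gamma_1, \gamma_2 \in \mathcal C(\R, \Sigma_f)$ are two solutions of \eqref{eqonop} with $\gamma_1(0) = \gamma_2(0) = \gamma_0$, Lemma \ref{lem-gau11} exhibits them as covariances of Gaussian solutions $X_1, X_2$ of \eqref{eqonrv}; the $X_i(0)$ share a law, Lemma \ref{lem-lawlaw} propagates this equality in law to all $t$, and therefore $\gamma_1 = \gamma_2$.
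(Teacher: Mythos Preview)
Your proposal follows the same architecture as the paper's proof: build $Z_0=(\gamma_0^{1/2}-\gamma_f^{1/2})W_0$, invoke Proposition~\ref{prop-gwp} for global existence of $Z$, pass to $\gamma$ via Proposition~\ref{XtoG}, use Lemma~\ref{lem-lawlaw} for well-definedness, bound $d_f$ by $\|Z_1-Z_2\|_{L^2(\Omega,H^1)}$ for continuity, and conclude uniqueness from Lemma~\ref{lem-gau11} together with Lemma~\ref{lem-lawlaw}. The paper's proof is in fact terser than yours and omits the verification that $\gamma(t)\in\Sigma_f$.

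Your one misstep is overestimating the difficulty of that verification. You propose to control $R(t)-\gamma_f^{1/2}=Q(t)+(e^{-it(m-\lap)}-1)\gamma_f^{1/2}$ directly, and you flag the second term as the ``main technical obstacle''. But the definition of $\Sigma_f$ only asks for \emph{some} square root, and square roots in the sense $AA^*=\gamma$ are only determined up to a right unitary. Since $\gamma_f^{1/2}$ is a Fourier multiplier and hence commutes with $e^{-it(m-\lap)}$, the operator
\[
\tilde R(t):=R(t)\,e^{it(m-\lap)}=Q(t)e^{it(m-\lap)}+\gamma_f^{1/2}
\]
is also a square root of $\gamma(t)$, and now $\tilde R(t)-\gamma_f^{1/2}=Q(t)e^{it(m-\lap)}$. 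By cyclicity of the trace and the fact that $e^{it(m-\lap)}$ commutes with $(1-\lap)$,
\[
\textrm{Tr}\big((\tilde R(t)-\gamma_f^{1/2})^*(1-\lap)(\tilde R(t)-\gamma_f^{1/2})\big)=\textrm{Tr}\big(Q(t)^*(1-\lap)Q(t)\big)=\|Z(t)\|_{L^2(\Omega,H^1)}^2<\infty,
\]
using the identity from the proof of Lemma~\ref{lem-law22}. So no ``careful decomposition of the remainder'' is needed; the free-flow piece disappears once you pick the right gauge for the square root.
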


\begin{proof} We take $Z_0 =  (\gamma_0^{1/2}-\gamma_f) W_0$. It belongs to $L^2(\Omega, H^1(M))$ and $X_0 = Y_f(t=0) + Z_0$. Thanks to Proposition \ref{prop-gwp} we get a solution $X$ of \eqref{eqonrv}, and thanks to Proposition \ref{XtoG}, we get a solution $\gamma$ to the equation \eqref{eqonop}. Hence $\Psi_f(t)$ is well-defined. It is continuous in time and in the initial datum for the following reason : the distance between $\gamma_{X_1(t)}$ and $\gamma_{X_2(t)}$ is controlled by the norm of $X_1(t) - X_2(t)$, which is controlled by the norm of $Z_1(t)-Z_2(t)$. The continuity of the solution $Z(t)$ in both time and initial datum gives the result. 

For the uniqueness of the solution, let $\gamma_1$ and $\gamma_2$ be two solutions of \eqref{eqonop} with the initial datum $\gamma_0$. For $i=1,2$, there exists $X_i(t)$ a solution of \eqref{eqonrv} which is a Gaussian variable of covariance $\gamma_i$. For $i = 1,2$, $X_i(t=0)$ is a Gaussian variable of covariance $\gamma_0$. Hence $X_1(t=0)$ and $X_2(t=0)$ have the same law. Therefore $X_1(t)$ and $X_2(t)$ too, which ensures that $\gamma_1 = \gamma_2$ and hence the uniqueness of the solution of \eqref{eqonop}.
\end{proof}

\begin{remark} One could rewrite the corollary \ref{cor-gwp} as : the equation \eqref{eqonQ} is globally well-posed in $\Sigma_f$.
\end{remark}

\subsection{On the focusing case}

We consider the equation 
\begin{equation}\label{eqonopfoc}
i\partial_t \gamma = [-\lap - \rho_\gamma,\gamma]
\end{equation}
on $\R^d$, $d=2,3$.

\begin{corollary}[of Proposition \ref{prop-blowup}]\label{cor-blowup} If $\gamma_0$ is such that $\textrm{Tr}((1-\lap) \gamma_0) < \infty$, $\int_{\R^d} |x|^2 \rho_{\gamma_0}(x)dx < \infty$ and 
$$
\frac12 \textrm{Tr}((1-\lap) \gamma_0) - \frac14\int_{\R^d} \rho_{\gamma_0}(x)^2 <0
$$
then there is at least one solution of \eqref{eqonopfoc} that exists locally in time and blows up at finite time in the sense that there exists $T$ such that
$$
\textrm{Tr}((1-\lap) \gamma(t)) \rightarrow \infty
$$
when $t\rightarrow T$.
\end{corollary}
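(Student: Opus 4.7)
The strategy is to lift the blow-up phenomenon from the random-variable equation \eqref{eqonrvfoc} to the operator equation \eqref{eqonopfoc} via the Gaussian correspondence established in Propositions \ref{XtoG} and \ref{prop-CI}.

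First, I would invoke Proposition \ref{prop-CI}: the hypothesis $\textrm{Tr}((1-\lap)\gamma_0) < \infty$ produces a Gaussian random field $X_0 \in L^2(\Omega, H^1(\R^d))$ with covariance $\gamma_0$. For such an $X_0$, the remaining hypotheses of Proposition \ref{prop-blowup} follow by Fubini and the Gaussian moment identities: $V(0) = \int_{\R^d} |x|^2 \rho_{\gamma_0}(x)\,dx < \infty$, and
$$
\mE(X_0) = \tfrac12 \E \an{X_0,(1-\lap)X_0} - \tfrac14 \int \E(|X_0|^2)^2 = \tfrac12 \textrm{Tr}((1-\lap)\gamma_0) - \tfrac14 \int \rho_{\gamma_0}^2 < 0.
$$
Proposition \ref{prop-blowup} then produces a solution $X(t)$ of \eqref{eqonrvfoc} defined on a maximal interval $[0, T)$ with $T < \infty$. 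A standard iteration of the local well-posedness theory in $L^2(\Omega, H^1(\R^d))$ for the focusing equation (identical to Proposition \ref{prop-lwp} once the sign of the cubic term is flipped) yields the blow-up alternative $\|X(t)\|_{L^2(\Omega, H^1)} \to \infty$ as $t \to T$.

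Next, I would show that $X(t)$ remains a Gaussian process on $[0, T)$. Following the proof of Lemma \ref{lem-law2}, set $\varphi(t,x) = \E(|X(t,x)|^2)$ and let $U(t)$ denote the linear flow of $i\partial_t u = (-\lap - \varphi(t,x)) u$. On each compact subinterval of $[0, T)$ the operator $U(t)$ is bounded and invertible on $H^1(\R^d)$, and by uniqueness of \eqref{eqonrvfoc} one has $X(t) = U(t) X_0$; since $X_0$ is Gaussian, so is $X(t)$, with covariance $U(t)\gamma_0 U(t)^*$.

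Define $\gamma(t) = \E(|X(t) \times X(t)|)$. The computation of Proposition \ref{XtoG}, with the sign of the nonlinearity reversed, shows that $\gamma$ solves \eqref{eqonopfoc} with $\gamma(0) = \gamma_0$. By the trace identity from the proof of Proposition \ref{prop-CI},
$$
\textrm{Tr}((1-\lap)\gamma(t)) = \E \an{X(t), (1-\lap) X(t)} = \|X(t)\|_{L^2(\Omega, H^1)}^2,
$$
which diverges as $t \to T$. The main obstacle I anticipate lies in the blow-up alternative at the random-variable level: Proposition \ref{prop-blowup} only asserts finite-time blow-up without naming the quantity that diverges, so one must first confirm via the local theory that the maximal time of existence is characterised by $\|X(t)\|_{L^2(\Omega, H^1)} \to \infty$. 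Once that point is secured, the operator-level blow-up follows immediately from the trace identity above.
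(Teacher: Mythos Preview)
Your proposal is correct and follows the paper's approach exactly: take $X_0$ Gaussian with covariance $\gamma_0$, translate the three operator-level hypotheses into the hypotheses of Proposition \ref{prop-blowup} via the identities $\textrm{Tr}((1-\lap)\gamma_0) = \|X_0\|_{L^2(\Omega,H^1)}^2$, $\int |x|^2\rho_{\gamma_0} = V(0)$, and $\tfrac12\textrm{Tr}((1-\lap)\gamma_0) - \tfrac14\int\rho_{\gamma_0}^2 = \mE(X_0)$, then read off the trace blow-up from $\textrm{Tr}((1-\lap)\gamma(t)) = \|X(t)\|_{L^2(\Omega,H^1)}^2$. One minor remark: your intermediate step verifying that $X(t)$ remains Gaussian is unnecessary here, since Proposition \ref{XtoG} (with the sign of the nonlinearity reversed) and the trace identity both hold for arbitrary solutions of \eqref{eqonrvfoc}, Gaussian or not; the paper's proof omits it.
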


\begin{proof} This is due to the fact that with $X_0$ the Gaussian random field of covariance $\gamma_0$ and $X(t)$ the solution of \eqref{eqonrvfoc}, we have
$$
\textrm{Tr}((1-\lap) \gamma(t)) = \|X(t)\|_{L^2(\Omega, H^1(\R^d))}
$$
and
$$
\int_{\R^d} |x|^2 \rho_{\gamma_0}(x)dx = V(t=0)
$$
and
$$
\frac12 \textrm{Tr}((1-\lap) \gamma_0) - \frac14\int_{\R^d} \rho_{\gamma_0}(x)^2 = \mE(X_0).
$$
\end{proof}

\begin{remark} One could rewrite the corollary \ref{cor-gwp} as : there exist blow-up solutions to the equation \eqref{eqonopfoc}. \end{remark}

\section{Acknowledgements}
The author would like to thank Nikolay Tzvetkov for numerous and valuable discussions on the subject, which helped both with the presentation and contents of this paper.

\providecommand{\bysame}{\leavevmode\hbox to3em{\hrulefill}\thinspace}
\providecommand{\MR}{\relax\ifhmode\unskip\space\fi MR }
\providecommand{\MRhref}[2]{%
  \href{http://www.ams.org/mathscinet-getitem?mr=#1}{#2}
}
\providecommand{\href}[2]{#2}

\end{document}